 \newtheorem{thm}{Theorem}[section]
 \newtheorem{cor}[thm]{Corollary}
 \newtheorem{lem}[thm]{Lemma}
 \newtheorem{prop}[thm]{Proposition}
 \newtheorem{cond}[thm]{Condition}
 \theoremstyle{definition}
 \newtheorem{defn}[thm]{Definition}
 \theoremstyle{remark}
 \numberwithin{equation}{section}
 \newcommand{\R}{{\mathbb{R}}}
 \newcommand{\N}{{\mathbb{N}}}
\newcommand{\calA}{\mathcal{A}_{\alpha}}
\title[Intrinsic ultracontractivity for Schr{\"o}dinger semigroups]
 {Intrinsic ultracontractivity for Schr{\"o}dinger semigroups based on cylindrical fractional Laplacian on the plane}
\author{Tadeusz Kulczycki, Kinga Sztonyk}
\thanks{T. Kulczycki and K. Sztonyk were supported in part by the National Science Centre, Poland, grant no. 2019/35/B/ST1/01633}
\begin{document}

\abstract
We study Schr{\"o}dinger operators on $\R^2$
$$
H = \left(-\frac{\partial^2}{\partial x_1^2}\right)^{\alpha/2} + \left(-\frac{\partial^2}{\partial x_2^2}\right)^{\alpha/2} + V,
$$
for $\alpha \in (0,2)$ and some sufficiently regular, radial, confining potentials $V$. 
We obtain necessary and sufficient conditions on intrinsic ultracontractivity for semigroups $\{e^{-tH}: \, t \ge 0\}$. We also get sharp estimates of first eigenfunctions of $H$.
\endabstract

\makeatletter
\def\@setkeywords{\@ifempty{\keywords}{}
  {\footnotesize\textbf{Keywords: }\@keywords \\}}
\makeatother

\keywords{Schr{\"o}dinger semigroup, intrinsic ultracontractivity, fractional Laplacian, stable process}

\maketitle

\section{Introduction}
The following anisotropic, singular L{\'e}vy operator in $\R^{3N}$
$$
L^{(rel)} = -\sum_{k = 1}^N 
\sqrt{-\frac{\partial^2}{\partial x_{3k-2}^2} -\frac{\partial^2}{\partial x_{3k-1}^2} -\frac{\partial^2}{\partial x_{3k}^2} + m^2} + Nm,
$$
where $m > 0$, $N \in \N$, plays an important role in some models of relativistic quantum mechanics. For example, in Lieb and Seiringer's book \cite[see formula (3.2.6)]{LS2007} it is a kinetic term in the relativistic Schr{\"o}dinger Hamiltonian for $N$ electrons. It appears also in the relativistic spinless Salpeter equation for a two-body system, see e.g. \cite[formula (12)]{S2019}, and in the model of a relativistic gravitating system of $N$ neutral particles, see e.g. \cite[formula (3.2.11)]{LS2007}.

In our paper we study Schr{\"o}dinger operators based on the following anisotropic, singular L{\'e}vy operator in $\R^{2}$, which is called the cylindrical fractional Laplacian
$$
L^{(\alpha)} = -\left(-\frac{\partial^2}{\partial x_1^2}\right)^{\alpha/2} - \left(-\frac{\partial^2}{\partial x_2^2}\right)^{\alpha/2},
$$
where $\alpha \in (0,2)$. For $\alpha = 1$ this operator may be viewed as a simplified version of $L^{(rel)}$. We believe that methods from our paper will be useful in investigating Schr{\"o}dinger operators based on $L^{(rel)}$.

Schr{\"o}dinger operators considered in this paper have the following form 
$$
H f(x) = - L^{(\alpha)} f(x) + V(x) f(x), \quad x \in \R^2,
$$ 
where potentials $V$ satisfy the following assumptions.

\textbf{Assumptions (A).} Let $V: \R^2 \to [0,\infty)$ be a radial function, which has a profile function $q: [0,\infty) \to [0,\infty)$ (that is $q(|x|) = V(x)$, for any $x \in \R^2$) satisfying the following conditions.

a) $\displaystyle \lim_{x \to \infty} q(x) = \infty$,

b) $q$ is nondecreasing on $[0,\infty)$,

c) $q$ is continuous on $[0,\infty)$,

d) there exists $C_0 \ge 1$ such that for every $x \in [0,\infty)$
$$
q(x + 1) \le C_0 (q(x) + 1).
$$

Typical examples of potentials $V$ satisfying assumptions (A) are $V(x) = |x|^{\beta}$, $V(x) = \exp(\beta |x|)$ for $\beta > 0$. On the other hand, functions $V(x) = \exp(|x|^{\beta})$ for $\beta > 1$ fail to satisfy condition d).

The Schr{\"o}dinger operator $H$ generates a semigroup of symmetric operators $\{e^{-tH}: \, t \ge 0\}$ on $L^2(\R^2)$, such that $e^{-tH}: \, L^2(\R^2) \to L^{\infty}(\R^2)$ for $t > 0$. It is well known that $T_t := e^{-tH}$ is an integral operator, whose kernel is given by the function $u_t(x,y)$, i.e. $T_t f(x) = \int_{\R^2} u_t(x,y) f(y) \, dy$, $f \in L^2(\R^2)$, $x \in \R^2$, $t > 0$ \cite[Chapter 2.B]{DC2000}. For any $t > 0$ the kernel $u_t(x,y)$ is continuous, bounded and positive on $\R^2 \times \R^2$ (the proof of this property is similar to the proof of Lemma 3.1 in \cite{KS2006}). Each $T_t$ for $t > 0$ is a compact operator in $L^2(\R^2)$ (see e.g. \cite{KK2010}, Lemmas 1 and 9 for a general argument) and the spectrum of $H$ consists of a sequence of eigenvalues $\{\lambda_n\}_{n = 1}^{\infty}$ satisfying
$$
0 < \lambda_1 < \lambda_2 \le \lambda_3 \le \ldots, \quad \lim_{n \to \infty} \lambda_n = \infty.
$$
The corresponding eigenfunctions $\{\varphi_n\}_{n = 1}^{\infty}$ may be chosen so that they form an orthonormal basis in $L^2(\R^2)$, all $\varphi_n$ are continuous and bounded on $\R^2$ and the first eigenfunction $\varphi_1$ is strictly positive on $\R^2$. Clearly, we have $T_t \varphi_n(x) = e^{-\lambda_n t} \varphi_n(x)$ for any $n \in \N$, $t > 0$, $x \in \R^2$.

It is well known that the Schr{\"o}dinger semigroup $\{T_t: \, t \ge 0\}$ has the following probabilistic Feynman–Kac representation. For any $f \in L^2(\R^2)$, $t > 0$ the following equality holds (in the sense of $L^2(\R^2)$)
\begin{equation}
\label{FeynmanKac}
T_t f(x) = E^x\left(\exp\left(-\int_0^t V(X_s) \, ds\right) f(X_t)\right),
\end{equation}
which allows us to use probabilistic methods. 

The most important result of this paper deals with intrinsic ultracontractivity (IU) for the semigroup $\{T_t: \, t \ge 0\}$ and estimates of the first eigenfunction $\varphi_1$. IU was introduced by E. B. Davies and B. Simon in \cite{DS1984}. The semigroup $\{T_t:\,  t \ge 0\}$ is called intrinsically ultracontractive if and only if for any $t > 0$ there exist positive constants $c_{1,t}, c_{2,t}$ such that for all $x,y \in \R^2$ we have
$$
c_{1,t} \varphi_1(x) \varphi_1(y) \le u_t(x,y) \le c_{2,t} \varphi_1(x) \varphi_1(y).
$$
The main result of this paper is the following theorem (recall that $q$ is a profile function for $V$).

\begin{thm}\label{main}
	Assume that $V$ satisfies assumptions (A). Then $\{T_t: \, t \ge 0\}$  is intrinsically ultracontractive if and only if
	\begin{equation}\label{warunek_logarytmu}
	\lim_{x \to \infty} \frac{q(x)}{\ln{x}} = \infty.
	\end{equation}
	Moreover, if \eqref{warunek_logarytmu} holds, then there exist constants $c_1 = c_1(\alpha, q) > 0$ and $c_2 = c_2(\alpha, q) > 0$, such that for every $x = (x_1,x_2) \in \mathbb{R}^2$
	\begin{equation*}
	\begin{multlined}[t] \frac{c_1}{(|x_1|+1)^{1+\alpha} (|x_2|+1)^{1+\alpha} (q(\min(|x_1|, |x_2|)) + 1) (q(|x|) + 1)} \leq \varphi_1 (x) \\
	\leq \frac{c_2}{(|x_1| + 1)^{1+\alpha} (|x_2| + 1)^{1+\alpha} (q(\min(|x_1|, |x_2|)) + 1) (q(|x|) + 1)}. \end{multlined}
	\end{equation*}
\end{thm}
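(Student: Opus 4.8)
I would work with the L\'evy process $X_t=(X^1_t,X^2_t)$ generated by $L^{(\alpha)}$, where $X^1$ and $X^2$ are independent one-dimensional symmetric $\alpha$-stable processes. Two features are decisive: the L\'evy measure of $X$ is carried by the two coordinate axes, so jumps occur one coordinate at a time; and the transition density factorizes, $p_t(x,y)=p^1_t(x_1,y_1)p^2_t(x_2,y_2)$ with $p^i_t(a,b)\asymp t\,(t^{1/\alpha}+|a-b|)^{-1-\alpha}$. By \eqref{FeynmanKac}, $u_t(x,y)=p_t(x,y)\,\E^x_y[\exp(-\int_0^tV(X_s)\,ds)]$ with $\E^x_y$ the bridge expectation. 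The plan is: first establish matching two-sided bounds for $u_t(x,y)$ for $0<t\le t_0$; then deduce the bounds on $\varphi_1$ from $\varphi_1(x)=e^{\lambda_1}\E^x[\exp(-\int_0^1V(X_s)\,ds)\,\varphi_1(X_1)]$; and finally read off the dichotomy by comparing the $u_t$-bounds with $\varphi_1(x)\varphi_1(y)$, using the semigroup property to pass from $t\le t_0$ to arbitrary $t$.

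For the lower bound on $\varphi_1$ (and on $u_t$), fix $x$ with, say, $|x_1|\le|x_2|$, so $|x|$ is comparable to $|x_2|$. Starting from $x$ I would prescribe the event on which: (i) on a time interval of length $t_1$ comparable to $(q(|x|)+1)^{-1}$, the coordinate $X^2$ makes a single jump from $x_2$ into $B(0,1)$ while $X^1$ stays in $B(x_1,1)$; (ii) on a following interval of length $t_2$ comparable to $(q(|x_1|)+1)^{-1}$, $X^1$ jumps into $B(0,1)$ while $X^2$ stays in $B(0,2)$; (iii) the process remains in $B(0,2)$ for a bounded remaining time. On this event assumption (A) gives $\int_0^{t_1+t_2}V(X_s)\,ds\le C(t_1q(|x|)+t_2q(|x_1|))\le C'$, so the Feynman--Kac weight is bounded below; the probability of the event is at least a constant multiple of $(t_1|x_2|^{-1-\alpha})(t_2|x_1|^{-1-\alpha})$ by the transition-density bound and standard ball-confinement estimates for stable processes; and $\varphi_1$ is bounded below on $B(0,2)$. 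This produces the stated lower bound. The structural point --- and the source of the two distinct factors $q(|x|)$ and $q(\min(|x_1|,|x_2|))$ --- is that, because jumps act on one coordinate at a time, one is forced to bring the \emph{larger} coordinate home first, while the process still sits where $V\approx q(|x|)$. Running the same construction but stopping inside a fixed ball also gives $u_t(x,y)\ge c_{1,t}\varphi_1(x)\varphi_1(y)$ for every $t>0$.

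The main obstacle is the \emph{upper} bound on $\varphi_1$. I would first bound $\E^x[\exp(-\int_0^tV(X_s)\,ds);\,X_t\in B(0,1)]$ by a first-exit decomposition: with $\tau$ the first exit time of $X$ from $B(x,1)$, on $\{\tau\ge t\}$ the weight is at most $e^{-tq(|x|-1)}$, while on $\{\tau<t\}$ one uses the strong Markov property at $\tau$ together with the fact that $X_\tau$ and $X_{\tau-}$ differ in a single coordinate (so a displacement of order $|x_i|$ costs a factor of order $|x_i|^{-1-\alpha}$, whereas a short jump leaves the process far out and forces a recursion), and integrates $\int_0^te^{-sq(|x|-1)}\,\P(\tau\in ds)$, of order $(q(|x|)+1)^{-1}$, against the cost of the subsequent descent, which supplies the remaining factor $(q(\min(|x_1|,|x_2|))+1)^{-1}|x_1|^{-1-\alpha}|x_2|^{-1-\alpha}$; together with the analogous upper bound for $u_t(x,y)$, $0<t\le t_0$, this is fed into $\varphi_1(x)=e^{\lambda_1}\E^x[\exp(-\int_0^1V(X_s)\,ds)\,\varphi_1(X_1)]$ and split over the location of $X_1$. (An alternative, analytic route is to integrate the eigenequation $(V-\lambda_1)\varphi_1=-L^{(\alpha)}\varphi_1$ in the variable $x_1$ to obtain $\int_\R\varphi_1(y_1,x_2)\,dy_1\le C\,(q(|x_2|)+1)^{-1}|x_2|^{-1-\alpha}$, and then reinsert this into the eigenequation; the axis-supported L\'evy measure is what makes the iteration close.)

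For the dichotomy: if \eqref{warunek_logarytmu} holds, fix $t>0$; by the semigroup property the inequality $u_t(x,y)\le c_{2,t}\varphi_1(x)\varphi_1(y)$ reduces, via the $u_s$- and $\varphi_1$-estimates, to the case of $x$ and $y$ both large and close to each other, where the bridge stays outside $B(0,|x|/2)$, so $\E^x_y[\exp(-\int_0^tV)]\le C e^{-ctq(|x|)}$ and $p_t(x,y)\le C t^{-2/\alpha}$, while $\varphi_1(x)\varphi_1(y)$ decays only polynomially in $|x|$ up to powers of $q$; condition \eqref{warunek_logarytmu} is exactly what forces $e^{-ctq(|x|)}$ to beat every power of $|x|$ for $|x|$ large, compactness settling the rest. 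With the lower bound this is IU. Conversely, if \eqref{warunek_logarytmu} fails, pick $a_n\to\infty$ with $q(a_n)\le M\ln a_n$ and set $x^{(n)}=(a_n,0)$; confining the bridge to $B(x^{(n)},1)$ gives $u_t(x^{(n)},x^{(n)})\ge c\,t^{-2/\alpha}e^{-Ctq(a_n)}\ge c\,t^{-2/\alpha}a_n^{-CMt}$, whereas $\varphi_1(x^{(n)})^2$ is comparable to $a_n^{-2-2\alpha}(q(a_n)+1)^{-2}$; choosing $t$ small enough that $CMt<2+2\alpha$ forces $u_t(x^{(n)},x^{(n)})/\big(\varphi_1(x^{(n)})\varphi_1(x^{(n)})\big)\to\infty$, so IU cannot hold.
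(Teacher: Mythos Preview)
Your lower-bound strategy for $\varphi_1$ is essentially the paper's: force the larger coordinate to jump home first (on a time scale $\sim(q(|x|)+1)^{-1}$), then the smaller one (on a time scale $\sim(q(\min(|x_1|,|x_2|))+1)^{-1}$), then stay near the origin. This is exactly the content of Lemmas~\ref{P_z_dolu}--\ref{lemat_z_dolu}, and your identification of the mechanism behind the two distinct $q$-factors is correct.

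There are two genuine gaps. First, your treatment of the upper bound is too schematic to count as a strategy. You write that a short jump ``leaves the process far out and forces a recursion'', but controlling that recursion is the entire content of Section~4 (a dozen lemmas). After the first exit of a unit box around $x$, the process lands at some $(x_1',x_2')$ with one coordinate unchanged up to $O(1)$ and the other arbitrary; summing over all such landing points, and then over all sequences of landing points until the process enters a fixed large box $\widetilde R_N$, is delicate because the individual jump costs $|n_i-k_i|^{-1-\alpha}$ are only just summable. The paper makes this converge by (i) extracting a factor $(q(|p|-3))^{-1}$ at \emph{every} intermediate square $R_p$, (ii) choosing $N$ so large that $\sup_{p\notin\widetilde R_N}(q(|p|-3))^{-1}$ beats the combinatorial constants, and (iii) tracking, via the events $W(n,m,k,l,t)$, the square closest to the origin ever visited --- which is precisely what produces the second factor $(q(\min(|x_1|,|x_2|))+1)^{-1}$ in the upper bound. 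None of this is visible in your sketch, and your ``alternative analytic route'' of integrating the eigenequation in $x_1$ does not obviously close either, since $L^{(\alpha)}$ is nonlocal in $x_1$ and the integral $\int_{\R}\varphi_1(y_1,x_2)\,dy_1$ does not annihilate $(-\partial_{x_1}^2)^{\alpha/2}\varphi_1$.

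Second, your argument that failure of \eqref{warunek_logarytmu} precludes IU is circular as written: you invoke $\varphi_1(x^{(n)})^2\asymp a_n^{-2-2\alpha}(q(a_n)+1)^{-2}$, but the upper half of that comparison is the very estimate whose proof (Lemma~\ref{lemat_z_gory}) \emph{requires} \eqref{warunek_logarytmu}. The paper avoids this by working not with $\varphi_1$ but with the elementary bound $T_t\mathds 1_D(a_ne_1)\le ct\,a_n^{-1-\alpha}$ (Lemma~\ref{TtD}), which needs no hypothesis on $q$, and then comparing with $T_t\mathds 1_{B(a_ne_1,1)}(a_ne_1)\ge c(t)a_n^{-tC_0M}$ to violate Condition~\ref{warunek_IU2} for small $t$. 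Your argument can be repaired along similar lines (e.g.\ if IU held, the lower IU inequality would give $\varphi_1(a_ne_1)\le c\,u_1(a_ne_1,0)\le c'\,a_n^{-1-\alpha}$ directly), but you need to say so rather than appeal to the two-sided $\varphi_1$ bound.
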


IU has been introduced in \cite{DS1984} for very general semigroups. Important examples of such semigroups are those generated by Dirichlet Laplacians on domains in $\R^d$ and by Schr{\"o}dinger operators $H = -\Delta + V$ both on $\R^d$ as well as on domains of $\R^d$ (with Dirichlet boundary conditions). IU for such classical semigroups has been studied in e.g. \cite{DS1984, D1989, B1991, D1991}. In the recent years IU and ground state estimates have also been studied for Schr{\"o}dinger semigroups generated by $H = -L +V$ on $\R^d$, where $V$ is a confining potential in $\R^d$ and $L$ is a generator of a symmetric L{\'e}vy process in $\R^d$, which L{\'e}vy measure has a density (with respect to the Lebesgue measure on $\R^d$) satisfying some regularity conditions, see e.g. \cite{KL2015, CW2016, KS2020}.

The cylindrical fractional Laplacian $L^{(\alpha)}$, which we consider in our paper, is the generator of the process $X = (X^{(1)},X^{(2)})$ in $\R^2$, which components $X^{(1)}$, $X^{(2)}$ are independent, one-dimensional, symmetric, standard $\alpha$-stable processes. Note that $L^{(\alpha)}$ is a singular L{\'e}vy operator, because the L{\'e}vy measure of the corresponding process $X$ is singular with respect to the Lebesgue measure on $\R^2$ (it is supported by coordinate axes). The study of Schr{\"o}dinger operators based on such singular operator requires new methods. In particular in Section 4 we use some new probabilistic techniques in order to estimate $T_t \mathds{1}_{\R^2}(x)$.

The main motivation of our research comes from an attempt to thoroughly analyze Schr{\"o}dinger operators $H = -L^{(rel)} + V$, which appear in some models of relativistic physics. However, another motivation comes from mathematics. We are interested in understanding Schr{\"o}dinger operators $H = -L + V$, when $L$ is a singular, nonlocal L{\'e}vy operator, that is $L$ is a generator of a jump L{\'e}vy process with a singular L{\'e}vy measure. Typical examples of such processes are $X = (X^{(1)}, \ldots, X^{(d)})$, which components $X^{(1)}, \ldots, X^{(d)}$ are independent, one-dimensional jump processes. Such processes $X$ appear naturally in many models. In recent years singular, nonlocal L{\'e}vy operators have been intensively studied both by probabilists and specialists in PDEs, see e.g. \cite{BKS2017, BS2018, C2019, CK2020, CHZ2023, KKK2022, KW2022, KKS2021, ROV2016}.
An interesting open problem is the following.

\vskip 5pt

{\bf Open question.} What are the first eigenfunction estimates and conditions for IU for Schr{\"o}dinger semigroups corresponding to operators $H = -L +V$, when $L$ is a general symmetric stable operator in $\R^d$ (not necessarily isotropic)?

\vskip 5pt

Such general symmetric stable operators have been studied in e.g. \cite{ROS2016, Dz1991, GH1993}.

In our paper we use some probabilistic ideas from \cite{KS2006}, however, there are essential differences between our paper and \cite{KS2006} due to singularity of the L{\'e}vy measure of the process $X$. We also borrow some arguments from \cite{BK2003} (see Lemma 4.5 in \cite{BK2003}). 

The techniques used in the paper are quite complicated therefore we concentrate on the case of dimension $d = 2$. Our goal is to present the idea, not to achieve maximal generality with respect to dimension. We do not want the technicalities to obscure the approach we propose.

The paper is organized as follows. In Section \ref{Preliminaries} we introduce notation and collect known facts needed in the sequel. In Section 3 we obtain lower bound estimates of $T_t \mathds{1}_{D}(x)$. The most difficult and important is Section 4, where we prove an upper bound of $T_t \mathds{1}_{\R^2}(x)$. In Section 5 we use the results obtained in two previous sections to prove the main result of our paper -- Theorem \ref{main}.

\section{Preliminaries}
\label{Preliminaries}
In the whole paper we assume that $\alpha \in (0,2)$ and a potential $V$ satisfies Assumptions (A). We write $c = c(a, b, \dots)$ to indicate the dependence of a constant $c$ on parameters. Many constants depend on $\alpha$ and the potential $V$, so we usually omit dependence on $\alpha$ and $V$ in the notation. In this paper, the constants written with capital letters ($\calA$, $C_0$, $C_1$, $C_2$, \dots) do not change their value. On the other hand, constants written with lowercase letters ($c$, $c_1$, $c_2$, \dots) may change their value from one use to the next. We assume that all constants are positive. We denote $\mathbb{N} = \{1, 2, 3, \ldots\}$.

Let $X = (X^{(1)},X^{(2)})$ be a cylindrical $\alpha$-stable process in $\R^2$. Its components $X^{(1)}$, $X^{(2)}$ are independent, one-dimensional, symmetric, standard $\alpha$-stable processes. The characteristic function of $X_t^{(1)}$ is given by $E^{0} e^{i X_t^{(1)} \xi} = e^{-t |\xi|^{\alpha}}$, where $t > 0$, $\xi \in \R$. The transition density of $X^{(1)}$ is denoted by $\tilde{p}(t,x,y)$, $t > 0$, $x, y \in \R$. The transition density of $X$ is denoted by $p(t,x,y)$, $t > 0$, $x, y \in \R^2$. Clearly, we have $p(t,x,y) = \tilde{p}(t,x_1,y_1) \tilde{p}(t,x_2,y_2)$ for any $t > 0$, $x = (x_1,x_2) \in \R^2$, $y = (y_1,y_2) \in \R^2$. By $\tau_D$ we denote the first exit time for an open set $D \subset \R^2$, i.e. $\tau_D = \inf\{t \ge 0: \, X_t \notin D\}$. By $p_D(t,x,y)$ we denote the density of the process $X$ started at $x$ and killed on exiting an open set $D \subset \R^2$
$$
p_D(t,x,y) = p(t,x,y) - E^x(\tau_D \le t, p(t - \tau_D, X(\tau_D),y)),
$$
for $x ,y \in D$. Let $a, b, c, d \in \R$ with $a < b$, $c < d$. For $D = (a,b) \times (c,d)$, $t > 0$, $x = (x_1,x_2) \in \R^2$, $y = (y_1,y_2) \in \R^2$ we have
\begin{equation}
\label{rectangle}
p_D(t,x,y) = \tilde{p}_{(a,b)}(t,x_1,y_1)  \tilde{p}_{(c,d)}(t,x_2,y_2),
\end{equation}
where $\tilde{p}_{U}(t,\cdot,\cdot)$ is the transition density of the process $X^{(1)}$ killed on exiting an open set $U \subset \R$.

It is well known that the L{\'e}vy measure of the process $X$ is given by
$$
\mu(dz_1 \, dz_2) = \delta_0(dz_1) \times \frac{\calA}{|z_2|^{1+\alpha}} dz_2 + \frac{\calA}{|z_1|^{1+\alpha}} dz_1 \times \delta_0(dz_2),
$$
where $\calA = \alpha 2^{\alpha - 1} \Gamma((1+\alpha)/2) \pi^{-1/2} (\Gamma(1-\alpha/2))^{-1}$ and $\delta_x$ denotes the Dirac measure on $\R$ for $x \in \R$.

For any $y = (y_1,y_2) \in \R^2$ we denote 
$$
\mu_y(dz_1 \, dz_2) = \delta_{y_1}(dz_1) \times \frac{\calA}{|z_2 - y_2|^{1+\alpha}} dz_2 + \frac{\calA}{|z_1 - y_1|^{1+\alpha}} dz_1 \times \delta_{y_2}(dz_2).
$$
Now we state a generalization of the Ikeda-Watanabe formula (expressing the distribution of $(\tau_D, X(\tau_D))$) for the process $X$ (cf. \cite{IW1962}). 

\begin{prop}
Assume that $D$ is open, nonempty, bounded subset of $\R^2$ and $A$ is a Borel set such that $A \subset D^c \setminus \partial D$. Then for any $x \in D$, $0 \le t_1 < t_2 \le \infty$ we have
\begin{equation}
\label{IW_formula}
P^x(X(\tau_D) \in A, t_1 < \tau_D < t_2) = \int_D \int_{t_1}^{t_2} p_D(s,x,y) \, ds  \mu_y(A) \, dy.
\end{equation}
\end{prop}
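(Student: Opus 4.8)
The plan is to deduce \eqref{IW_formula} from the Lévy system of $X$. Since $X$ is a Lévy process in $\R^2$ with Lévy measure $\mu$, the random jump measure $\sum_{s>0}\mathds{1}_{\{X_s\neq X_{s-}\}}\,\delta_{(s,X_s)}$ has predictable compensator $ds\,\mu_{X_{s-}}(dw)$; equivalently, for every nonnegative predictable process $(H_s)_{s>0}$ and every nonnegative Borel function $f$ on $\R^2$,
\begin{equation}\label{prop-levysystem}
E^x\!\left(\sum_{s>0}H_s\,\mathds{1}_{\{X_s\neq X_{s-}\}}\,f(X_s)\right)=E^x\!\left(\int_0^\infty H_s\,\mu_{X_s}(f)\,ds\right),
\end{equation}
where $\mu_y(f)=\int_{\R^2}f(w)\,\mu_y(dw)$ and we used that $\mu_{X_{s-}}=\mu_{X_s}$ for $ds$-almost every $s$. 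This is classical for Lévy processes and is unaffected by the fact that $\mu$ is singular with respect to Lebesgue measure; it may be cited from \cite{IW1962} or obtained from the Lévy--Itô decomposition of $X$.

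Next I would apply \eqref{prop-levysystem} with $f=\mathds{1}_A$ and with the integrand $H_s=\mathds{1}_{\{t_1<s<t_2\}}\,\mathds{1}_{\{s\le\tau_D\}}$, which is predictable since $\{s\le\tau_D\}=\bigcap_{u<s}\{X_u\in D\}$ is $\mathcal{F}_{s-}$-measurable and $s\mapsto\mathds{1}_{\{s\le\tau_D\}}$ is left-continuous. On $\{s\le\tau_D\}$ one has either $s<\tau_D$, whence $X_s\in D$ and $\mathds{1}_A(X_s)=0$ because $A\subset D^c\setminus\partial D=(\overline D)^{\,c}$, or else $s=\tau_D$; hence the left-hand sum collapses to the single term $s=\tau_D$, equal to $\mathds{1}_{\{t_1<\tau_D<t_2\}}\mathds{1}_{\{X_{\tau_D}\neq X_{\tau_D-}\}}\mathds{1}_{\{X_{\tau_D}\in A\}}$. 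Since $X_{\tau_D-}=\lim_{u\uparrow\tau_D}X_u\in\overline D$ while $X_{\tau_D}\in A$ forces $X_{\tau_D}\notin\overline D$, the factor $\mathds{1}_{\{X_{\tau_D}\neq X_{\tau_D-}\}}$ is automatically $1$ on $\{X_{\tau_D}\in A\}$ --- this is exactly where $A\cap\partial D=\emptyset$ rules out a boundary (``creeping'') contribution. Thus the left side of \eqref{prop-levysystem} equals $P^x(X(\tau_D)\in A,\ t_1<\tau_D<t_2)$, with $\tau_D<\infty$ $P^x$-a.s. because $D$ is bounded. On the right side, replacing $\mathds{1}_{\{s\le\tau_D\}}$ by $\mathds{1}_{\{s<\tau_D\}}$ (they differ on a $ds$-null set) gives $E^x\!\int_{t_1}^{t_2}\mathds{1}_{\{s<\tau_D\}}\,\mu_{X_s}(A)\,ds$; by Tonelli's theorem and the defining property $P^x(X_s\in dy,\ s<\tau_D)=p_D(s,x,y)\,dy$ this becomes $\int_{t_1}^{t_2}\!\int_D p_D(s,x,y)\,\mu_y(A)\,dy\,ds$, and one more application of Tonelli puts the integrals in the order written in \eqref{IW_formula}.

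I expect the write-up to be short once the Lévy system is in hand. The only genuinely delicate points are, first, the passage from ``$X$ jumps out of $D$ into $A$ at some time $s\in(t_1,t_2)$'' to ``$s=\tau_D$ and $X(\tau_D)\in A$'', including the harmless treatment of the single instant $s=\tau_D$ inside the $ds$-integrals --- here the hypothesis $A\subset D^c\setminus\partial D$ does all the work by forcing an honest jump and killing any boundary term --- and second, the predictability of $H_s$ and the localization of \eqref{prop-levysystem} at $\tau_D$, both of which are routine for a Lévy process. Boundedness of $D$ is used only to guarantee $\tau_D<\infty$ almost surely (for instance, $D$ lies in a bounded box $I_1\times I_2$ and each coordinate process $X^{(i)}$ exits $I_i$ in finite time a.s.), so that $X(\tau_D)$ is well defined; no structural difficulty beyond bookkeeping is anticipated.
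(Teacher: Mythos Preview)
Your argument via the L\'evy system is correct and is exactly the standard route for Ikeda--Watanabe type formulas; the paper itself omits the proof and simply refers to \cite[Proposition~2.5]{KS2006}, whose argument is of the same kind. The delicate points you flag---predictability of $H_s$, the reduction of the jump sum to the single time $s=\tau_D$ using $A\subset(\overline D)^c$, and the harmless $ds$-null difference between $\{s\le\tau_D\}$ and $\{s<\tau_D\}$---are handled correctly.
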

The proof is almost the same as the proof of \cite[Proposition 2.5]{KS2006} and it is omitted. Let $a, b, c, d \in \R$ with $a < b$, $c < d$. When $D = (a,b) \times (c,d)$ and $x = (x_1,x_2) \in D$ then 
$$
P^x(X(\tau_D) \in \partial D) \le P^{x_1}(X^{(1)}(\tilde{\tau}_{(a,b)}^{(1)}) \in \{a,b\}) + P^{x_2}(X^{(2)}(\tilde{\tau}_{(c,d)}^{(2)}) \in \{c,d\}) = 0, 
$$
where 
$\tilde{\tau}_{(a,b)}^{(1)} = \inf\{t \ge 0: \, X_t^{(1)} \notin (a,b)\}$ and 
$\tilde{\tau}_{(c,d)}^{(2)} = \inf\{t \ge 0: \, X_t^{(2)} \notin (c,d)\}$.

We will study IU of the semigroup $\{T_t: \, t \ge 0\}$ checking the following conditions.

\begin{cond}\label{warunek_IU}
	There exists an open, bounded and nonempty set $D$, such that for every $t>0$ there exists a constant $c(t, D) > 0$, such that for every $x \in \mathbb{R}^2$
	\begin{equation*}
	T_t \mathds{1}_{\mathbb{R}^2} (x) \leq c(t, D) T_t \mathds{1}_D (x).
	\end{equation*}
\end{cond}

\begin{cond}\label{warunek_IU2}
	For any open, bounded and nonempty set $D$ and for any $t>0$ there is a constant $c(t,D)>0$ such that for any $x\in\mathbb{R}^2$
	\begin{equation*}
	T_t \mathds{1}_{B(x,1)}(x)\leq c(t,D) T_t \mathds{1}_D(x).
	\end{equation*}
\end{cond}

It is known (see e.g. arguments from \cite[Section 3]{KS2006}) that Condition \ref{warunek_IU} implies IU of $\{T_t: \, t \ge 0\}$ and that IU of $\{T_t: \, t \ge 0\}$ implies Condition \ref{warunek_IU2}. 

Now we present some estimates, which will be needed in the sequel. 

\begin{lem}
\label{series}
	For any $r > 0$ we have
	\begin{eqnarray*}
	\sum_{n=1}^{\infty} \frac{e^{-\frac{r}{n}}}{n(n+1)} &\geq& \frac{e^{-1}}{r+1},\\
	\sum_{n = 1}^{\infty} \frac{e^{-\frac{r}{n+1}}}{n(n+1)} &\leq& \frac{5}{r}.
	\end{eqnarray*}
\end{lem}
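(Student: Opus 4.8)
The plan is to prove the two inequalities separately by comparing the series with integrals, using the monotonicity properties of the summand in $n$ over suitable ranges.

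For the lower bound, the idea is to keep only a single, well-chosen term of the series. Since every term is positive, for any $N \in \N$ we have $\sum_{n=1}^\infty \frac{e^{-r/n}}{n(n+1)} \ge \frac{e^{-r/N}}{N(N+1)}$. I would choose $N = N(r)$ to be the smallest integer with $N \ge r$, i.e. $N = \lceil \max(r,1) \rceil$ (so that $N \ge 1$ and $N \ge r$, hence $e^{-r/N} \ge e^{-1}$), while also $N \le r+1$ when $r\ge 1$ and $N=1\le r+1$ when $r<1$; in all cases $N(N+1) \le (r+1)(r+2) \le$ something comparable to $(r+1)$ up to treating the constant carefully. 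Actually the cleanest route: take $N=\lceil r\rceil$ if $r\ge 1$ and $N=1$ if $r<1$. Then $e^{-r/N}\ge e^{-1}$ and $N\le r+1$, $N+1\le r+2\le 2(r+1)$ — wait, this gives $e^{-1}/(2(r+1)^2)$, not quite the claimed bound. The sharper approach is to sum over a block of indices near $r$ rather than a single term, or to note that $\frac{1}{N(N+1)}=\frac1N-\frac1{N+1}$ and compare $\sum_{n\ge 1} e^{-r/n}\big(\frac1n-\frac1{n+1}\big)$ from below by an integral $\int_1^\infty e^{-r/x}\,\frac{dx}{x(x+1)}$, using monotonicity of $x\mapsto e^{-r/x}$ (increasing) against $x\mapsto \frac1{x(x+1)}$ (decreasing) — here I would instead bound $\sum_{n=1}^\infty e^{-r/n}\left(\frac1n-\frac1{n+1}\right)\ge \int_1^\infty e^{-r/(x+1)}\,\frac{dx}{x(x+1)}$ or a similar shifted comparison, then substitute $u=r/(x+1)$ to reduce to an explicit integral dominating $e^{-1}/(r+1)$. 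I would pin down the exact shift so that the constant $e^{-1}$ comes out clean.

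For the upper bound $\sum_{n=1}^\infty \frac{e^{-r/(n+1)}}{n(n+1)} \le \frac5r$, the plan is again an integral comparison. Write $\frac{1}{n(n+1)} = \frac1n - \frac{1}{n+1}$ and observe that the function $g(x) = e^{-r/(x+1)}\big(\frac1x-\frac1{x+1}\big) = \frac{e^{-r/(x+1)}}{x(x+1)}$ is nonincreasing on $[1,\infty)$ (both factors are: $e^{-r/(x+1)}$ is increasing but $\frac1{x(x+1)}$ decreases fast enough — so I would instead split off the first term or two and bound the tail $\sum_{n\ge 2}$ by $\int_1^\infty \frac{e^{-r/(x+1)}}{x(x+1)}\,dx$ where the integrand, after checking, is monotone). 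Substituting $u = r/(x+1)$ turns $\int \frac{e^{-r/(x+1)}}{x(x+1)}\,dx$ into $\frac1r\int_0^{r/2} e^{-u}\,\frac{du}{1 - u/r}$ or a similarly explicit form; bounding $\frac{1}{1-u/r}$ appropriately on the relevant range and using $\int_0^\infty e^{-u}\,du = 1$ yields a bound of the form $\frac{c}{r}$, and I would track constants to land at $\frac5r$ (the first term contributes $\frac{e^{-r/2}}{2}\le \frac{1}{er}\cdot(\text{something})$, absorbed into the $5$).

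The main obstacle I anticipate is purely a matter of bookkeeping: getting the precise constants $e^{-1}$ and $5$ rather than just "comparable" constants. This forces care in (i) the exact choice of the distinguished index / integration range in the lower bound, and (ii) the exact splitting of initial terms and the estimate of $\frac1{1-u/r}$ in the upper bound. There is no conceptual difficulty — it is elementary calculus — but one must not be sloppy with the monotonicity direction of $e^{-r/n}$ versus $e^{-r/(n+1)}$, which is exactly why the two sums in the statement use $n$ and $n+1$ in the exponent respectively (the shift makes the term summable-comparable in the right direction in each case).
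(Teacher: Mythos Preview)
The paper does not actually prove this lemma: immediately after the statement it writes ``The above result follows from Lemmas 5.2 and 5.6 from \cite{KS2006}'' and moves on. So there is no argument in the paper to compare against, only a citation.

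Your plan is viable and, once the meandering is pruned, quite short. For the lower bound your ``block of indices'' idea is exactly right and you should commit to it: for every $n\ge \lceil r\rceil$ one has $r/n\le 1$, hence $e^{-r/n}\ge e^{-1}$, and since $\sum_{n\ge M}\frac{1}{n(n+1)}=\frac1M$ by telescoping,
\[
\sum_{n=1}^{\infty}\frac{e^{-r/n}}{n(n+1)}\ \ge\ e^{-1}\sum_{n\ge \lceil r\rceil}\frac{1}{n(n+1)}\ =\ \frac{e^{-1}}{\lceil r\rceil}\ \ge\ \frac{e^{-1}}{r+1}.
\]
The single-term attempt you started with cannot reach the stated constant, as you noticed; drop it from the write-up.

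For the upper bound your integral comparison works cleanly once you use the identity $\frac{1}{n(n+1)}=\int_n^{n+1}\frac{dx}{x^2}$ together with the fact that $e^{-r/(n+1)}\le e^{-r/(x+1)}$ for $x\in[n,n+1]$. This gives
\[
\sum_{n=1}^{\infty}\frac{e^{-r/(n+1)}}{n(n+1)}\ \le\ \int_1^{\infty}\frac{e^{-r/(x+1)}}{x^2}\,dx
\ \le\ 4\int_2^{\infty}\frac{e^{-r/y}}{y^2}\,dy\ =\ \frac{4}{r}\bigl(1-e^{-r/2}\bigr)\ \le\ \frac{4}{r},
\]
where the middle inequality uses $(y-1)^2\ge y^2/4$ for $y\ge 2$ after the shift $y=x+1$, and the last equality comes from the substitution $u=r/y$. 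No splitting off initial terms is needed, and your worry about the monotonicity of $\frac{e^{-r/(x+1)}}{x(x+1)}$ is irrelevant once you separate the two factors as above.
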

The above result follows from Lemmas 5.2 and 5.6 from \cite{KS2006}.

\begin{lem}\label{auxiliary_series}
	
	(i) Let $n, k \in \mathbb{Z}$ and $\mathcal{P}' = \mathbb{Z} \setminus \{k, n\}$. Then
	\begin{equation*}
	\sum_{p \in \mathcal{P}'} \frac{1}{|n-p|^{1+\alpha}} \frac{1}{|p-k|^{1+\alpha}} \leq \frac{C_1}{(|n-k|+1)^{1+\alpha}}.
	\end{equation*}
	
	(ii) Let $n, k \in \mathbb{Z}$ and $\mathcal{P}'' = \mathbb{Z} \setminus \{k\}$. Then
	\begin{equation*}
	\sum_{p \in \mathcal{P}''} \frac{1}{(|n-p|+1)^{1+\alpha}}   \frac{1}{|p-k|^{1+\alpha}} \leq \frac{C_2}{(|n-k| + 1)^{1+\alpha}}.
	\end{equation*}
\end{lem}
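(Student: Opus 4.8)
\emph{Proof proposal.} Both inequalities are the discrete analogues of the classical convolution estimate $\int_{\R}|x-z|^{-(1+\alpha)}|z-y|^{-(1+\alpha)}\,dz\le c\,|x-y|^{-(1+\alpha)}$, which is available here precisely because $1+\alpha>1$ and hence $\sigma:=\sum_{j=1}^{\infty}j^{-(1+\alpha)}<\infty$. The plan is to prove (i) by an elementary splitting of the index set and then to deduce (ii) from (i).

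For (i), I would first observe that the substitutions $p\mapsto k+n-p$ and $p\mapsto -p$ leave the left-hand side unchanged, so that it depends only on $m:=|n-k|$; thus it suffices to treat $k=0$ and $n=m\ge 0$. If $m=0$, the sum equals $\sum_{p\ne 0}|p|^{-2(1+\alpha)}<\infty$, which is of the required form since $(m+1)^{1+\alpha}=1$. If $m\ge 1$, I would decompose $\mathbb{Z}\setminus\{0,m\}$ into the four sets $\{p<0\}$, $\{p>m\}$, $\{0<p\le m/2\}$ and $\{m/2<p<m\}$. On $\{p<0\}$ one has $|m-p|\ge m\ge (m+1)/2$, so that factor is $\le 2^{1+\alpha}(m+1)^{-(1+\alpha)}$, while $\sum_{p<0}|p|^{-(1+\alpha)}=\sigma$; the set $\{p>m\}$ is handled identically using $|p|\ge m+1$. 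On $\{0<p\le m/2\}$ one has $|m-p|\ge m/2\ge (m+1)/4$, so that factor is $\le 4^{1+\alpha}(m+1)^{-(1+\alpha)}$, while $\sum_{0<p\le m/2}p^{-(1+\alpha)}\le\sigma$; the remaining set $\{m/2<p<m\}$ maps onto a subset of the previous one under $p\mapsto m-p$, with the two factors exchanged. Adding the four contributions gives a bound of the form $C_1(m+1)^{-(1+\alpha)}$ with $C_1$ depending only on $\alpha$ (for instance one may take $C_1=\max\{\sum_{p\ne 0}|p|^{-2(1+\alpha)},\,(2\cdot 2^{1+\alpha}+2\cdot 4^{1+\alpha})\sigma\}$).

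For (ii), I would reduce it to (i). Since $|n-p|+1\ge|n-p|$, every term of the sum in (ii) with $p\in\mathcal{P}''$ and $p\ne n$ is dominated by the corresponding term of the sum in (i) over $\mathbb{Z}\setminus\{k,n\}=\mathcal{P}''\setminus\{n\}$, whence
$$\sum_{\substack{p\in\mathcal{P}'',\ p\ne n}}\frac{1}{(|n-p|+1)^{1+\alpha}}\frac{1}{|p-k|^{1+\alpha}}\le\sum_{p\in\mathbb{Z}\setminus\{k,n\}}\frac{1}{|n-p|^{1+\alpha}}\frac{1}{|p-k|^{1+\alpha}}\le\frac{C_1}{(|n-k|+1)^{1+\alpha}}.$$
If $n\ne k$ there is exactly one further term, namely $p=n$, equal to $|n-k|^{-(1+\alpha)}\le 2^{1+\alpha}(|n-k|+1)^{-(1+\alpha)}$ because $|n-k|\ge 1$; if $n=k$ no extra term occurs. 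Hence $C_2:=C_1+2^{1+\alpha}$ works.

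I do not expect a serious obstacle: the argument is entirely elementary. The only point requiring a little care is the bookkeeping between $|n-k|$ and $|n-k|+1$, which is why the degenerate cases ($m=0$ in (i), $n=k$ in (ii)) must be examined separately and absorbed into the constants.
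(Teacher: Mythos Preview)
Your argument is correct; the splitting of $\mathbb{Z}\setminus\{0,m\}$ into the four ranges and the reduction of (ii) to (i) both go through exactly as you describe. The paper itself does not give a proof of this lemma---it simply states that ``the proofs of the above estimates are elementary and are omitted''---so your proposal is precisely the kind of elementary verification the authors had in mind.
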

The proofs of the above estimates are elementary and are omitted. We may assume that $C_1, C_2 \ge 1$.

Note that if $n \in \N$ and $a \ge n$  then 
\begin{equation}
\label{qan}
q(a - n) \ge \frac{1}{C_0^n}q(a) - \sum_{k = 0}^{n-1} \frac{1}{C_0^{k}}.
\end{equation}
This easily follows from condition d) in Assumptions (A).

\section{Lower bound semigroup estimates}

We denote $D= (-2, 2) \times (-2, 2)$. This section concerns estimates of $T_t \mathds{1}_{D}(x)$, which allow to obtain lower bound estimates of $\varphi_1$.

\begin{lem}\label{P_z_dolu}
	Let $a \in \mathbb{R}$, $B_a = \left(a-\frac{1}{2}, a + \frac{1}{2}\right) \times (-\frac{1}{2}, \frac{1}{2})$, $x \in \left(a-\frac{1}{4}, a + \frac{1}{4}\right) \times \left(-\frac{1}{4}, \frac{1}{4}\right)$ and  $0 < t_1 < t_2 < t_0$. Then
	\begin{equation}
	\label{P_z_dolu_inequality}
	P^x \left(X(\tau_{B_a}) \in \frac{D}{2}, t_1 < \tau_{B_a} < t_2\right) \geq \frac{c(t_0) \, (t_2 - t_1)}{(|x_1| + 1)^{1+\alpha}}.
	\end{equation}
\end{lem}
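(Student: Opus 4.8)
The plan is to use the Ikeda–Watanabe formula \eqref{IW_formula} applied to the box $B_a$, exploiting the product structure \eqref{rectangle} of the killed density. By the generalization of Ikeda–Watanabe,
\[
P^x\left(X(\tau_{B_a}) \in \tfrac{D}{2}, t_1 < \tau_{B_a} < t_2\right) = \int_{B_a} \int_{t_1}^{t_2} p_{B_a}(s,x,y)\, ds\, \mu_y\left(\tfrac{D}{2}\right) dy,
\]
where $\tfrac{D}{2} = (-1,1)\times(-1,1)$. The idea is to restrict the integral over $B_a$ to a smaller sub-box around $x$ where $p_{B_a}(s,x,y)$ is bounded below by a constant depending only on $t_0$, and where $\mu_y(\tfrac{D}{2})$ is also bounded below. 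First I would observe that, writing $y = (y_1,y_2)$, the measure $\mu_y$ charges the horizontal and vertical lines through $y$; the horizontal line through any $y \in B_a$ (which has $|y_2| < 1/2$) intersects $\tfrac{D}{2}$ in a segment, so $\mu_y(\tfrac{D}{2}) \geq \int \calA |z_1 - y_1|^{-1-\alpha}\, dz_1$ over $z_1$ with $(z_1, y_2) \in \tfrac{D}{2}\setminus \overline{B_a}$, i.e.\ over $z_1 \in (-1, a - 1/2) \cup (a+1/2, 1)$ roughly. Since $|y_1| \leq |a| + 1/2$ and the relevant $z_1$ range is at distance comparable to $|a|$ (when $|a|$ is large) from $y_1$, a jump from the line through $y$ out to $\tfrac{D}{2}$ covers a set whose $\mu_y$-mass is $\gtrsim (|a|+1)^{-1-\alpha} \asymp (|x_1|+1)^{-1-\alpha}$; when $|a|$ is small ($B_a \subset \tfrac{D}{2}$ is comparable to a fixed box) the mass is simply bounded below by a constant, which is also $\gtrsim (|x_1|+1)^{-1-\alpha}$ since $|x_1|$ is then bounded. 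So in all cases $\mu_y(\tfrac{D}{2}) \geq c/(|x_1|+1)^{1+\alpha}$ uniformly in $y \in B_a$.

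Next I would handle the time integral. Using \eqref{rectangle}, $p_{B_a}(s,x,y) = \tilde p_{(a-1/2, a+1/2)}(s, x_1, y_1)\, \tilde p_{(-1/2,1/2)}(s, x_2, y_2)$. Both factors are translates of the killed density of a standard one-dimensional $\alpha$-stable process in an interval of length $1$, started from an interior point at distance $\geq 1/4$ from the endpoints, evaluated at a target point also in the "inner quarter." Standard lower estimates for killed stable densities (continuity, strict positivity, and the fact that these densities are bounded below by a positive constant on compact subsets of the interior, uniformly for $s$ in a compact subinterval of $(0,\infty)$) give $p_{B_a}(s,x,y) \geq c(t_0)$ for all $s \in (t_1, t_2) \subset (0, t_0)$ and all $y$ in, say, the inner box $(a-1/4,a+1/4)\times(-1/4,1/4)$. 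Integrating over $s \in (t_1,t_2)$ contributes the factor $(t_2 - t_1)$, and integrating $y$ over the inner box contributes another fixed positive constant. Multiplying the three lower bounds yields exactly the claimed inequality.

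The main obstacle, I expect, is getting the geometry of the estimate on $\mu_y(\tfrac{D}{2})$ uniformly in $a \in \R$ — in particular making the two regimes ($|a|$ large versus $|a|$ bounded) match up cleanly so that the single bound $c/(|x_1|+1)^{1+\alpha}$ works for all $x_1$, including handling the borderline case and checking that when $|a|$ is large, $\tfrac{D}{2}$ still genuinely contains a chunk of the horizontal line through $y$ at the right distance (this is fine since $\tfrac{D}{2} = (-1,1)^2$ is fixed while $B_a$ moves off, so for $|a|$ large the segment $\{z_1 : |z_1| < 1\}$ at height $y_2$ lies entirely outside $\overline{B_a}$ and at distance $\asymp |a|$ from $y_1$). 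The time-and-space lower bound on $p_{B_a}$ is a routine consequence of known regularity of one-dimensional killed stable densities and the translation invariance, so no real difficulty there; one only needs $t_2 < t_0$ to keep $s$ in a fixed compact time interval so the constant can be chosen uniformly.
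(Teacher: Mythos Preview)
Your overall strategy via Ikeda--Watanabe and the lower bound $\mu_y(\tfrac{D}{2}\cap B_a^c)\ge c(|x_1|+1)^{-1-\alpha}$ is fine and matches the paper. The gap is in the time integral. You assert that $p_{B_a}(s,x,y)\ge c(t_0)$ for all $s\in(t_1,t_2)\subset(0,t_0)$ and all $y$ in the inner box, with a constant depending only on $t_0$. This is false: for fixed $y\neq x$ the killed density $\tilde p_{(a-1/2,a+1/2)}(s,x_1,y_1)$ tends to $0$ as $s\to 0^+$, so any pointwise lower bound valid on $(t_1,t_2)$ must depend on $t_1$. Since the lemma requires the constant to depend only on $t_0$ (the bound has to be uniform in all $0<t_1<t_2<t_0$), your step collapses precisely when $t_1$ is small. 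Invoking ``compact subintervals of $(0,\infty)$'' does not help, because $(0,t_0)$ is not bounded away from $0$.

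The fix is exactly what the paper does and costs nothing extra with your setup: do not restrict $y$ to an inner box or try to bound $p_{B_a}(s,x,y)$ pointwise. Your estimate $\mu_y(\tfrac{D}{2}\cap B_a^c)\ge c(|x_1|+1)^{-1-\alpha}$ already holds for \emph{every} $y\in B_a$, so pull it out of the $y$-integral and use
\[
\int_{B_a} p_{B_a}(s,x,y)\,dy \;=\; P^x(\tau_{B_a}>s)\;\ge\; P^x(\tau_{B_a}>t_0)\;\ge\; P^0\bigl(\tau_{(-1/4,1/4)^2}>t_0\bigr)\;=\;c(t_0),
\]
the last inequality coming from the fact that $x$ lies at distance $\ge 1/4$ from $\partial B_a$, so a translate of $(-1/4,1/4)^2$ centred at $x$ sits inside $B_a$. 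This survival probability \emph{is} bounded below uniformly in $s\in(0,t_0)$, and integrating in $s$ yields the factor $t_2-t_1$.
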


\begin{figure}[ht]
	
	\centering
	
	\includegraphics[scale=0.15]{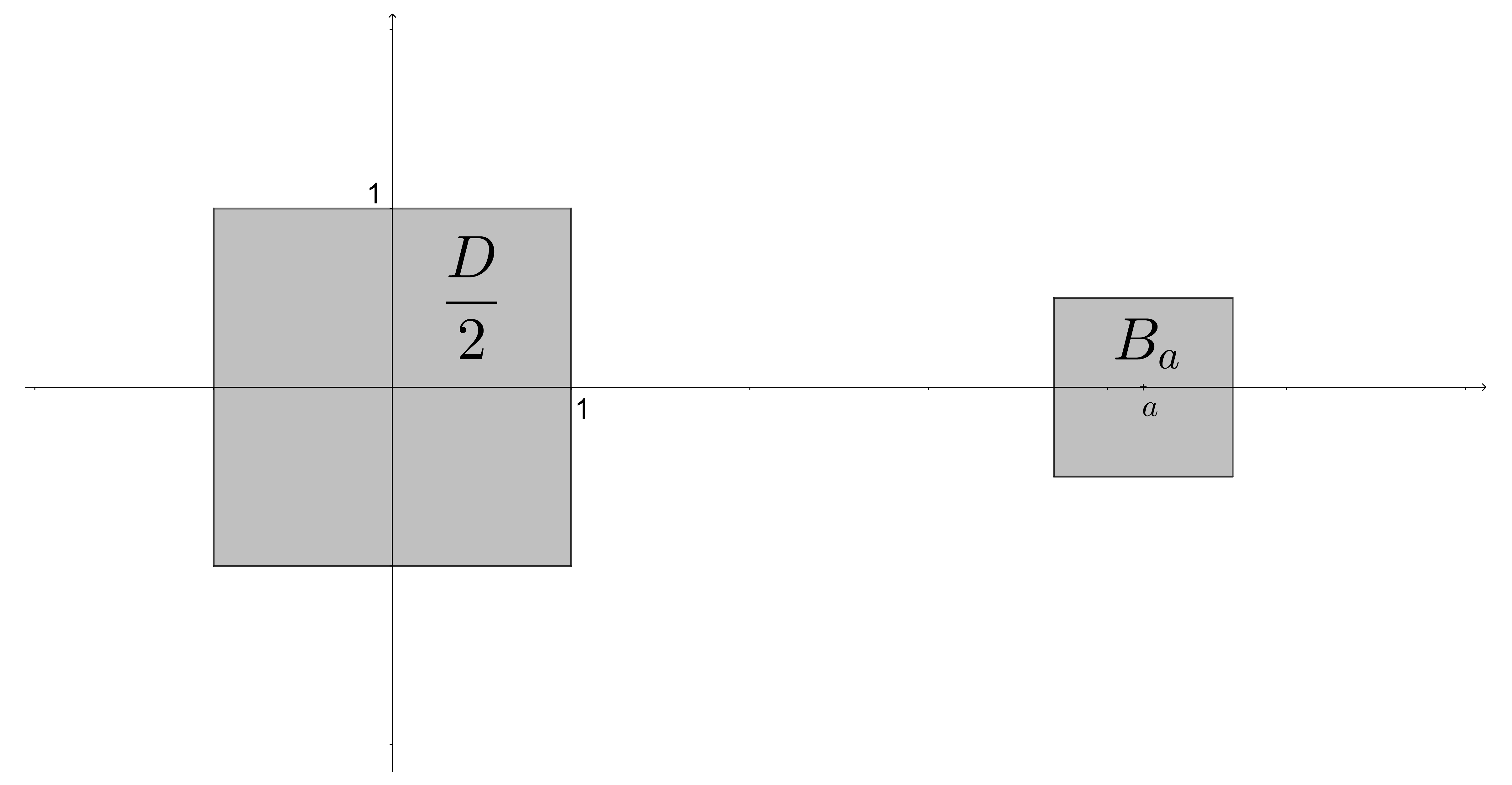}
	\caption{Sets $\frac{D}{2}$ and $B_a$.}\label{rys1}
\end{figure}
\label{rysunek1} 

\begin{proof} We may assume that $a \ge 0$. 
	From the Ikeda-Watanabe formula (\ref{IW_formula}), the left-hand side of (\ref{P_z_dolu_inequality}) is bounded from below by
	\begin{equation}\label{lem1eq1}
	\int_{a-\frac{1}{2}}^{a + \frac{1}{2}} \int_{-\frac{1}{2}}^{\frac{1}{2}} \int_{t_1}^{t_2} p_{B_a} (s, x, y) \, ds \int_{-1}^{-1/2} \frac{\calA}{|y_1 - z_1|^{1+\alpha}} \, dz_1 dy_2 dy_1.
	\end{equation}
	Note that for $y_1 \in \left(a-\frac{1}{2}, a + \frac{1}{2}\right)$ and $z_1 \in (-1, 1)$ we have $|y_1 - z_1| \leq 2 (|x_1| + 1)$. Hence \eqref{lem1eq1} is bounded from below by
	\begin{equation}
	\label{P_z_dolu_formula}
	\frac{c}{(|x_1| + 1)^{1+\alpha}} \int_{t_1}^{t_2} P^x (\tau_{B_a} > s) \, ds.
	\end{equation}
	Note that for $x \in \left(a-\frac{1}{4}, a + \frac{1}{4}\right) \times \left(-\frac{1}{4}, \frac{1}{4}\right)$ we have $\left(x_1 - \frac{1}{4}, x_1 + \frac{1}{4}\right) \times \left(x_2 - \frac{1}{4}, x_2 + \frac{1}{4}\right) \subseteq B_a$. Hence for such $x$ we have 
	\begin{equation*}
	\begin{split}
	P^x (\tau_{B_a} > s) &\geq P^x (\tau_{\left(x_1 - \frac{1}{4}, x_1 + \frac{1}{4}\right) \times \left(x_2 - \frac{1}{4}, x_2 + \frac{1}{4}\right)} > t_0) \\
	&= P^0 (\tau_{\left(- \frac{1}{4}, \frac{1}{4}\right) \times \left(- \frac{1}{4}, \frac{1}{4}\right)} > t_0) = c(t_0).
	\end{split}
	\end{equation*}
	This and (\ref{P_z_dolu_formula}) implies (\ref{P_z_dolu_inequality}).
\end{proof}

\begin{lem}\label{h_z_dolu}
	Let $a \in \mathbb{R}$, $B_a = \left(a-\frac{1}{2}, a + \frac{1}{2}\right) \times (-\frac{1}{2}, \frac{1}{2})$, $x \in \left(a-\frac{1}{4}, a + \frac{1}{4}\right) \times \left(-\frac{1}{4}, \frac{1}{4}\right)$,  $0 < t$. Let us denote
	\begin{equation*}
	h(a, x, t) = E^x \left( X(\tau_{B_a}) \in \frac{D}{2}, \tau_{B_a} < \frac{t}{2}, e^{- \int_0^{\tau_{B_a}} V(X_s) \, ds} \right).
	\end{equation*}
	Then we have
	\begin{equation*}
	h(a, x, t) \geq \frac{c(t)}{(|x_1| + 1)^{1+\alpha} (q(|x_1|) + 1)}.
	\end{equation*}
\end{lem}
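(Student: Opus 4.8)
The plan is to read the estimate off from Lemma~\ref{P_z_dolu}, after restricting the expectation defining $h(a,x,t)$ to exit times so short that the Feynman--Kac weight $\exp(-\int_0^{\tau_{B_a}} V(X_s)\,ds)$ stays bounded below by a positive absolute constant. Using the reflection $x_1 \mapsto -x_1$ (the process $X$ is symmetric, $V$ is radial, and $\tfrac{D}{2}$ and the family $\{B_a\}$ are invariant) I would first reduce to $a \ge 0$.

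The first step is to control the potential along paths that have not yet left $B_a$. If $s < \tau_{B_a}$ then $X_s \in B_a$, so $|X_s| \le |a| + 1$; since $x_1 \in (a - \tfrac14, a + \tfrac14)$ gives $|a| \le |x_1| + \tfrac14$, this yields $|X_s| \le |x_1| + 2$, and hence, $q$ being nondecreasing, two applications of condition d) in Assumptions (A) give
$$
V(X_s) = q(|X_s|) \le q(|x_1| + 2) \le (C_0^2 + C_0)\bigl(q(|x_1|) + 1\bigr) =: M .
$$
The second step is the choice of the time window: I would set $t_0 = t/2$, $t_2 = \min\!\bigl(\tfrac{t}{3}, \tfrac1M\bigr)$ and $t_1 = \tfrac{t_2}{2}$, so that $0 < t_1 < t_2 < t_0$, and on the event $\{\tau_{B_a} < t_2\}$ one has $\int_0^{\tau_{B_a}} V(X_s)\,ds \le M\tau_{B_a} < Mt_2 \le 1$, so that the weight is $\ge e^{-1}$ there.

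The third step combines these. Since $\{X(\tau_{B_a}) \in \tfrac{D}{2},\ t_1 < \tau_{B_a} < t_2\} \subset \{X(\tau_{B_a}) \in \tfrac{D}{2},\ \tau_{B_a} < \tfrac{t}{2}\}$, restricting to the former event and invoking Lemma~\ref{P_z_dolu} with this $t_0,t_1,t_2$ gives
$$
h(a,x,t) \ge e^{-1}\, P^x\!\left(X(\tau_{B_a}) \in \tfrac{D}{2},\ t_1 < \tau_{B_a} < t_2\right) \ge \frac{e^{-1} c(t)\,(t_2 - t_1)}{(|x_1| + 1)^{1+\alpha}} ,
$$
and since $q(|x_1|) + 1 \ge 1$ we have $t_2 - t_1 = \tfrac12 \min\!\bigl(\tfrac t3, \tfrac1M\bigr) \ge \tfrac{c(t)}{q(|x_1|) + 1}$; inserting this gives the claim. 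I expect the only genuine point to be the choice $t_2 \asymp 1/M$: the naive bound $\exp(-\int_0^{\tau_{B_a}} V) \ge e^{-Mt/2}$ is exponentially small in $q(|x_1|)$, whereas shrinking the window to length of order $1/M$ keeps the weight of order one and costs only a factor $\asymp 1/(q(|x_1|)+1)$ in the probability --- exactly the loss the statement allows.
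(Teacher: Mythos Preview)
Your argument is correct. The approach, however, differs from the paper's in a way worth noting.

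The paper decomposes the whole interval $(0,t/2)$ dyadically as $\bigcup_{i\ge 1}[\tfrac{t}{2(i+1)},\tfrac{t}{2i})$, bounds the weight on the $i$-th piece by $e^{-\frac{t}{2i}C_0(q(|x_1|)+1)}$, applies Lemma~\ref{P_z_dolu} on each piece, and then sums the resulting series using the estimate $\sum_{i\ge1}\tfrac{e^{-r/i}}{i(i+1)}\ge \tfrac{e^{-1}}{r+1}$ from Lemma~\ref{series}. Your proof sidesteps this entirely: you select a single window of length $\asymp 1/M$ with $M\asymp q(|x_1|)+1$, on which the weight is uniformly $\ge e^{-1}$, and read off the bound directly from Lemma~\ref{P_z_dolu}. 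In effect you pick out the dominant term of the paper's series by hand rather than summing all scales.

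What each buys: your route is shorter and avoids Lemma~\ref{series} altogether, making the mechanism---trade a factor $1/(q(|x_1|)+1)$ in the time window against keeping the Feynman--Kac weight of order one---completely transparent. The paper's decomposition is more systematic and mirrors the structure used later in the upper-bound arguments (Lemma~\ref{lemat_wart_oczek_l_1}), where the companion inequality in Lemma~\ref{series} is genuinely needed; so their version keeps Sections~3 and~4 methodologically parallel. For this particular lemma, though, your single-scale argument is entirely adequate and arguably cleaner.
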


\begin{proof}
	Note that
	\begin{equation}\label{h_z_dolu_pomocnicze}
	\begin{split}
	h(a, x, t) &= \sum_{i=1}^{\infty} E^x \left( X(\tau_{B_a}) \in \frac{D}{2}, \frac{t}{2(i+1)} \leq \tau_{B_a} < \frac{t}{2i}, e^{- \int_0^{\tau_{B_a}} V(X_s) \, ds} \right).
	\end{split}
	\end{equation}
	For $\tau_{B_a} < \frac{t}{2i}$ and $X_0 = x$ we have
	\begin{equation*}
	\int_0^{\tau_{B_a}} V(X_s) \, ds \leq \int_0^{\tau_{B_a}} q(|x_1|+1) \, ds \leq \frac{t}{2i} (C_0 q(|x_1|) + C_0).
	\end{equation*}
	From the above inequality and (\ref{h_z_dolu_pomocnicze}) we get
	\begin{equation*}
	h(a, x, t) \geq \sum_{i=1}^{\infty} e^{-\frac{t}{2i} (C_0 q(|x_1|) + C_0)} P^x \left( X(\tau_{B_a}) \in \frac{D}{2}, \frac{t}{2(i+1)} \leq \tau_{B_a} < \frac{t}{2i} \right).
	\end{equation*}
	Using Lemma \ref{P_z_dolu} and then Lemma \ref{series} we obtain
	\begin{equation*}
	\begin{split}
	h(a, x, t) &\geq \sum_{i=1}^{\infty} e^{-\frac{t}{2i} (C_0 q(|x_1|) + C_0)} \, \frac{c_1(t) \, \left( \frac{t}{2i} - \frac{t}{2(i+1)} \right)}{(|x_1| + 1)^{1+\alpha}} \\
	 &\geq \frac{e^{-1} t c_1(t)}{2 (|x_1| + 1)^{1+\alpha} \left( t (C_0 q(|x_1|) + C_0)/2 + 1 \right)}.
	\end{split}
	\end{equation*}
\end{proof}

\begin{lem}\label{k_z_dolu}
	Let $x = (x_1, x_2) \in \mathbb{R}^2$, $|x_2| \geq 1/2$, $0 < t$, 
\begin{eqnarray*} 
U_x &=& \left(x_1- \frac{1}{2}, x_1 + \frac{1}{2}\right) \times \left(-\frac{1}{2}, \frac{1}{2}\right), \,
U_x' = \left(x_1-\frac{1}{4}, x_1 + \frac{1}{4}\right) \times \left(-\frac{1}{4}, \frac{1}{4}\right),\\
W_x &=& \left(x_1- \frac{1}{4}, x_1 + \frac{1}{4}\right) \times \left(x_2- \frac{1}{4}, x_2 + \frac{1}{4}\right).\\ 
\end{eqnarray*}
Let us denote
	\begin{equation*}
	k(x, t) = \begin{multlined}[t] E^x \bigg( X(\tau_{W_x}) \in U_x', \tau_{W_x} < \frac{t}{2}, e^{-\int_0^{\tau_{W_x}} V(X_s) \, ds} \\
 \times E^{X(\tau_{W_x})} \left( X(\tau_{U_x}) \in \frac{D}{2}, \tau_{U_x} < \frac{t}{2}, e^{-\int_0^{\tau_{U_x}} V(X_s) \, ds} \right) \bigg). \end{multlined}
	\end{equation*}
	Then we have
	\begin{equation*}
	k(x, t) \geq \frac{c(t)}{(|x_1| + 1)^{1+\alpha} (|x_2| + 1)^{1+\alpha} (q(|x_1|) + 1) (q(|x|) + 1)}.
	\end{equation*}
\end{lem}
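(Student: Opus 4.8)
\textbf{Proof proposal for Lemma \ref{k_z_dolu}.}

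The plan is to decompose the process into two stages, exactly as the definition of $k(x,t)$ suggests, and bound each stage using the estimates already proved. In the first stage the process starts from $x=(x_1,x_2)$ with $|x_2|\ge 1/2$ inside the little square $W_x$ (centered at $x$), and we want it to exit $W_x$ landing in $U_x'$, which is a small box sitting on the $x_1$-axis around the point $(x_1,0)$; after that, from the new position in $U_x'$, we want the second-stage excursion from $U_x$ to land in $\frac{D}{2}$. The key observation is that the second stage is precisely the situation handled in Lemma \ref{h_z_dolu}: indeed $U_x = B_{x_1}$ in the notation of Lemma \ref{h_z_dolu} (with $a=x_1$), $U_x' = (x_1-\frac14,x_1+\frac14)\times(-\frac14,\frac14)$ is exactly the set of admissible starting points there, and for any $w=(w_1,w_2)\in U_x'$ we have $|w_1|\le |x_1|+\frac14$, so $q(|w_1|)\le q(|x_1|+1)\le C_0(q(|x_1|)+1)$ by condition d). Hence Lemma \ref{h_z_dolu} gives
$$
E^{w}\!\left(X(\tau_{U_x})\in\tfrac{D}{2},\,\tau_{U_x}<\tfrac{t}{2},\,e^{-\int_0^{\tau_{U_x}}V(X_s)\,ds}\right)\ge \frac{c(t)}{(|x_1|+1)^{1+\alpha}(q(|x_1|)+1)}
$$
uniformly over $w\in U_x'$. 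So the inner expectation in the definition of $k(x,t)$ is bounded below by this quantity, and it can be pulled out of the outer expectation.

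What remains is to bound the first-stage quantity
$$
E^x\!\left(X(\tau_{W_x})\in U_x',\,\tau_{W_x}<\tfrac{t}{2},\,e^{-\int_0^{\tau_{W_x}}V(X_s)\,ds}\right)
\ge \frac{c(t)}{(|x_2|+1)^{1+\alpha}(q(|x|)+1)}.
$$
This is proved by the same two-step scheme as Lemma \ref{h_z_dolu}: first split the event according to $\frac{t}{2(i+1)}\le\tau_{W_x}<\frac{t}{2i}$, then on that event bound the Feynman--Kac exponent crudely, using monotonicity of $q$ together with condition d) to write $V(X_s)=q(|X_s|)\le q(|x|+1)\le C_0(q(|x|)+1)$ for $s<\tau_{W_x}$ (since $W_x$ has diameter $\le 1/\sqrt2<1$, the path stays within distance $1$ of $x$), giving the factor $e^{-\frac{t}{2i}(C_0 q(|x|)+C_0)}$. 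For the remaining probability $P^x(X(\tau_{W_x})\in U_x',\,\frac{t}{2(i+1)}\le\tau_{W_x}<\frac{t}{2i})$ I would use the Ikeda--Watanabe formula (\ref{IW_formula}): the process can jump from $W_x$ directly to $U_x'$ along the vertical coordinate axis through $x_1$, i.e. for $y=(y_1,y_2)\in W_x$ the measure $\mu_y$ charges the segment $\{y_1\}\times(-\frac14,\frac14)$ with density $\calA/|z_2-y_2|^{1+\alpha}$, and for $y_2\in(x_2-\frac14,x_2+\frac14)$ and $z_2\in(-\frac14,\frac14)$ one has $|z_2-y_2|\le 2(|x_2|+1)$, so $\mu_y(U_x')\ge c/(|x_2|+1)^{1+\alpha}$. (Here we use crucially that $U_x'$ and $W_x$ lie on the same vertical line $\{y_1=x_1\}$ up to the $x_1$-window, so that a single vertical jump suffices; one should check that the horizontal coordinate of the jump target stays in $(x_1-\frac14,x_1+\frac14)$, which holds because $y_1$ ranges over $(x_1-\frac14,x_1+\frac14)$ and the jump fixes the first coordinate.) Then $P^x(\cdot)\ge \frac{c}{(|x_2|+1)^{1+\alpha}}\int_{t/(2(i+1))}^{t/(2i)}P^x(\tau_{W_x}>s)\,ds$, and $P^x(\tau_{W_x}>s)\ge P^0(\tau_{(-1/4,1/4)^2}>t/2)=c(t)$ for $s<t/2$ by translation invariance, so this probability is $\ge c(t)(|x_2|+1)^{-(1+\alpha)}(\frac{t}{2i}-\frac{t}{2(i+1)})$.

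Summing over $i$ and applying the first inequality of Lemma \ref{series} with $r=\frac{t}{2}(C_0 q(|x|)+C_0)$ yields the first-stage bound $\ge c(t)(|x_2|+1)^{-(1+\alpha)}(q(|x|)+1)^{-1}$. Multiplying the two stages together gives exactly
$$
k(x,t)\ge \frac{c(t)}{(|x_1|+1)^{1+\alpha}(|x_2|+1)^{1+\alpha}(q(|x_1|)+1)(q(|x|)+1)},
$$
as claimed. The main obstacle is the first stage: one must verify carefully that a single coordinate-axis jump can carry the process from $W_x$ into $U_x'$ (this is where the singular, cylindrical structure of the Lévy measure $\mu_y$ is used, and why $U_x'$ was chosen to have the same $x_1$-window as $W_x$), and that the crude bound $|z_2-y_2|\le 2(|x_2|+1)$ together with $|x_2|\ge 1/2$ produces the correct power of $(|x_2|+1)$; the Feynman--Kac exponent estimate and the series summation are then routine, mirroring Lemma \ref{h_z_dolu}.
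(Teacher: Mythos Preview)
Your proposal is correct and follows essentially the same approach as the paper's proof: apply Lemma~\ref{h_z_dolu} to the inner expectation (using $|X(\tau_{W_x})^{(1)}|<|x_1|+1$ to replace the random first coordinate by $|x_1|$), then bound the outer expectation by the very method of Lemma~\ref{h_z_dolu}, only with a vertical jump in place of a horizontal one and $q(|x|)$ in place of $q(|x_1|)$. The paper dismisses this second step with ``by similar arguments as in the proof of Lemma~\ref{h_z_dolu}''; you have in fact written out those similar arguments explicitly, including the verification that the cylindrical L\'evy measure permits a single vertical jump from $W_x$ into $U_x'$, so your write-up is, if anything, more complete than the paper's.
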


\begin{figure}[ht]
	
	\centering
	
	\includegraphics[scale=0.1]{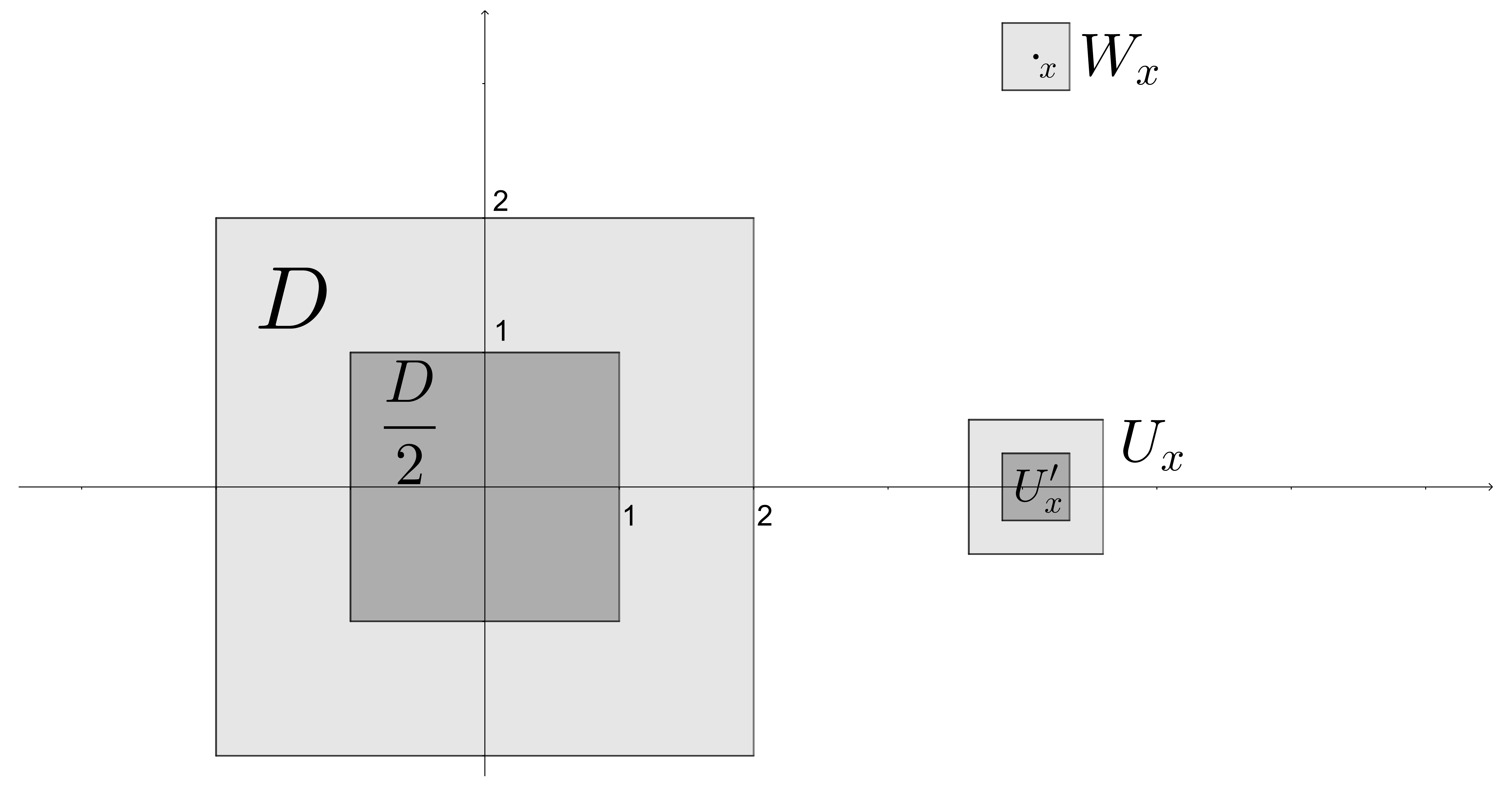}
	\caption{Sets $D$, $W_x$ and $U_x$.}\label{rys2}
\end{figure}
\label{rysunek2}

\begin{proof}
	We denote $X(\tau_{W_x}) = (X(\tau_{W_x})^{(1)}, X(\tau_{W_x})^{(2)})$. From Lemma \ref{h_z_dolu} we get
	\begin{eqnarray}
	\nonumber
	&& k(x, t)	= E^x \Big( X(\tau_{W_x}) \in U_x', \tau_{W_x} < \frac{t}{2}, e^{-\int_0^{\tau_{W_x}} V(X_s) \, ds} \, h (x_1, X(\tau_{W_x}), t) \Big) \\
	\nonumber
	&&\geq E^x \left( X(\tau_{W_x}) \in U_x', \tau_{W_x} < \frac{t}{2}, e^{-\int_0^{\tau_{W_x}} V(X_s) \, ds} \right. \\
	\nonumber
	&& \quad \quad \quad \times \left.
	 \frac{c(t)}{(|X(\tau_{W_x})^{(1)}| + 1)^{1+\alpha} (q(|X(\tau_{W_x})^{(1)}|) + 1)} \right) \\
	\label{expected_value}
	&&\geq \frac{c_1(t)}{(|x_1| + 1)^{1+\alpha} (q(|x_1|) + 1)}   E^x \left( X(\tau_{W_x}) \in U_x', \tau_{W_x} < \frac{t}{2}, e^{-\int_0^{\tau_{W_x}} V(X_s) \, ds} \right). \quad \quad 
	\end{eqnarray}
	The last inequality holds because $|X(\tau_{W_x})^{(1)}| < |x_1| +\frac{1}{4} < |x_1| + 1$.
	
	By similar arguments as in the proof of Lemma \ref{h_z_dolu} we obtain that the expected value in (\ref{expected_value}) is bounded from below by
	$$
	\frac{c_2(t)}{(|x_2| + 1)^{1+\alpha} \left(t (C_0 q(|x|) + C_0) + 1 \right)},
	$$
	which implies the assertion of the lemma.
\end{proof}

\begin{lem}\label{lemat_z_dolu}
Let $x = (x_1, x_2) \in \mathbb{R}^2$, $0 < t$. Then
\begin{equation}
\label{TtD_lower_bound}
T_t \mathds{1}_D(x) \geq \frac{C_3(t)}{(|x_1|+1)^{1+\alpha} (|x_2|+1)^{1+\alpha} (q(\min(|x_1|, |x_2|)) + 1) (q(|x|) + 1)}.
\end{equation}
\end{lem}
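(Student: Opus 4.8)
The plan is to derive the lower bound on $T_t\mathds{1}_D(x)$ by splitting into cases according to the sizes of the two coordinates of $x$ and, in each case, running the process along a suitable ``staircase'' of unit boxes that brings it into $D/2 \subset D$, while controlling the Feynman--Kac exponential weight at every step. The key observation is that the quantity $k(x,t)$ from Lemma~\ref{k_z_dolu} is, by the strong Markov property and the Feynman--Kac representation~\eqref{FeynmanKac}, a lower bound for $T_{t}\mathds{1}_{D/2}(x)$ (hence for $T_t\mathds{1}_D(x)$) whenever $|x_2|\ge 1/2$: indeed, unraveling the nested expectation in the definition of $k(x,t)$ shows $k(x,t)\le E^x\big(e^{-\int_0^{t}V(X_s)\,ds}\mathds{1}_{D/2}(X_t)\big)$ after one also uses that, from the terminal box, the process stays inside $D/2$ for the remaining time with positive probability and bounded exponential cost. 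So first I would record this reduction: $T_t\mathds{1}_D(x)\ge c(t)\,k(x,t)$ for $|x_2|\ge 1/2$, and symmetrically with the roles of $x_1,x_2$ interchanged when $|x_1|\ge 1/2$.

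Next I would treat the three regions. \textbf{Case 1:} $|x_1|\ge 1/2$ and $|x_2|\ge 1/2$. Here Lemma~\ref{k_z_dolu} (in the form just described) and its mirror image give
$$
T_t\mathds{1}_D(x)\ \ge\ \frac{c(t)}{(|x_1|+1)^{1+\alpha}(|x_2|+1)^{1+\alpha}(q(|x_1|)+1)(q(|x|)+1)}
$$
and the same with $q(|x_1|)$ replaced by $q(|x_2|)$; averaging (or taking the better of the two) and using that $q$ is nondecreasing so $\min(q(|x_1|),q(|x_2|))=q(\min(|x_1|,|x_2|))$, together with the elementary bound $(q(|x_1|)+1)(q(|x|)+1)\le (q(|x|)+1)^2\le c\,(q(\min(|x_1|,|x_2|))+1)(q(|x|)+1)$ is not quite enough, so instead I would simply keep whichever of the two estimates has the smaller $q$-factor, which is exactly $q(\min(|x_1|,|x_2|))$, yielding~\eqref{TtD_lower_bound}. \textbf{Case 2:} $|x_2|<1/2$ (the case $|x_1|<1/2$ being symmetric). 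Then $\min(|x_1|,|x_2|)<1/2$, so $q(\min(|x_1|,|x_2|))\le q(1/2)\le C_0(q(0)+1)$ is bounded by a constant, and also $|x_2|+1$ is comparable to $1$; moreover $|x|$ is comparable to $|x_1|+1$, so $q(|x|)+1$ is comparable to $q(|x_1|)+1$ up to the doubling constant $C_0$ via~\eqref{qan} or condition~d). Thus in this case~\eqref{TtD_lower_bound} reduces to showing $T_t\mathds{1}_D(x)\ge c(t)/((|x_1|+1)^{1+\alpha}(q(|x_1|)+1))$, which follows directly from Lemma~\ref{h_z_dolu} with $a=x_1$ (after the same strong-Markov/Feynman--Kac reduction $T_t\mathds{1}_D(x)\ge c(t)h(x_1,x,t)$, valid since $x\in(x_1-\tfrac14,x_1+\tfrac14)\times(-\tfrac14,\tfrac14)\subset(x_1-\tfrac14,x_1+\tfrac14)\times(-\tfrac12,\tfrac12)$ when $|x_2|<\tfrac14$, and for $\tfrac14\le|x_2|<\tfrac12$ one first shifts into the smaller box at bounded cost).

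The main obstacle I expect is bookkeeping rather than conceptual: making the reductions ``$T_t\mathds{1}_D(x)\ge c(t)\,k(x,t)$'' and ``$T_t\mathds{1}_D(x)\ge c(t)\,h(x_1,x,t)$'' fully rigorous requires invoking the strong Markov property at the successive exit times together with the multiplicative structure of the Feynman--Kac functional $\exp(-\int_0^{\tau}V)\cdot\big(\exp(-\int_0^{t-\tau}V)\circ\theta_\tau\big)$, and then absorbing the ``remaining time'' factor (the probability of staying in $D/2$ for the leftover time, with bounded potential cost since $V$ is bounded on the bounded set $D$) into the constant $c(t)$. One must also be slightly careful at the boundary of the case split, $|x_2|$ near $1/2$, where a preliminary one-step move into the relevant smaller box is needed; this costs only a bounded factor because $V$ is bounded on any fixed neighbourhood of such $x$ in the relevant regime. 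Once these reductions are in place, the case analysis is routine and~\eqref{TtD_lower_bound} follows, with $C_3(t)$ the minimum of the finitely many constants produced.
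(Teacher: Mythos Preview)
Your strategy is correct and matches the paper's: reduce $T_t\mathds{1}_D(x)$ to $k(x,t)$ (or its mirror) via the strong Markov property, absorb the ``stay in $D$ for the leftover time'' factor into $c(t)$, and then invoke Lemma~\ref{k_z_dolu}. The paper carries this out essentially as you describe.

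The only real difference is the case split. The paper splits on $\max(|x_1|,|x_2|)\ge 1/2$ versus $<1/2$. In the first case it assumes WLOG $|x_1|\le |x_2|$, so that $|x_2|\ge 1/2$ and Lemma~\ref{k_z_dolu} applies \emph{directly}, and the resulting factor $q(|x_1|)+1$ is already $q(\min(|x_1|,|x_2|))+1$; no ``take the better of two bounds'' is needed. In the second case $x$ lies in the compact set $[-1/2,1/2]^2$ and positivity/continuity of $x\mapsto T_t\mathds{1}_D(x)$ gives the bound immediately. This split cleanly eliminates both pieces of bookkeeping you flagged: there is no need to average two mirror estimates in your Case~1, and there is no awkward boundary strip $1/4\le |x_2|<1/2$ in your Case~2 requiring a preliminary ``shift'', because that region is already covered by the mirrored application of Lemma~\ref{k_z_dolu} (which only needs $|x_1|\ge 1/2$). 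Your route via Lemma~\ref{h_z_dolu} in Case~2 would also work, but it is a detour; the paper simply never leaves the two-jump framework of Lemma~\ref{k_z_dolu}.
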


\begin{proof}
We will consider 2 cases: 

Case 1. $\max(|x_1|,|x_2|) \ge 1/2$,

Case 2. $\max(|x_1|,|x_2|) < 1/2$.

Let us consider Case 1. We may assume that $|x_1| \le |x_2|$ (the proof for $|x_1| > |x_2|$ is analogous). We will use the notation as in previous lemmas, i.e. $W_x = \left(x_1- \frac{1}{4}, x_1 + \frac{1}{4}\right) \times \left(x_2- \frac{1}{4}, x_2 + \frac{1}{4}\right)$ (this is a square containing the starting point), $U_x = \left(x_1- \frac{1}{2}, x_1 + \frac{1}{2}\right) \times (- \frac{1}{2}, \frac{1}{2})$, $U_x' = \left(x_1-\frac{1}{4}, x_1 + \frac{1}{4}\right) \times \left(-\frac{1}{4}, \frac{1}{4}\right)$. Then
	\begin{eqnarray*}
	&& T_t \mathds{1}_D(x) = E^x\left(X_t \in D, e^{-\int_0^t V(X_s) \, ds}\right) \\
	&& \geq 
	E^x \Big( X(\tau_{W_x}) \in U_x', \tau_{W_x} < \frac{t}{2}, X(\tau_{W_x} + \tau_{U_x} \circ \theta_{\tau_{W_x}}) \in \frac{D}{2}, \tau_{U_x} \circ \theta_{\tau_{W_x}} < \frac{t}{2},\\
	&& \forall s \in [\tau_{W_x} + \tau_{U_x} \circ \theta_{\tau_{W_x}}, t+\tau_{W_x} + \tau_{U_x} \circ \theta_{\tau_{W_x}}] \, X_s \in D,\\
	&& \exp \left( -\int_0^{\tau_{W_x} + \tau_{U_x} \circ \theta_{\tau_{W_x}}} V(X_s) \, ds \right)   \exp \left( -\int_{\tau_{W_x} + \tau_{U_x} \circ \theta_{\tau_{W_x}}}^{t + \tau_{W_x} + \tau_{U_x} \circ \theta_{\tau_{W_x}}} V(X_s) \, ds \right) \bigg).
	\end{eqnarray*}
Now we estimate the last integral in the expression above. We clearly have
	\begin{equation*}
	\int_{\tau_{W_x} + \tau_{U_x} \circ \theta_{\tau_{W_x}}}^{t + \tau_{W_x} + \tau_{U_x} \circ \theta_{\tau_{W_x}}} V(X_s) \, ds \leq \int_0^{2t} V((2,2)) \, ds \leq c(t).
	\end{equation*}
because we have $V(X_s) \in D$ for $s \in [\tau_{W_x} + \tau_{U_x} \circ \theta_{\tau_{W_x}}, t + \tau_{W_x} + \tau_{U_x} \circ \theta_{\tau_{W_x}}]$. Using this and the strong Markov property we get
	\begin{eqnarray*}
	T_t \mathds{1}_D(x) &\geq&  e^{-c(t)} E^x \left( X(\tau_{W_x}) \in U_x', \tau_{W_x} < \frac{t}{2}, e^{-\int_0^{\tau_{W_x}} V(X_s) \, ds} 
	\right. \\
	&& \left. \times E^{X(\tau_{W_x})} \left( X(\tau_{U_x}) \in \frac{D}{2}, \tau_{U_x} < \frac{t}{2}, e^{-\int_0^{\tau_{U_x}} V(X_s) \, ds}
	 P^{X(\tau_{U_x})} (\tau_D > t) \right) \right). 
	\end{eqnarray*}
	
Since for $X(\tau_{U_x}) \in D/2$ we have $(X(\tau_{U_x})^{(1)}-1, X(\tau_{U_x})^{(1)}+1) \times (X(\tau_{U_x})^{(2)}-1, X(\tau_{U_x})^{(2)}+1) \subseteq D$, we get
\begin{eqnarray*}
P^{X(\tau_{U_x})} (\tau_D > t) &\geq& 
P^{X(\tau_{U_x})} (\tau_{(X(\tau_{U_x})^{(1)}-1, X(\tau_{U_x})^{(1)}+1) \times (X(\tau_{U_x})^{(2)}-1, X(\tau_{U_x})^{(2)}+1)} > t) \\
&=& P^0(\tau_{(-1, 1) \times (-1, 1)} > t)\\
&=& c_1(t).
\end{eqnarray*}
Hence
$T_t \mathds{1}_D(x) \geq c_2(t) k(x, t)$,
 where function $k$ is defined in Lemma \ref{k_z_dolu}. From this Lemma we get
\begin{equation}\label{z_dolu_poziom}
T_t \mathds{1}_D(x) \geq \frac{c_3(t)}{(|x_1| + 1)^{1+\alpha} (|x_2| + 1)^{1+\alpha} (q(|x_1|) + 1) (q(|x|) + 1)},
\end{equation}
which implies (\ref{TtD_lower_bound}) in Case 1, when $|x_1| \le |x_2|$.
	
Now, let us consider Case 2. 	Note that the function $x \to T_t \mathds{1}_D(x)$ is continuous and positive, so it attains a positive minimum value on a compact set $[-1/2,1/2] \times [-1/2,1/2]$. It follows that  for any $x \in [-1/2,1/2] \times [-1/2,1/2]$ we have
\begin{equation*}
T_t \mathds{1}_D (x) \geq c_4(t), 
\end{equation*}
which implies (\ref{TtD_lower_bound}) in Case 2.	
\end{proof}

\section{Upper bound semigroup estimates}
In this section we estimate $T_t \mathds{1}_{\mathbb{R}^2}$ from above. This is the most difficult and technical part of the paper. In the whole section we assume that $V$ satisfies \eqref{warunek_logarytmu}. 

We introduce some notation, starting with $1 \times 1$ and $3 \times 3$ squares.
\begin{defn}
	Let $K(x) = ( \lceil x_1 \rceil -2, \lceil x_1 \rceil + 1) \times (\lceil x_2 \rceil -2, \lceil x_2 \rceil + 1)$ and $K_1(x) =( \lceil x_1 \rceil - 1, \lceil x_1 \rceil ] \times (\lceil x_2 \rceil - 1, \lceil x_2 \rceil ]$, where $x = (x_1, x_2) \in \mathbb{R}^2$.
\end{defn}

\begin{defn}
	Let $\sigma_0 = 0$. For $l \in \mathbb{N}$ we define inductively $\sigma_l = \inf \{t \geq \sigma_{l-1}: X_t \notin K(X(\sigma_{l-1}))\}$.
	We define $J(\sigma_l) = X(\sigma_l) - X({\sigma_l}_-)$, where $X({\sigma_l}_-) = \lim_{t \to {\sigma_l}_-} X_t$. We refer to jumps $J(\sigma_l)$ as to $\sigma$ jumps.
\end{defn}
It is clear that each $\sigma$ jump is either vertical or horizontal.

\begin{defn}
	Let $n= (n_1, n_2) \in \mathbb{Z}^2$. We denote $A_n = (n_1 - 2, n_1 + 1) \times (n_2 -2, n_2 + 1) = K(n)$ and $R_n = (n_1 -1, n_1] \times (n_2 - 1, n_2] = K_1(n)$.
\end{defn}

\begin{defn}
	We denote $\tau_n = \tau_{A_n}$.
\end{defn}

By $N$ we denote a constant $N \in \mathbb{N}$ satisfying $N \geq 3$, which may depend on $\alpha$, $V$ and $t$. Its value will be chosen later so that certain inequalities hold. 

\begin{defn}
	Let $\widetilde{R}_N = (-N, N] \times (-N, N]$.
\end{defn}

\begin{defn}\label{def_S}
	Let $k = (k_1, k_2) \in \mathbb{Z}^2$, $l \in \mathbb{N}$ and $t > 0$. Denote $\mathcal{P} = \{p \in \mathbb{Z}^2 : A_p \cap \widetilde{R}_{N} = \emptyset \}$. For $l \ge 2$ let
	\begin{eqnarray*}
	&&S(k, l, t) =\\ 
	 && \bigcup_{p^{(1)} \in \mathcal{P}} \cdots \bigcup_{p^{(l-1)} \in \mathcal{P}} \{X(\sigma_1) \in R_{p^{(1)}}, \dots, X(\sigma_{l-1}) \in R_{p^{(l-1)}}, X(\sigma_l) \in R_k, \sigma_l \le t\}.
	\end{eqnarray*}
	For $l=1$
	\begin{equation*}
	S(k, 1, t) = \{X(\sigma_1) \in R_k, \sigma_1 \le t\}.
	\end{equation*}
\end{defn}

Roughly speaking, $S(k,l,t)$ is an event that the process makes at least $l$ $\sigma$ jumps before time $t$, at time $\sigma_l$ it jumps to square $R_k$ and it does not visit $\widetilde{R}_N$ before $\sigma_l$.
\begin{defn}
	Let $l \in \mathbb{N}$ and $t > 0$. For $l \ge 2$ let
	\begin{eqnarray*}
	&&\widetilde{S} (N + 2, l, t) =\\
	&& \bigcup_{p^{(1)} \in \mathcal{P}} \cdots \bigcup_{p^{(l-1)} \in \mathcal{P}} \{X(\sigma_1) \in R_{p^{(1)}}, \dots, X(\sigma_{l-1}) \in R_{p^{(l-1)}}, X(\sigma_l) \in \widetilde{R}_{N+2}, \sigma_l \le t\}.
	\end{eqnarray*}
	For $l=1$
	\begin{equation*}
	\widetilde{S} (N+2, 1, t) = \{X(\sigma_1) \in \widetilde{R}_{N+2}, \sigma_1 \le t \}.
	\end{equation*}
\end{defn}

Roughly speaking, $\widetilde{S} (N + 2, l, t)$ is an event that the process makes at least $l$ $\sigma$ jumps before time $t$, at time $\sigma_l$ it jumps to square $\widetilde{R}_{N+2}$ and it does not visit $\widetilde{R}_N$ before $\sigma_l$.

\begin{defn}
	Let $n, k \in \mathbb{Z}^2$, $l \in \mathbb{N}$, $m \in \{0, 1, \ldots, l\}$ and $t > 0$. We introduce a function $f(k) = \min_{x \in K(k)} |x|$. If $f(n) > f(k)$ and $m \ge 1$, let us define
	\begin{eqnarray*}
	W(n, m, k, l, t) = &&\bigcup_{p^{(1)} \in P_{1,k}} \cdots \bigcup_{p^{(m-1)} \in P_{1,k}} \bigcup_{p^{(m)} =k} \bigcup_{p^{(m+1)} \in P_{2,k}} \\
	&\cdots& \bigcup_{p^{(l)} \in P_{2,k}} \{X(\sigma_1) \in K_1(p^{(1)}), \dots X(\sigma_l) \in K_1(p^{(l)}), \sigma_l \le t\}, 
	\end{eqnarray*}
	where $P_{1,k} = \{p \in \mathcal{P} : f(p) > f(k)\}$ and $P_{2,k} = \{p \in \mathcal{P} : f(p) \geq f(k) \}$. For $f(n) \leq f(k)$ let $W(n, m, k, l, t) = \emptyset$.
	
	Let us also define
	\begin{equation*}
	W(n, 0, k, l, t) = \bigcup_{p^{(1)} \in P_{2,k}} \cdots \bigcup_{p^{(l)} \in P_{2,k}} \{X(\sigma_1) \in K_1(p^{(1)}), \dots, X(\sigma_l) \in K_1(p^{(l)}), \sigma_l \le t\}
	\end{equation*}
	for $k=n$. If $k \neq n$, let $W(n, 0, k, l, t) = \emptyset$.
\end{defn}

Roughly speaking, for $f(n) > f(k)$  $W(n, m, k, l, t)$ is an event that the process makes at least $l$ $\sigma$ jumps before time $t$, at time $\sigma_m$ the process visits the square $R_k$ and this square is the closest to zero among all squares $R_i$ visited during times $\sigma_1, \ldots, \sigma_l$ and all squares $R_i$ visited during times $\sigma_1, \ldots, \sigma_{m-1}$ are further from zero than the square $R_k$.

\begin{lem}\label{lemat_pstwo_z_gory}
	Let $n = (n_1, n_2) \in \mathbb{Z}^2$, $x = (x_1, x_2) \in R_n$, $k = (k_1, k_2) \in \mathbb{Z}^2$ and $0 < t_1 < t_2$. 
	
	(i) If $k_1 \in \{n_1 - 1, n_1, n_1 + 1\}$ and $|n_2 - k_2| \geq 2$ then
	\begin{equation*}
	P^x (X(\tau_n) \in R_k, t_1 < \tau_n < t_2) \leq \frac{C_4  (t_2 - t_1)}{|n_2 - k_2|^{1+\alpha}}.
	\end{equation*}
	
	(ii) If $k_2 \in \{n_2 - 1, n_2, n_2+1\}$ and $|n_1 - k_1| \geq 2$ then
	\begin{equation*}
	P^x (X(\tau_n) \in R_k, t_1 < \tau_n < t_2) \leq \frac{C_4  (t_2 - t_1)}{|n_1 - k_1|^{1+\alpha}}.
	\end{equation*}
\end{lem}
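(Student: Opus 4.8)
The plan is to use the product structure of $X$ to reduce the statement to a one–dimensional estimate, and then to apply the one–dimensional Ikeda–Watanabe formula \eqref{IW_formula}. I will describe the argument for part (i); part (ii) is identical after interchanging the two coordinates.

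First, a geometric reduction. Since $x\in R_n\subset A_n$, at time $\tau_n$ the process jumps from a point of $A_n$ to $X(\tau_n)$, and, since $X$ can only jump parallel to the coordinate axes, this jump alters exactly one coordinate. When $k_1\in\{n_1-1,n_1,n_1+1\}$ the first–coordinate range $(k_1-1,k_1]$ of $R_k$ meets $(n_1-2,n_1+1)$, but when $|n_2-k_2|\ge 2$ the second–coordinate range $(k_2-1,k_2]$ is disjoint from $(n_2-2,n_2+1)$; hence $\{X(\tau_n)\in R_k\}$ forces the exit jump to be vertical, forces $\tau_n$ to equal the exit time $\tilde\tau^{(2)}$ of $X^{(2)}$ from $I:=(n_2-2,n_2+1)$, and forces $X^{(2)}(\tilde\tau^{(2)})\in(k_2-1,k_2]$. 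Discarding the (now irrelevant) information about the first coordinate,
$$
P^x(X(\tau_n)\in R_k,\,t_1<\tau_n<t_2)\le P^{x_2}\!\bigl(X^{(2)}(\tilde\tau^{(2)})\in(k_2-1,k_2],\ t_1<\tilde\tau^{(2)}<t_2\bigr),
$$
where $x_2\in(n_2-1,n_2]$, so $\dist(x_2,\partial I)\ge 1$. The one–dimensional Ikeda–Watanabe formula rewrites the right–hand side as
$$
\int_I\Bigl(\int_{t_1}^{t_2}\tilde p_I(s,x_2,y)\,ds\Bigr)\Bigl(\int_{k_2-1}^{k_2}\frac{\calA}{|z-y|^{1+\alpha}}\,dz\Bigr)\,dy.
$$

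Next I split $I$ according to distance to $\partial I$. For $y$ at distance $\ge 1/2$ from $\partial I$, or for $y$ within $1/2$ of the endpoint of $I$ opposite to the side on which $R_k$ lies, an elementary computation using $|n_2-k_2|\ge 2$ gives $|z-y|\ge c\,|n_2-k_2|$ for every $z\in(k_2-1,k_2]$, so the inner jump integral is $\le c\,|n_2-k_2|^{-1-\alpha}$; combining this with $\int_I\int_{t_1}^{t_2}\tilde p_I(s,x_2,y)\,ds\,dy=\int_{t_1}^{t_2}P^{x_2}(\tilde\tau^{(2)}>s)\,ds\le t_2-t_1$ bounds these pieces by $c\,(t_2-t_1)|n_2-k_2|^{-1-\alpha}$. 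For $y$ within $1/2$ of the endpoint on the same side as $R_k$, one still has $|z-y|\ge c\,|n_2-k_2|$ as long as $|n_2-k_2|\ge 3$, and since then $|x_2-y|\ge 1/2$, the crude bound $\tilde p_I(s,x_2,y)\le\tilde p(s,x_2,y)$ together with $\sup_{s>0}\tilde p(s,x_2,y)=c(\alpha)/|x_2-y|$ (a consequence of scaling and boundedness of the one–dimensional stable density) gives $\int_{t_1}^{t_2}\tilde p_I(s,x_2,y)\,ds\le c\,(t_2-t_1)$, and the same bound follows.

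The only remaining piece, and the step I expect to be the main obstacle, is $y$ within $1/2$ of the endpoint of $I$ on the side of $R_k$ in the borderline case $|n_2-k_2|=2$: here $(k_2-1,k_2]$ is adjacent to $I$, arbitrarily small jumps of $X^{(2)}$ from a $y$ near $\partial I$ can land in $R_k$, and $\int_{k_2-1}^{k_2}|z-y|^{-1-\alpha}\,dz$ blows up like $\dist(y,\partial I)^{-\alpha}$. Since $|n_2-k_2|^{-1-\alpha}$ is now a fixed constant, it suffices to bound this piece by $c\,(t_2-t_1)$, which I would do via $\int_{t_1}^{t_2}\tilde p_I(s,x_2,y)\,ds\le(t_2-t_1)\sup_{s>0}\tilde p_I(s,x_2,y)$ together with the boundary decay of the Dirichlet heat kernel of the one–dimensional $\alpha$–stable process, namely $\sup_{s>0}\tilde p_I(s,x_2,y)\le c\,\dist(y,\partial I)^{\alpha/2}$ for $\dist(x_2,\partial I)\ge 1$ and $\dist(y,\partial I)\le 1/2$; the $y$–integral then becomes $\int\dist(y,\partial I)^{\alpha/2}\dist(y,\partial I)^{-\alpha}\,dy\le\int_0^{1/2}r^{-\alpha/2}\,dr<\infty$, using $\alpha/2<1$. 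This boundary–decay input is the one non-elementary ingredient; it is a standard estimate for stable processes in intervals and is equivalent to the boundedness of the density of the exit time of a one–dimensional $\alpha$–stable process from a bounded interval, started away from its endpoints. Collecting all pieces yields the asserted inequality with a constant $C_4$ depending only on $\alpha$, and part (ii) follows by the same argument with the two coordinates interchanged.
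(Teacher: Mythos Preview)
Your argument is correct and follows essentially the same route as the paper: Ikeda--Watanabe plus, in the borderline case $|n_2-k_2|=2$, the boundary decay of the one-dimensional stable Dirichlet heat kernel. The only cosmetic difference is that you first reduce to a purely one-dimensional exit problem for $X^{(2)}$ and then apply the one-dimensional Ikeda--Watanabe formula, whereas the paper applies the two-dimensional formula \eqref{IW_formula} directly and invokes the product structure $p_{A_n}(s,x,y)=\tilde p_{A_{n_1}}(s,x_1,y_1)\tilde p_{A_{n_2}}(s,x_2,y_2)$ only in the borderline step; the paper also names the precise source of the boundary estimate, namely Theorem~1.1 of \cite{CKS2010}, which gives $\tilde p_{A_{n_2}}(s,x_2,y_2)\le c\,\delta_{A_{n_2}}(y_2)^{\alpha/2}/|x_2-y_2|^{1+\alpha}$ and is exactly the input you describe as a ``standard estimate.''
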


\begin{figure}[ht]
	
	\centering
	
	\includegraphics[scale=0.1]{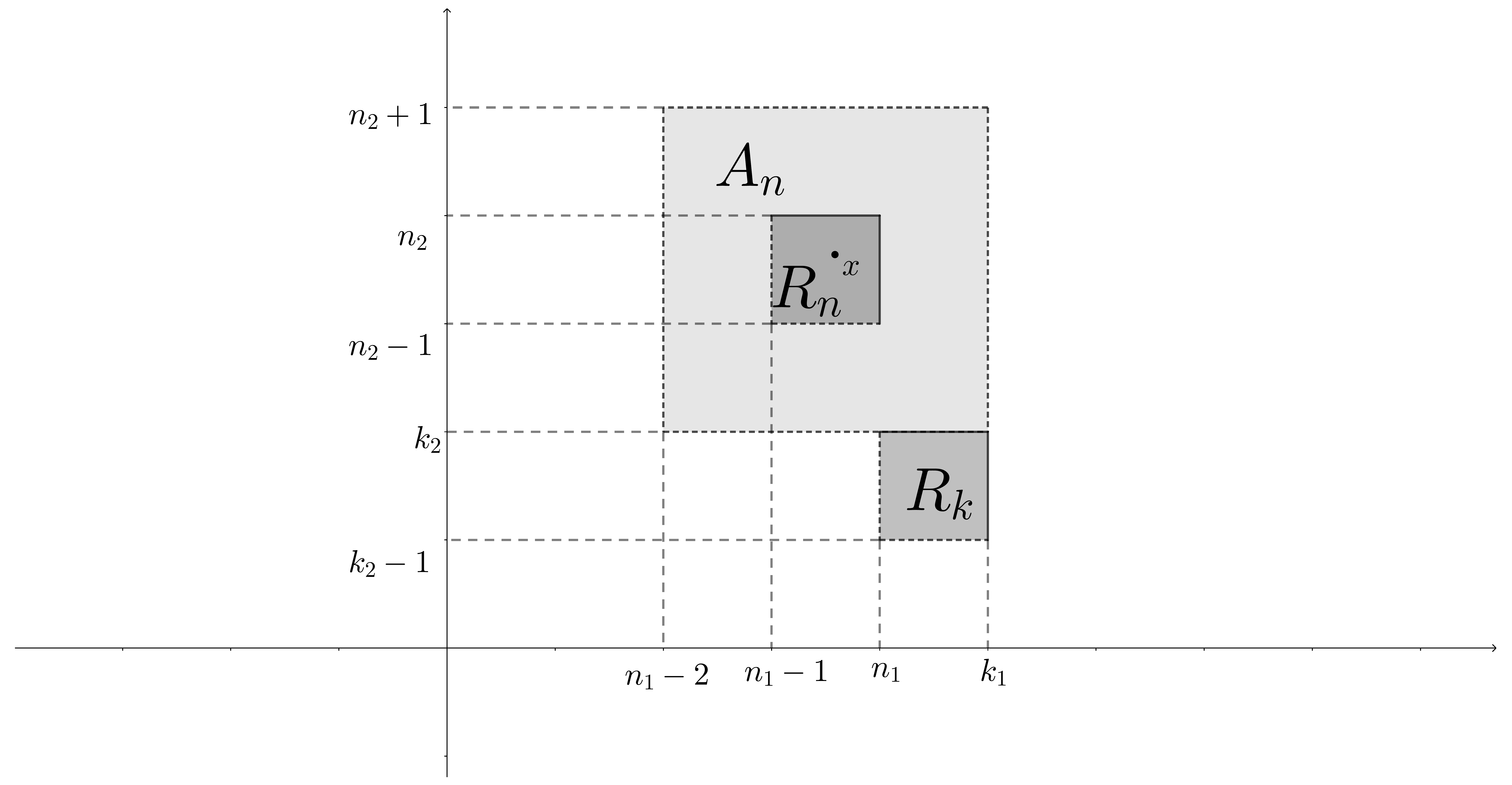}
	\caption{Examples of sets $R_n$, $A_n$ and $R_k$.}\label{rys3}
\end{figure}
\label{rysunek3}

\begin{proof} We will show (i) (the proof of (ii) is similar). 
	First assume $|n_2 - k_2| > 2$. Then, from the Ikeda-Watanabe formula we get
	\begin{eqnarray}
	\nonumber
	&& P^x (X(\tau_n) \in R_k, t_1 < \tau_n < t_2) \\
	\label{one_jump}
	&=& \int_{n_1 - 2}^{n_1 + 1} \int_{n_2 - 2}^{n_2 + 1} \int_{t_1}^{t_2} p_{A_n} (s, x, y) \, ds \int_{k_2 -1}^{k_2} \frac{\calA}{|y_2 - z_2|^{1+\alpha}} \, dz_2 dy_2 dy_1.
	\end{eqnarray}
	For $y_2 \in [n_2 - 2, n_2 + 1]$ and $z_2 \in [k_2 -1, k_2]$ we have
	\begin{equation*}
	\frac{1}{|y_2 - z_2|^{1+\alpha}} \leq \frac{1}{(|n_2 - k_2| - 2)^{1+\alpha}} \leq \frac{3^{1+\alpha}}{|n_2 - k_2|^{1+\alpha}}.
	\end{equation*}
	It follows that (\ref{one_jump}) is bounded from above by
\begin{equation*}
\frac{c_1}{|n_2 - k_2|^{1+\alpha}} \int_{t_1}^{t_2} P^x (\tau_{A_n} > s) \, ds 
\leq \frac{c_1 (t_2 - t_1)}{|n_2 - k_2|^{1+\alpha}}.
\end{equation*}
	
	Now let us consider the case $|n_2 - k_2| =  2$. We may assume that $k_2 = n_2 +2$ (for $k_2 = n_2 -2$ the estimates are similar). Moreover, we may also assume that $k_1 = n_1 -1$ (for $k_1 = n_1$ or $k_1 = n_1 + 1$ the estimates are similar).  By the Ikeda-Watanabe formula we obtain
	\begin{eqnarray}
	\nonumber
	&& P^x (X(\tau_n) \in R_k, t_1 < \tau_n < t_2)  \\
	\label{first_term}
	&=& \int_{n_1 - 2}^{n_1 + 1} \int_{n_2 - 2}^{n_2 + \frac{1}{2}} \int_{t_1}^{t_2} p_{A_n} (s, x, y) \, ds \int_{n_2 +1}^{n_2 + 2} 
	\frac{\calA}{|y_2 - z_2|^{1+\alpha}} \, dz_2 dy_2 dy_1 \\
	\label{second_term}
	&& + \int_{n_1 - 2}^{n_1 + 1} \int_{n_2 + \frac{1}{2}}^{n_2 + 1} \int_{t_1}^{t_2} p_{A_n} (s, x, y) \, ds \int_{n_2 +1}^{n_2 + 2} 
	\frac{\calA}{|y_2 - z_2|^{1+\alpha}} \, dz_2 dy_2 dy_1.
	\end{eqnarray}
	
	First we estimate (\ref{first_term}). For $y_2 \in [n_2 - 2, n_2 + 1/2]$ and $z_2 \in [n_2 +1, n_2+2]$ we have $|y_2 - z_2|^{-1-\alpha} \le 2^{1+\alpha} \le c_2 |n_2 - k_2|^{-1-\alpha}$. Hence (\ref{first_term}) is bounded from above by
	\begin{equation*}
	\frac{c_3}{|n_2 - k_2|^{1+\alpha}} \int_{t_1}^{t_2} P^x (\tau_{A_n} > s) \, ds \leq \frac{c_3 (t_2 - t_1)}{|n_2 - k_2|^{1+\alpha}}.
	\end{equation*}
	
	Now we estimate (\ref{second_term}). For $y_2 \in [n_2 +1/2, n_2 + 1)$ we have
	\begin{equation*}
	\int_{n_2 +1}^{n_2 + 2} \frac{\calA}{|y_2 - z_2|^{1+\alpha}} \, dz_2  
	= \frac{\calA}{\alpha} \left( - (n_2 + 2 - y_2)^{-\alpha} + (n_2 +1 - y_2)^{-\alpha}\right).
	\end{equation*}
	By (\ref{rectangle}) we have $p_{A_n} (s, x, y) = \tilde{p}_{A_{n_1}} (s, x_1, y_1) \tilde{p}_{A_{n_2}} (s, x_2, y_2)$, where $A_{n_1} = (n_1 - 2, n_1 + 1)$ and $A_{n_2} = (n_2 - 2, n_2 + 1)$. For $y_2 \in (n_2 +1/2, n_2 + 1)$ by Theorem 1.1 in \cite{CKS2010} we get
	\begin{equation*}
	\tilde{p}_{A_{n_2}} (s, x_2, y_2) \leq c_4 \frac{\delta_{(n_2 -2, n_2 + 1)} ^ {\frac{\alpha}{2}} (y_2)}{|x_2 - y_2|^{1+\alpha}},
	\end{equation*}
	where $\delta_{(n_2 -2, n_2 + 1)} (y_2) = \text{dist} (y_2, (n_2 -2, n_2 + 1)^c)$.
	Hence (\ref{second_term}) is bounded from above by
	\begin{equation}
	\label{second_term_a}
	 \frac{c_4 \calA}{\alpha} \int_{n_1 - 2}^{n_1 + 1} \int_{t_1}^{t_2} \tilde{p}_{A_{n_1}} (s, x_1, y_1) \int_{n_2 + \frac{1}{2}}^{n_2 + 1}  \frac{\delta_{(n_2 -2, n_2 + 1)} ^ {\frac{\alpha}{2}} (y_2)}{|x_2 - y_2|^{1+\alpha}}   (n_2 +1 - y_2)^{-\alpha} \, dy_2 ds dy_1.
	\end{equation}
	For $y_2 \in (n_2 +1/2, n_2 + 1)$ we have $\delta_{(n_2 -2, n_2 +1)} (y_2) = n_2 +1 - y_2$ and $|x_2 - y_2| \geq \frac{1}{2}$. Therefore, (\ref{second_term_a}) is bounded from above by
	\begin{eqnarray*}
	&& c_5 \int_{n_1 - 2}^{n_1 + 1} \int_{t_1}^{t_2} \tilde{p}_{A_{n_1}} (s, x_1, y_1) \int_{n_2 + \frac{1}{2}}^{n_2 + 1} 
	(n_2 +1 - y_2)^{-\frac{\alpha}{2}}  \, dy_2 ds dy_1 \\
	&=& c_6 \int_{n_1 - 2}^{n_1 + 1} \int_{t_1}^{t_2} \tilde{p}_{A_{n_1}} (s, x_1, y_1) \, ds dy_1 \\
	&\leq& c_6 (t_2 - t_1)\\
	&=& \frac{2^{1+\alpha} c_6   (t_2 - t_1)}{|n_2 - k_2|^{1+\alpha}},
	\end{eqnarray*}
which finishes the proof of the lemma.
\end{proof}

In the sequel we may assume that $N$ is large enough so that for any $n \in Z^2$ such that $A_n \cap \widetilde{R}_N = \emptyset$ we have $q(|n| - 3) \ge 1$.
\begin{lem}\label{lemat_wart_oczek_l_1}
Let $n = (n_1, n_2) \in \mathbb{Z}^2$, $A_n \cap \widetilde{R}_N = \emptyset$, $x = (x_1, x_2) \in R_n$, $k = (k_1, k_2) \in \mathbb{Z}^2$ and $0 < t$.

(i) If $k_1 \in \{n_1 - 1, n_1, n_1 + 1\}$ and $|n_2 - k_2| \geq 2$ then
	\begin{equation*}
	E^x \left(S(k, 1, t), e^{-\frac{1}{2} \int_0^{\sigma_1} V(X_s) \, ds} \right) \leq  \frac{C_5}{|n_2 - k_2|^{1+\alpha} q(|n| - 3)}.
	\end{equation*}
	
(ii) If $k_2 \in \{n_2 - 1, n_2, n_2+1\}$ and $|n_1 - k_1| \geq 2$ then
	\begin{equation*}
	E^x \left(S(k, 1, t), e^{-\frac{1}{2} \int_0^{\sigma_1} V(X_s) \, ds} \right) \leq  \frac{C_5}{|n_1 - k_1|^{1+\alpha} q(|n| - 3)}.
	\end{equation*}	
\end{lem}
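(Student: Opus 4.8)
The plan is to first notice that the $\sigma$‑jump structure collapses at the first step: if $x=(x_1,x_2)\in R_n=(n_1-1,n_1]\times(n_2-1,n_2]$ then $\lceil x_1\rceil=n_1$ and $\lceil x_2\rceil=n_2$, so $K(x)=A_n$ and hence $\sigma_1=\inf\{s\ge 0:X_s\notin K(x)\}=\tau_{A_n}=\tau_n$. Consequently $S(k,1,t)=\{X(\tau_n)\in R_k,\ \tau_n\le t\}$ and the quantity to estimate is just $E^x\big(X(\tau_n)\in R_k,\ \tau_n\le t,\ e^{-\frac12\int_0^{\tau_n}V(X_s)\,ds}\big)$, which is exactly the form handled (in the plain‑probability case) by Lemma~\ref{lemat_pstwo_z_gory}.

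Next I would replace the random weight by a deterministic one using the geometry of $A_n$. For $y\in A_n$ one has $|y-n|\le 2\sqrt2<3$, hence $|y|\ge|n|-3$; moreover $A_n\cap\widetilde{R}_N=\emptyset$ forces one coordinate of $n$ to have absolute value at least $N+1$, so $|n|\ge N+1$ and in particular $|n|-3\ge 0$ and (by the normalization of $N$ recalled just before the lemma) $q(|n|-3)\ge 1$. Since $q$ is nondecreasing and $X_s\in A_n$ for $s<\tau_n$, we get $V(X_s)=q(|X_s|)\ge q(|n|-3)$ on $[0,\tau_n)$, whence $\int_0^{\tau_n}V(X_s)\,ds\ge\tau_n\,q(|n|-3)$. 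Decomposing $\{\tau_n\le t\}$, up to a $P^x$‑null set, into the disjoint events $\{\tfrac{t}{i+1}\le\tau_n<\tfrac{t}{i}\}$, $i\ge1$, and bounding the exponential by $e^{-\frac{t}{2(i+1)}q(|n|-3)}$ on the $i$‑th of them, we obtain in case (i)
\[
E^x\Big(X(\tau_n)\in R_k,\ \tau_n\le t,\ e^{-\frac12\int_0^{\tau_n}V(X_s)\,ds}\Big)
\le\sum_{i=1}^\infty e^{-\frac{t}{2(i+1)}q(|n|-3)}\,P^x\big(X(\tau_n)\in R_k,\ \tfrac{t}{i+1}<\tau_n<\tfrac{t}{i}\big),
\]
where the half‑open windows have been replaced by open ones because $\tau_n$ has no atoms under $P^x$ (being the minimum of the atomless exit times of the two independent one‑dimensional components). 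By Lemma~\ref{lemat_pstwo_z_gory}(i), whose hypotheses are precisely those of case (i), each probability is at most $C_4\big(\tfrac{t}{i}-\tfrac{t}{i+1}\big)/|n_2-k_2|^{1+\alpha}=C_4t/\big(i(i+1)|n_2-k_2|^{1+\alpha}\big)$, and the second estimate of Lemma~\ref{series} with $r=\tfrac t2 q(|n|-3)$ gives $\sum_{i\ge1}e^{-r/(i+1)}/(i(i+1))\le 5/r=10/(t\,q(|n|-3))$. Combining these yields the assertion with $C_5=10C_4$. Part (ii) is identical, using Lemma~\ref{lemat_pstwo_z_gory}(ii) in place of (i).

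I do not expect a genuine obstacle here: all the analytic content sits in Lemmas~\ref{lemat_pstwo_z_gory} and~\ref{series}, so this lemma is essentially a packaging step. The only points requiring (routine) care are the identification $\sigma_1=\tau_n$, the elementary estimate $|y|\ge|n|-3$ on $A_n$ together with $|n|\ge N+1$ (so that $q(|n|-3)$ is well defined and $\ge 1$), and the atomlessness of $\tau_n$ used to match the strict inequalities in Lemma~\ref{lemat_pstwo_z_gory}.
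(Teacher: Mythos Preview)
Your proof is correct and follows essentially the same approach as the paper: identify $\sigma_1=\tau_n$, bound $V(X_s)\ge q(|n|-3)$ on $A_n$, decompose $\{\tau_n\le t\}$ into the dyadic windows $\{\tfrac{t}{i+1}\le\tau_n<\tfrac{t}{i}\}$, apply Lemma~\ref{lemat_pstwo_z_gory}, and sum via Lemma~\ref{series}. Your version is in fact slightly more careful than the paper's in justifying the identification $\sigma_1=\tau_n$ and in invoking atomlessness of $\tau_n$ to match the strict inequalities required by Lemma~\ref{lemat_pstwo_z_gory}.
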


\begin{proof} We will show (i) (the proof of (ii) is similar). 
	We have
	\begin{equation*}
	E^x \left(S(k, 1, t), e^{-\frac{1}{2} \int_0^{\sigma_1} V(X_s) \, ds} \right) = E^x \left(X(\tau_n) \in R_k, \tau_n < t, e^{-\frac{1}{2} \int_0^{\sigma_1} V(X_s) \, ds}\right).
	\end{equation*}
	For $X_0 = x$ we have
	\begin{equation*}
	\int_0^{\sigma_1} V(X_s) \, ds \geq \int_0^{\sigma_1} q(|n| -3) \, ds = \tau_n q(|n|-3),
	\end{equation*}
	so
	\begin{equation*}
	\begin{split}
	E^x \Big(X(\tau_n) \in R_k, \tau_n < t, &e^{-\frac{1}{2} \int_0^{\sigma_1} V(X_s) \, ds}\Big) \leq E^x \left(X(\tau_n) \in R_k, \tau_n < t, e^{-\frac{1}{2}\tau_n q(|n|-3)}\right) \\
	&= \sum_{i=1}^{\infty} E^x \left(X(\tau_n) \in R_k, \frac{t}{i+1} \leq \tau_n < \frac{t}{i}, e^{-\frac{1}{2}\tau_n q(|n|-3)}\right) \\
	&\leq \sum_{i=1}^{\infty} e^{-\frac{t}{i+1} \frac{1}{2} q(|n| - 3)}  P^x \left(X(\tau_n) \in R_k, \frac{t}{i+1} \leq \tau_n < \frac{t}{i} \right).
	\end{split}
	\end{equation*}
	From Lemma \ref{lemat_pstwo_z_gory} and then Lemma \ref{series} the expression above is bounded by
	\begin{equation*}
	\sum_{i=1}^{\infty} e^{-\frac{t}{i+1}   \frac{1}{2} q(|n| - 3)} \frac{C_4    \left( \frac{t}{i} - \frac{t}{i+1} \right)}{|n_2 - k_2|^{1+\alpha}} \leq \frac{C_5}{|n_2 - k_2|^{1+\alpha} q(|n| - 3)}.
	\end{equation*}
\end{proof}

Let
\begin{equation*}
C_6(N) = \sup_{p \in \mathbb{Z}^2 \setminus \widetilde{R}_N} \frac{1}{q(|p| - 3)}.
\end{equation*}
Later we choose $N$ large enough so that this constant is sufficiently small.

Put
$$
C_7 = \sum_{j \in \mathbb{Z} \setminus \{0\}} \frac{1}{|j|^{1+\alpha}}.
$$

Now we estimate the expression $q(|p| - 3)$ when $p = (p_1,p_2) \in \mathcal{P}$ and $p_1 = n_1 - 1$ for some $n_1 \in \mathbb{Z}$.
From (\ref{qan}), we get
	\begin{eqnarray}
	\nonumber
	q(|p| - 3) &\geq& \frac{C_0^{-3}}{2} q(|p|) + \frac{C_0^{-3}}{2} q(|p|) - c_1 \\
	\nonumber
	&\geq& \frac{C_0^{-3}}{2} q(|n_1| - 1) + \frac{C_0^{-3}}{2} q(|p|) - c_1 \\
	\nonumber
	&\geq& \frac{C_0^{-4}}{2} q(|n_1|) - \frac{C_0^{-3}}{2} + \frac{C_0^{-3}}{2} q(|p|) - c_1\\
	\label{qpn_estimate}
	&\geq& C_8 q(|n_1|) + 1,
	\end{eqnarray}
where $c_1 = 1 + C_0^{-1} + C_0^{-2}$, $C_8 = C_0^{-4}/2$ and we assumed here that $N$ is chosen large enough so that $(C_0^{-3}/2) q(|p|) - C_0^{-3}/2 - c_1 \geq 1$ for all $p \in \mathcal{P}$. 

\begin{lem}\label{lemat_wart_oczek_S}
	Let $n = (n_1, n_2) \in \mathbb{Z}^2$, $A_n \cap \widetilde{R}_N = \emptyset$ and $k = (k_1, k_2) \in \mathbb{Z}^2$. Assume $x = (x_1, x_2) \in R_n$, $0 < t$ and $l \in \mathbb{N}$, $l \geq 2$. Then we have
	\begin{eqnarray}
	\nonumber
	&&E^x \left(S(k, l, t), e^{-\frac{1}{2} \int_0^{\sigma_l} V(X_s) \, ds}\right) \\
	\label{Sklt}
	&\leq& \frac{2^{l} 3^{3l+\alpha} C_1 C_2^{l-2} C_5^l C_6(N)^{l-2}}{(|n_1 - k_1| + 1)^{1+\alpha} (|n_2 - k_2| + 1)^{1+\alpha} 
	q(|n| -3) (C_8 q(\min(|n_1|,|n_2|)) + 1)}. \quad \quad
	\end{eqnarray}
\end{lem}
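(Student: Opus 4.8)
The plan is to prove Lemma \ref{lemat_wart_oczek_S} by induction on $l$, decomposing the event $S(k,l,t)$ according to the position $R_{p}$ of the process at the penultimate $\sigma$ jump time $\sigma_{l-1}$ and using the strong Markov property at $\sigma_{l-1}$. The base of the induction ($l=2$) will be handled somewhat separately, since the constants $C_1$, $C_2$, $C_5$ appear with shifted exponents; the inductive step will reduce the case $l$ to the case $l-1$.

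\textbf{Induction step.} Suppose the bound \eqref{Sklt} (with $l$ replaced by $l-1$) holds for all starting squares $R_{n'}$ with $A_{n'} \cap \widetilde R_N = \emptyset$ and all target squares. Write
$$
S(k,l,t) = \bigcup_{p \in \mathcal P} \{X(\sigma_1) \in R_{p^{(1)}},\dots, X(\sigma_{l-1}) \in R_p,\ X(\sigma_l) \in R_k,\ \sigma_l \le t\},
$$
a disjoint union over the location $R_p$ at time $\sigma_{l-1}$ (the inner unions over $p^{(1)},\dots,p^{(l-2)} \in \mathcal P$ are absorbed into $S(p,l-1,\cdot)$). Since $V \ge 0$ and the exponential factor splits as $e^{-\frac12\int_0^{\sigma_{l-1}} V}\cdot e^{-\frac12\int_{\sigma_{l-1}}^{\sigma_l} V}$, the strong Markov property at $\sigma_{l-1}$ gives
$$
E^x\!\left(S(k,l,t), e^{-\frac12\int_0^{\sigma_l} V(X_s)\,ds}\right)
\le \sum_{p \in \mathcal P} E^x\!\left(S(p,l-1,t), e^{-\frac12\int_0^{\sigma_{l-1}} V}
\cdot g(X(\sigma_{l-1}))\right),
$$
where $g(z) = E^z\!\left(X(\tau_p) \in R_k,\ \tau_p < t,\ e^{-\frac12\int_0^{\tau_p} V(X_s)\,ds}\right)$ for $z \in R_p$. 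Now $g(z)$ is exactly the quantity estimated in Lemma \ref{lemat_wart_oczek_l_1}: if $R_k$ is reachable from $R_p$ by a single $\sigma$ jump (horizontal or vertical, which is the only possibility), then $g(z) \le C_5 / \big((|p_1-k_1|+1)^{1+\alpha}(|p_2-k_2|+1)^{1+\alpha} q(|p|-3)\big)$ — I would note that when $|p_i - k_i| \le 1$ one replaces the bound of Lemma \ref{lemat_wart_oczek_l_1} by the trivial bound $\le 1$ times the correct power, absorbing it into the constant via $3^{1+\alpha}$. Plugging in the inductive bound for $E^x(S(p,l-1,t),\dots)$ and the bound for $g$, I get a sum over $p \in \mathcal P$ of a product of four reciprocal-power factors in $|n-p|$ and $|p-k|$, a factor $1/q(|n|-3)$, a factor $1/(C_8 q(\min(|n_1|,|n_2|))+1)$, and a factor $1/q(|p|-3) \le C_6(N)$.

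\textbf{Collapsing the sum.} The key is that the sum over $p \in \mathcal P$ of
$$
\frac{1}{(|n_1-p_1|+1)^{1+\alpha}(|n_2-p_2|+1)^{1+\alpha}}\cdot\frac{1}{(|p_1-k_1|+1)^{1+\alpha}(|p_2-k_2|+1)^{1+\alpha}}
$$
factors over the two coordinates, and each one-dimensional sum is controlled by Lemma \ref{auxiliary_series}(ii): $\sum_{p_1}(|n_1-p_1|+1)^{-1-\alpha}(|p_1-k_1|+1)^{-1-\alpha} \le$ something like $C_2/(|n_1-k_1|+1)^{1+\alpha}$ (one must be slightly careful because Lemma \ref{auxiliary_series}(ii) has $|p-k|^{1+\alpha}$ rather than $(|p-k|+1)^{1+\alpha}$ in the denominator, but splitting off the finitely many terms with $|p_1-k_1|\le 1$ and using the crude bound there recovers the $(|p_1-k_1|+1)$ form at the cost of an absolute constant — this is where one of the factors $3^{3l+\alpha}$ comes from). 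Pulling out $1/q(|p|-3) \le C_6(N)$ from the sum loses one power of $C_6(N)$ (hence the exponent $l-2$ rather than $l-1$ matches up with the base case where $C_6(N)^{0}=1$), and the remaining factors $q(|n|-3)^{-1}$ and $(C_8 q(\min(|n_1|,|n_2|))+1)^{-1}$ come straight through unchanged from the inductive hypothesis. Tracking the constants $2, 3^{3l+\alpha}, C_1, C_2^{l-2}, C_5^l$ through the step (each application of Lemma \ref{auxiliary_series}(ii) contributes one $C_2$, the single $C_1$ presumably entering only at the base case via Lemma \ref{auxiliary_series}(i), each jump contributes one $C_5$) then yields \eqref{Sklt}.

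\textbf{Base case $l=2$.} Here I would decompose at $\sigma_1$, use Lemma \ref{lemat_wart_oczek_l_1} for the first jump from $R_n$ to $R_{p^{(1)}}$ (getting the factor $1/q(|n|-3)$ and a decay factor in $|n-p^{(1)}|$ of the relevant coordinate only — the other coordinate being within distance $1$), use the single-jump bound $g$ for the second jump $R_{p^{(1)}} \to R_k$, then apply Lemma \ref{auxiliary_series}(i) (which has the sharper $|n-p|^{-1-\alpha}|p-k|^{-1-\alpha}$ form needed when consecutive jumps are along \emph{different} axes, so the displacement in each coordinate is genuinely nonzero) to collapse the sum and produce $C_1/(|n_1-k_1|+1)^{1+\alpha}(|n_2-k_2|+1)^{1+\alpha}$. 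The factor $1/(C_8 q(\min(|n_1|,|n_2|))+1)$ at the base case comes from estimate \eqref{qpn_estimate}: after the first jump, the new square $R_{p^{(1)}}$ shares one coordinate (up to a shift of $1$) with $R_n$, so $q(|p^{(1)}|-3) \ge C_8 q(|n_i|)+1$ for the shared coordinate $i$; bounding this below by $C_8 q(\min(|n_1|,|n_2|))+1$ and extracting it from the sum gives the claimed factor.

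\textbf{Main obstacle.} The genuinely delicate point is the bookkeeping around Lemma \ref{auxiliary_series}: the two auxiliary series lemmas are stated with slightly different denominators ($|n-p|^{1+\alpha}$ versus $(|n-p|+1)^{1+\alpha}$), and because a $\sigma$ jump from $R_n$ lands in a square $R_k$ with \emph{one} coordinate differing by at most $1$ from $n$ and the other by at least $2$, I must keep careful track of which coordinate is the "jumped-in" one at each of the $l$ steps and which version of the auxiliary lemma applies — and handle the short-range terms ($|p_i - k_i| \le 1$) by hand to convert everything into the uniform $(|\cdot|+1)^{1+\alpha}$ form. A second subtlety is making sure the exponents of $C_1$, $C_2$, $C_5$, $C_6(N)$ in \eqref{Sklt} come out exactly as stated: this forces the induction to be organized so that the single $C_1$ (and the $C_2^{l-2}$) are "spent" precisely once at the base case and then every subsequent jump contributes exactly one $C_2$, one $C_5$, one $C_6(N)$ (the last only $l-2$ times because the final jump's $q(|p|-3)^{-1}$ is used for decay, not bounded by $C_6(N)$). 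The probabilistic structure — disjointness of the decomposition and the Markov property at $\sigma_{l-1}$ — is routine; all the work is in the constant-chasing.
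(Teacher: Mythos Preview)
Your approach is correct and essentially the same as the paper's: induction on $l$, decomposition according to the square $R_p$ containing $X(\sigma_{l-1})$, strong Markov property at $\sigma_{l-1}$, Lemma~\ref{lemat_wart_oczek_l_1} for the final jump, the inductive hypothesis for the first $l-1$ jumps, Lemma~\ref{auxiliary_series} to collapse the one-dimensional sum, and the crude bound $q(|p|-3)^{-1}\le C_6(N)$; the base case indeed uses \eqref{qpn_estimate} to extract the factor $(C_8 q(\min(|n_1|,|n_2|))+1)^{-1}$ from $q(|p^{(1)}|-3)$.

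Two minor inaccuracies in your write-up are worth flagging. First, in the inductive step the sum over admissible $p$ does not genuinely ``factor over the two coordinates'': since the last $\sigma$ jump is purely horizontal or vertical, the relevant $p$ lie in six one-dimensional strips $\mathcal P_5,\dots,\mathcal P_{10}$ (one coordinate of $p$ within distance $1$ of the corresponding coordinate of $k$), so only a \emph{single} application of Lemma~\ref{auxiliary_series}(ii) is needed per step---this is exactly why one picks up $C_2^{l-2}$ rather than $C_2^{2(l-2)}$. Second, in the base case you have the roles reversed: when the two jumps are along \emph{different} axes the intermediate square lies in a $3\times 3$ block (at most $9$ choices, no series lemma needed), whereas when both jumps are along the \emph{same} axis the intermediate coordinate ranges freely and Lemma~\ref{auxiliary_series}(i) is what collapses that sum. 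Neither point affects the validity of the strategy.
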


\begin{figure}[ht]
	
	\centering
	
	\includegraphics[scale=0.65]{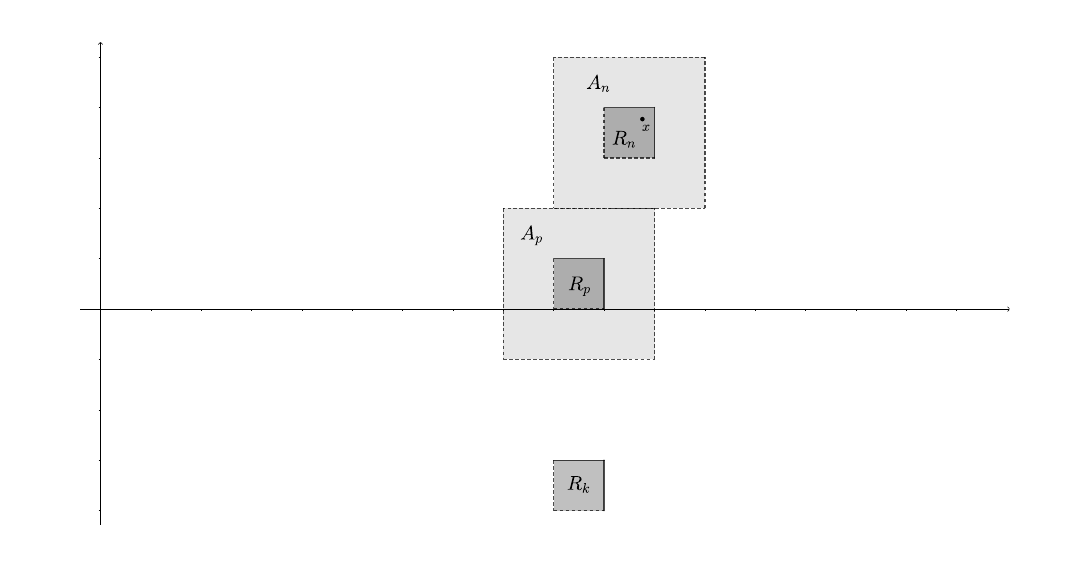}
	\caption{Squares $R_n$, $R_p$ and $R_k$ for $l=2$, both $J(\sigma_1)$, $J(\sigma_2)$ jumps are vertical.}\label{rys4}
\end{figure}
\label{rysunek4}

\begin{proof}
We use induction on $l$. 
	
First, let $l=2$. Note that either both $J(\sigma_1)$, $J(\sigma_2)$ jumps are vertical, or both $J(\sigma_1)$, $J(\sigma_2)$ jumps are horizontal, or one of them is horizontal and the other one is vertical. Hence
\begin{eqnarray*}
&& E^x \left(S(k, 2, t), e^{-\frac{1}{2} \int_0^{\sigma_2} V(X_s) \, ds}\right)\\
&=&  E^x \left(\text{$J(\sigma_1)$, $J(\sigma_2)$ are vertical}, S(k, 2, t), e^{-\frac{1}{2} \int_0^{\sigma_2} V(X_s) \, ds}\right)\\
&& + E^x \left(\text{$J(\sigma_1)$, $J(\sigma_2)$ are horizontal}, S(k, 2, t), e^{-\frac{1}{2} \int_0^{\sigma_2} V(X_s) \, ds}\right)\\
&& + E^x \left(\text{$J(\sigma_1)$ is vertical, $J(\sigma_2)$ is horizontal}, S(k, 2, t), e^{-\frac{1}{2} \int_0^{\sigma_2} V(X_s) \, ds}\right)\\
&& + E^x \left(\text{$J(\sigma_1)$ is horizontal, $J(\sigma_2)$ is vertical}, S(k, 2, t), e^{-\frac{1}{2} \int_0^{\sigma_2} V(X_s) \, ds}\right)\\
&=&  Q_1 + Q_2 + Q_3 + Q_4.
\end{eqnarray*}

Observe that $Q_1$ is equal to
$$
\sum_{p \in \mathcal{P}} E^x \left(\text{$J(\sigma_1)$, $J(\sigma_2)$ are vertical}, X(\sigma_1) \in R_p, X(\sigma_2) \in R_k, \sigma_2 < t, e^{-\frac{1}{2} \int_0^{\sigma_2} V(X_s) \, ds}\right).
$$
	
Recall that $x \in R_n$. Note that the above expected values are not zero only if $|n_1 - k_1| \leq 2$ and  $p$ belongs to one of 3 sets $\mathcal{P}_1 = \{p = (p_1, p_2) \in \mathcal{P} : p_1 = n_1 -1, p_2 \notin \{n_2 - 1, n_2, n_2 + 1, k_2 - 1, k_2, k_2 +1\} \}$, $\mathcal{P}_2 = \{p = (p_1, p_2) \in \mathcal{P} : p_1 = n_1, p_2 \notin \{n_2 - 1, n_2, n_2 + 1, k_2 - 1, k_2, k_2 + 1\} \}$, $\mathcal{P}_3 = \{p = (p_1, p_2) \in \mathcal{P} : p_1 = n_1 + 1, p_2 \notin \{n_2 - 1, n_2, n_2 + 1, k_2 - 1, k_2, k_2 + 1\}\}$. So we have
\begin{equation*}
Q_1 = \sum_{i = 1}^3 \sum_{p \in \mathcal{P}_i} E^x \left(X(\sigma_1) \in R_p, X(\sigma_2) \in R_k, \sigma_2 < t, e^{-\frac{1}{2} \int_0^{\sigma_2} V(X_s) \, ds}\right)
= \sum_{i = 1}^3 Q_{1,i}.
\end{equation*}

We have
\begin{eqnarray*}
Q_{1,1} &=& \sum_{p \in \mathcal{P}_1} E^x \left( S(p, 1, t), X(\sigma_1 + \sigma_1 \circ \theta_{\sigma_1}) \in R_k, \sigma_1 + \sigma_1 \circ \theta_{\sigma_1} < t, \right. \\
 && \left. \quad \quad e^{-\frac{1}{2} \int_0^{\sigma_1} V(X_s) \, ds} e^{-\frac{1}{2} \int_{\sigma_1}^{\sigma_2} V(X_s) \, ds}\right) \\
	&\leq& \sum_{p \in \mathcal{P}_1} E^x \left( S(p, 1, t), X(\sigma_1 + \sigma_1 \circ \theta_{\sigma_1}) \in R_k, \sigma_1 + \sigma_1 \circ \theta_{\sigma_1} < t + \sigma_1, \right. \\
	&& \left. \quad \quad e^{-\frac{1}{2} \int_0^{\sigma_1} V(X_s) \, ds} e^{-\frac{1}{2} \int_{\sigma_1}^{\sigma_2} V(X_s) \, ds} \right)\\
	&=& \sum_{p \in \mathcal{P}_1} E^x \left( S(p, 1, t), e^{-\frac{1}{2} \int_0^{\sigma_1} V(X_s) \, ds} E^{X(\sigma_1)} \left( S(k, 1, t), e^{-\frac{1}{2} \int_0^{\sigma_1} V(X_s) \, ds} \right) \right)
\end{eqnarray*}
By Lemma \ref{lemat_wart_oczek_l_1} this is bounded from above by
	\begin{eqnarray}
	\nonumber
	&& \sum_{p \in \mathcal{P}_1} E^x \left( S(p, 1, t), e^{-\frac{1}{2} \int_0^{\sigma_1} V(X_s) \, ds} \right)   \frac{C_5}{|p_2- k_2|^{1+\alpha} q(|p| - 3)}\\
	\label{sump1n}
	&\le& \sum_{p \in \mathcal{P}_1} \frac{C_5}{|n_2- p_2|^{1+\alpha} q(|n| - 3)}   \frac{C_5}{|p_2- k_2|^{1+\alpha} q(|p| - 3)}.
	\end{eqnarray}
Note that for $p \in \mathcal{P}_1$ we have $p_1 = n_1 - 1$. Using (\ref{qpn_estimate}), Lemma \ref{auxiliary_series} and the inequality $|n_1 - k_1| + 1 \leq 3$ we obtain that (\ref{sump1n}) is bounded from above by
\begin{equation*}
\frac{3^{1+\alpha} C_1 C_5^2}{(|n_1 - k_1| + 1)^{1+\alpha} (|n_2 - k_2| + 1)^{1+\alpha} q(|n|-3) (C_8  q(|n_1|) + 1)}.
\end{equation*}
Similar calculations can be repeated for $Q_{1,2}$ and $Q_{1,3}$ so we get
	\begin{equation}
	\label{pomoc_dwa_pion_z_gory}
	Q_{1} \leq \frac{3^{2+\alpha} C_1 C_5^2}{(|n_1 - k_1| + 1)^{1+\alpha} (|n_2 - k_2| + 1)^{1+\alpha} q(|n|-3) (C_8 q(\min(|n_1|,|n_2|)) + 1)}.
	\end{equation}
	
	The same estimate holds for $Q_2$. Now we estimate $Q_3$ which corresponds to the event that the $J(\sigma_1)$ jump is vertical and 
	the $J(\sigma_2)$ jump is horizontal. 
	
	\begin{figure}[ht]
		\centering
		\includegraphics[scale=0.65]{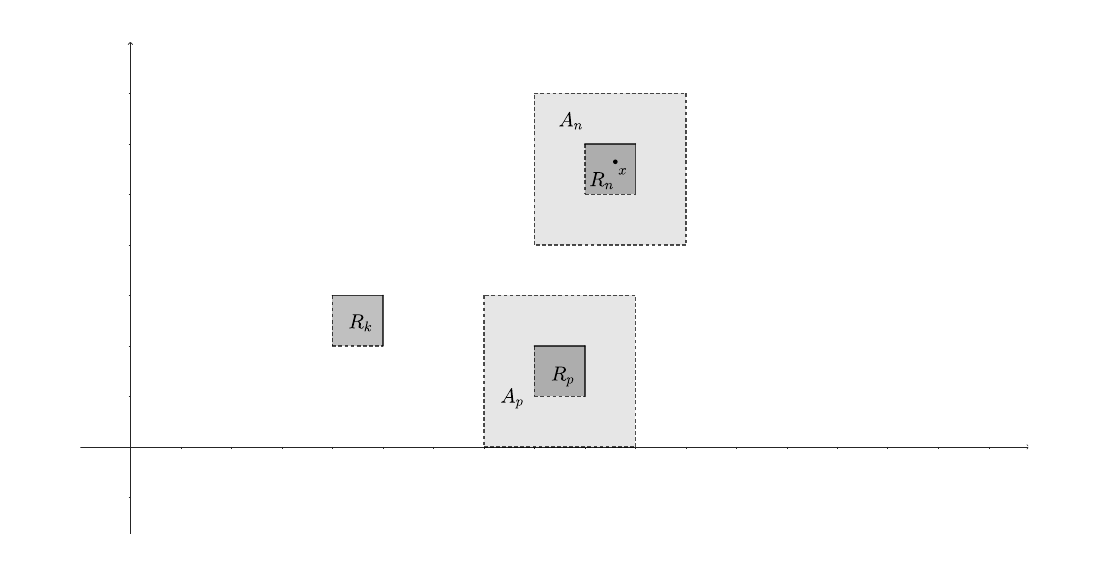}
		\caption{Squares $R_n$, $R_p$ and $R_k$ for $l=2$, $J(\sigma_1)$ jump is vertical, $J(\sigma_2)$ jump is horizontal.}\label{rys5}
	\end{figure}
	\label{rysunek5}
	
Observe that 
\begin{eqnarray*}
Q_3 &=& \sum_{p \in \mathcal{P}} E^x \left(\text{$J(\sigma_1)$ is vertical, $J(\sigma_2)$ is horizontal}, X(\sigma_1) \in R_p, X(\sigma_2) \in R_k, \right. \\
&& \left. \quad \quad \sigma_2 < t, e^{-\frac{1}{2} \int_0^{\sigma_2} V(X_s) \, ds}\right).
\end{eqnarray*}
	
Recall that $x \in R_n$. Note that the above expected values are not zero only if $|n_1 - k_1| \ge 2$, $|n_2 - k_2| \ge 2$ and  $p \in \mathcal{P}_4 = \{p = (p_1, p_2) \in \mathcal{P}: \,  n_1 -1 \leq p_1 \leq n_1 + 1, k_2 - 1 \leq p_2 \leq k_2 + 1 \}$. So we have
\begin{eqnarray*}
Q_3 &=& \sum_{p \in \mathcal{P}_4} E^x \left(X(\sigma_1) \in R_p, X(\sigma_2) \in R_k, \sigma_2 < t, e^{-\frac{1}{2} \int_0^{\sigma_2} V(X_s) \, ds}\right)\\
	&\leq&  \sum_{p \in \mathcal{P}_4} E^x \left( S(p, 1, t), X(\sigma_1 + \sigma_1 \circ \theta_{\sigma_1}) \in R_k, \sigma_1 + \sigma_1 \circ \theta_{\sigma_1} < t + \sigma_1, \right. \\
	&& \quad \quad \quad \left. e^{-\frac{1}{2} \int_0^{\sigma_1} V(X_s) \, ds} e^{- \frac{1}{2}\int_{\sigma_1}^{\sigma_2} V(X_s) \, ds}\right) \\
	&=&\sum_{p \in \mathcal{P}_4} E^x \Big(S(p, 1, t), e^{-\frac{1}{2} \int_0^{\sigma_1} V(X_s) \, ds} E^{X(\sigma_1)} \Big( S(k, 1, t), e^{- \frac{1}{2}\int_0^{\sigma_1} V(X_s) \, ds}\Big) \Big) 
\end{eqnarray*}
	
By Lemma \ref{lemat_wart_oczek_l_1} this is bounded from above by 
	\begin{eqnarray*}
	&& \sum_{p \in \mathcal{P}_4} E^x \left(S(p, 1, t), e^{-\frac{1}{2} \int_0^{\sigma_1} V(X_s) \, ds} \right)   \frac{C_5}{|p_1 - k_1|^{1+\alpha} q(|p| - 3)} \\
	&\leq& \sum_{p \in \mathcal{P}_4} \frac{C_5}{|n_2 - p_2|^{1+\alpha} q(|n| - 3)} \, \frac{C_5}{|p_1 - k_1|^{1+\alpha} q(|p| - 3)}.
	\end{eqnarray*}
	For $p = (p_1,p_2) \in \mathcal{P}_4$ we have $|n_2 - p_2|^{-1-\alpha} \le 3^{1+\alpha} (|n_2 - k_2| + 1)^{-1 - \alpha}$ and similarly $|p_1 - k_1|^{-1-\alpha} \le 3^{1+\alpha} (|n_1 - k_1| + 1)^{-1 - \alpha}$. Moreover, we have $q(|p| - 3) \geq C_8 q(\min(|n_1|,|n_2|)) + 1$, by arguments similar to (\ref{qpn_estimate}).
	Note that the set $\mathcal{P}_4$ has no more than $9$ points. Hence 
\begin{equation}
\label{pomoc_pion_poziom_z_gory}
Q_3 \leq \frac{9 C_5^2 3^{2 + 2 \alpha}}{(|n_1 - k_1| + 1)^{1+\alpha} (|n_2 - k_2| + 1)^{1+\alpha} q(|n|-3) (C_8 q(\min(|n_1|,|n_2|)) + 1)}.
\end{equation}
The same estimate holds for $Q_4$.
	
	From the inequalities \eqref{pomoc_dwa_pion_z_gory} and \eqref{pomoc_pion_poziom_z_gory}  and the same estimates for $Q_2$ and $Q_4$ we get
	\begin{equation*}
	\begin{split}
	E^x &\left(S(k, 2, t), e^{-\frac{1}{2} \int_0^{\sigma_2} V(X_s) \, ds}\right) \\
	&\leq \frac{2^2 3^{6+\alpha} C_1 C_5^2}{(|n_1 - k_1| + 1)^{1+\alpha} (|n_2 - k_2| + 1)^{1+\alpha} q(|n|-3) (C_8 q(\min(|n_1|,|n_2|)) + 1)}.
	\end{split}
	\end{equation*}
	So, the assertion of the Lemma holds for $l=2$. Recall that $C_1 \ge 1$, we can also assume that $C_5 \ge 1$.
	
	Now assume that (\ref{Sklt}) holds for $l-1$ (where $l \geq 3$), that is
	\begin{equation*}
	\begin{multlined}[t]
	E^x \left(S(k, l-1, t), e^{-\frac{1}{2} \int_0^{\sigma_{l-1}} V(X_s) \, ds}\right) \\
	\leq \frac{2^{l-1} 3^{3(l-1)+\alpha} C_1 C_2^{l-3} C_5^{l-1} C_6(N)^{l-3}}{(|n_1 - k_1| + 1)^{1+\alpha} (|n_2 - k_2| + 1)^{1+\alpha} 
	q(|n| -3) (C_8 q(\min(|n_1|,|n_2|)) + 1)}.
	\end{multlined}
	\end{equation*}
	We show that (\ref{Sklt}) holds for $l$ (where $l \geq 3$).
	
	Observe that 
\begin{eqnarray}
\nonumber
&& E^x \left(S(k, l, t), e^{-\frac{1}{2} \int_0^{\sigma_{l}} V(X_s) \, ds}\right)\\
\label{sum_p_5_10}
&=& \sum_{p \in \mathcal{P}} E^x \left(S(k, l, t), X(\sigma_{l-1}) \in R_p, e^{-\frac{1}{2} \int_0^{\sigma_{l}} V(X_s) \, ds}\right).
\end{eqnarray}
	
Note that the above expected values are not zero only if $p$ belongs to one of 6 sets $\mathcal{P}_5 = \{p = (p_1, p_2) \in \mathcal{P} : p_1 = k_1 -1, |p_2 - k_2| > 1 \}$, $\mathcal{P}_6 = \{p = (p_1, p_2) \in \mathcal{P} : p_1 = k_1, |p_2 - k_2| > 1 \}$, $\mathcal{P}_7 = \{p = (p_1, p_2) \in \mathcal{P} : p_1 = k_1 +1, |p_2 - k_2| > 1 \}$, 
$\mathcal{P}_8 = \{p = (p_1, p_2) \in \mathcal{P} : p_2 = k_2 -1, |p_1 - k_1| > 1 \}$, $\mathcal{P}_9 = \{p = (p_1, p_2) \in \mathcal{P} : p_2 = k_2, |p_1 - k_1| > 1 \}$, $\mathcal{P}_{10} = \{p = (p_1, p_2) \in \mathcal{P} : p_2 = k_2 +1, |p_1 - k_1| > 1 \}$.

Hence (\ref{sum_p_5_10}) is equal to
\begin{eqnarray}
\sum_{i = 5}^{10} \sum_{p \in \mathcal{P}_i} E^x \left(S(k, l, t), X(\sigma_{l-1}) \in R_p, e^{-\frac{1}{2} \int_0^{\sigma_{l}} V(X_s) \, ds}\right)
= \sum_{i = 5}^{10} Q_{i}.
\end{eqnarray}
	
$Q_5$ is equal to
	\begin{eqnarray*}
	 &&\sum_{p \in \mathcal{P}_5}  E^x \left(S(p, l-1, t), X(\sigma_{l-1} + \sigma_1 \circ \theta_{\sigma_{l-1}}) \in R_k, \sigma_{l-1} + \sigma_1 \circ \theta_{\sigma_{l-1}} < t, \right. \\
	&& \quad \quad \quad \left.
	e^{-\frac{1}{2} \int_0^{\sigma_{l-1}} V(X_s) \, ds} e^{- \frac{1}{2}\int_{\sigma_{l-1}}^{\sigma_l} V(X_s) \, ds}\right) \\
	&\le& \sum_{p \in \mathcal{P}_5} E^x \left(S(p, l-1, t), e^{-\frac{1}{2} \int_0^{\sigma_{l-1}} V(X_s) \, ds} E^{X(\sigma_{l-1})} \left(S(k, 1, t), e^{-\frac{1}{2} \int_0^{\sigma_1} V(X_s) \, ds} \right) \right).
	\end{eqnarray*}
Using Lemma \ref{lemat_wart_oczek_l_1} and the induction hypothesis this is bounded from above by
	\begin{eqnarray}
	\nonumber
	  && \sum_{p \in \mathcal{P}_5} \frac{2^{l-1} 3^{3(l-1)+\alpha} C_1 C_2^{l-3} C_5^l C_6(N)^{l-3}}{(|n_1 - p_1| + 1)^{1+\alpha} 
	q(|n| -3) (C_8 q(\min(|n_1|,|n_2|)) + 1) q(|p| - 3)}\\
	\label{Q5}
	&\times&
	\frac{1}{(|n_2 - p_2| + 1)^{1+\alpha} |p_2 - k_2|^{1+\alpha}} 
	\end{eqnarray}
	
	Note that for $p \in \mathcal{P}_5$ we have $p_1 = k_1 - 1$, so $(|n_1 - p_1| + 1)^{-1-\alpha} = (|n_1 - k_1 + 1| + 1)^{-1-\alpha} \leq 3^2 (|n_1 - k_1| + 1)^{-1-\alpha}$. We also have $\frac{1}{q(|p|-3)} \leq C_6(N)$. Using this and Lemma \ref{auxiliary_series} we obtain that (\ref{Q5}) is bounded from above by
$$
 \frac{2^{l-1} 3^{3(l-1)+\alpha+2} C_1 C_2^{l-2} C_5^l C_6(N)^{l-2}}{(|n_1 - k_1| + 1)^{1+\alpha} (|n_2 - k_2| + 1)^{1+\alpha} 
	q(|n| -3) (C_8 q(\min(|n_1|,|n_2|)) + 1)}.
$$
	
This gives the estimate of $Q_5$. Estimates of $Q_6$, $Q_7$, $Q_8$, $Q_9$ and $Q_{10}$ are similar. Hence, we obtain
	\begin{equation*}
	\begin{multlined}[t]
	E^x \left(S(k, l, t), e^{-\frac{1}{2} \int_0^{\sigma_l} V(X_s) \, ds}\right) \\
	\leq \frac{2^{l} 3^{3l+\alpha} C_1 C_2^{l-2} C_5^l C_6(N)^{l-2}}{(|n_1 - k_1| + 1)^{1+\alpha} (|n_2 - k_2| + 1)^{1+\alpha} q(|n| -3) 
	(C_8 q(\min(|n_1|,|n_2|)) + 1)},
	\end{multlined}
	\end{equation*}
which finishes the proof by induction.
\end{proof}

\begin{lem}\label{wniosek_S_z_fala}
	Let $n = (n_1, n_2) \in \mathbb{Z}^2$, $A_n \cap \widetilde{R}_N = \emptyset$, $t > 0$ and $x = (x_1, x_2) \in R_n$.  For $l \in \mathbb{N}$, $l \ge 2$ we have
	\begin{equation*}
	\begin{multlined}[t]
	E^x \left(\widetilde{S}(N+2, l, t), e^{-\frac{1}{2} \int_0^{\sigma_l} V(X_s) \, ds}\right) \\
	\leq \frac{2^{l} 3^{3l+\alpha} C_1 C_2^{l-2} C_5^{l} C_6(N)^{l-2} \widetilde{C}_5 (N)}{(||n_1| - N| + 1)^{1+\alpha} (||n_2| - N| + 1)^{1+\alpha} q(|n| -3) (C_8 q(\min(|n_1|,|n_2|)) + 1)}.
	\end{multlined}
	\end{equation*}
	We also have
	\begin{equation*}
	E^x \left( \widetilde{S}(N+2, 1, t), e^{-\frac{1}{2} \int_0^{\sigma_1} V(X_s) \, ds}\right) \leq \frac{\widetilde{C}_5 (N)}{(|n_2|-N)^{1+\alpha} q(|n| - 3)}
	\end{equation*}
	for $-N-2 \leq n_1 \leq N+3$ and ($n_2 > N+5$ or $n_2 < -N-4$). Similarly
	\begin{equation*}
	E^x \left( \widetilde{S}(N+2, 1, t), e^{-\frac{1}{2} \int_0^{\sigma_1} V(X_s) \, ds}\right) \leq \frac{\widetilde{C}_5 (N)}{(|n_1|-N)^{1+\alpha} q(|n| - 3)}
	\end{equation*}
	for $-N-2 \leq n_2 \leq N+3$ and ($n_1 > N+5$ or $n_1 < -N-4$).
\end{lem}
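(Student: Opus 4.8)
The plan is to reduce the estimate to the already-established Lemma~\ref{lemat_wart_oczek_S} (for $l\ge2$) and Lemma~\ref{lemat_wart_oczek_l_1} (for $l=1$) by a purely set-theoretic decomposition, so that no new probabilistic analysis of how the process enters $\widetilde R_{N+2}$ is needed. The basic observation is that $\widetilde R_{N+2}=(-N-2,N+2]\times(-N-2,N+2]$ is the disjoint union of the (half-open) squares $R_k$ with $R_k\subseteq\widetilde R_{N+2}$, i.e.\ over $k=(k_1,k_2)$ with $k_1,k_2\in\{-N-1,\dots,N+2\}$. Since the events $\widetilde S(N+2,l,t)$ and $S(k,l,t)$ differ only in the landing requirement at time $\sigma_l$ ($X(\sigma_l)\in\widetilde R_{N+2}$ versus $X(\sigma_l)\in R_k$), it follows that
\[
\widetilde S(N+2,l,t)=\bigcup_{k_1=-N-1}^{N+2}\ \bigcup_{k_2=-N-1}^{N+2} S(k,l,t),
\]
and the union is disjoint. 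Hence for $l\ge2$ the left-hand side of the claim equals $\sum_{k}E^x(S(k,l,t),e^{-\frac12\int_0^{\sigma_l}V(X_s)\,ds})$; I would bound each summand by Lemma~\ref{lemat_wart_oczek_S}, pull out the factor independent of $k$, and be left with the double sum $\Sigma:=\sum_{k_1,k_2}(|n_1-k_1|+1)^{-1-\alpha}(|n_2-k_2|+1)^{-1-\alpha}$, which factors as a product of two one-dimensional sums.

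The task then becomes the elementary estimate, for each coordinate $i$,
\[
\sum_{j=-N-1}^{N+2}\frac{1}{(|n_i-j|+1)^{1+\alpha}}\ \le\ \frac{c(N)}{(\,||n_i|-N|+1\,)^{1+\alpha}}
\]
for a suitable $c(N)$; then $\Sigma\le c(N)^2\prod_i(||n_i|-N|+1)^{-1-\alpha}$, and $\widetilde C_5(N):=c(N)^2$ gives the asserted inequality with precisely the stated constants. I would split into two cases. If $|n_i|\le N+2$, the sum is at most $1+2C_7$ (dominating by the full series over $\mathbb Z$), while $||n_i|-N|+1\le N+1$, so it suffices that $c(N)\ge(1+2C_7)(N+1)^{1+\alpha}$. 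If $|n_i|\ge N+3$, say $n_i\ge N+3$ (the other sign is symmetric), then each of the $2N+4$ summands is at most $(n_i-N-1)^{-1-\alpha}$, and $n_i-N-1\ge\frac13(||n_i|-N|+1)$, so it suffices that $c(N)\ge 3^{1+\alpha}(2N+4)$. Taking $c(N)$ to be the larger of these two quantities — a polynomial in $N$, which is harmless since $\widetilde C_5(N)$ may depend on $N$ — completes the case $l\ge2$.

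For $l=1$ I would argue directly in the configuration $-N-2\le n_1\le N+3$, $n_2>N+5$; the other three configurations follow by exchanging coordinates and/or changing signs and using part~(ii) of Lemma~\ref{lemat_wart_oczek_l_1} where appropriate. Since $n_2>N+5$, the second-coordinate band $(n_2-2,n_2+1)$ of $A_n$ lies strictly above $N+2$, so $A_n\cap\widetilde R_{N+2}=\emptyset$ and — this is the point — a horizontal $\sigma$-jump cannot carry the process into $\widetilde R_{N+2}$, as it leaves the second coordinate in that band. Hence on $\widetilde S(N+2,1,t)$ the jump $J(\sigma_1)$ is vertical, which forces $X(\sigma_1)\in R_k$ with $k_1\in\{n_1-1,n_1,n_1+1\}$, $k_2\in\{-N-1,\dots,N+2\}$, and $|n_2-k_2|\ge n_2-N-2\ge2$. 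Summing over these (at most three) values of $k_1$ and applying Lemma~\ref{lemat_wart_oczek_l_1}(i) term by term,
\[
E^x\!\left(\widetilde S(N+2,1,t),e^{-\frac12\int_0^{\sigma_1}V(X_s)\,ds}\right)\ \le\ \frac{3C_5}{q(|n|-3)}\sum_{k_2=-N-1}^{N+2}\frac{1}{(n_2-k_2)^{1+\alpha}}\ \le\ \frac{3C_5(2N+4)\,2^{1+\alpha}}{(|n_2|-N)^{1+\alpha}\,q(|n|-3)},
\]
where the last step uses $n_2-N-2\ge\frac12(n_2-N)$ (valid as $n_2\ge N+6$). Enlarging $\widetilde C_5(N)$, if needed, to dominate $3C_5(2N+4)2^{1+\alpha}$ as well finishes the proof.

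The one genuinely delicate point is the opening move: recognizing that $\widetilde S(N+2,l,t)$ is \emph{exactly} the disjoint union of the $S(k,l,t)$ over the box $\{-N-1,\dots,N+2\}^2$, so that the union bound is lossless and Lemma~\ref{lemat_wart_oczek_S} applies to every term, including the squares $R_k$ deep inside $\widetilde R_{N+2}$ that are reached by long $\sigma$-jumps — those are then accounted for automatically. Everything afterward is the routine tail estimate for $\sum_{|j|\le M}j^{-1-\alpha}$, and the polynomial-in-$N$ size of $\widetilde C_5(N)$ comes precisely from the $2N+4$ terms in the ranges $\{-N-1,\dots,N+2\}$ and, for $l=1$, from the $2N+4$ admissible values of $k_2$.
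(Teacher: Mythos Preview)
Your argument is correct. The paper omits the proof entirely, saying only that it ``follows by similar arguments as in proofs of Lemmas~\ref{lemat_wart_oczek_l_1} and~\ref{lemat_wart_oczek_S}'' and noting that the volume of $\widetilde R_{N+2}$ depends on $N$, which is why $\widetilde C_5(N)$ appears.

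The route you take is slightly different in organization: rather than rerunning the induction of Lemma~\ref{lemat_wart_oczek_S} with the larger target $\widetilde R_{N+2}$ in place of a single $R_k$, you observe the clean set-theoretic identity $\widetilde S(N+2,l,t)=\bigsqcup_{k} S(k,l,t)$ over the $(2N+4)^2$ integer points in the box and apply Lemma~\ref{lemat_wart_oczek_S} as a black box to each summand. This is more economical, since it avoids repeating the induction, and it makes transparent exactly where the $N$-dependence enters (the factor $\sum_{k}(|n_i-k_i|+1)^{-1-\alpha}$, which you bound by $c(N)(||n_i|-N|+1)^{-1-\alpha}$). Your treatment of $l=1$ --- observing that the second-coordinate band of $A_n$ lies strictly above $\widetilde R_{N+2}$ when $n_2>N+5$, so the $\sigma$-jump must be vertical --- is precisely the analogue of Lemma~\ref{lemat_wart_oczek_l_1}(i) summed over the admissible $k$, and matches the paper's intended argument. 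Both approaches yield a $\widetilde C_5(N)$ that is polynomial in $N$, which is all that is needed downstream in Lemma~\ref{lemat_z_gory}.
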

We omit the proof of this result. It follows by similar arguments as in proofs of Lemmas \ref{lemat_wart_oczek_l_1} and \ref{lemat_wart_oczek_S}. Note that the volume of $\widetilde{R}_{N+2}$ depends on $N$ so that the constant $\widetilde{C}_5 (N)$ appears on the right-hand side of the inequalities in the assertion of Lemma \ref{wniosek_S_z_fala}.

\begin{lem}\label{lemat_wart_oczek_W}
	For $n = (n_1, n_2) \in \mathbb{Z}^2$, $A_n \cap \widetilde{R}_N = \emptyset$, $k = (k_1, k_2) \in \mathcal{P}$, $x \in R_n$, $m, l \in \mathbb{N}$ such that $2 \leq m \leq l$ and $0< t$ we have
	\begin{equation*}
	\begin{multlined}[t]
	E^x \left( W(n, m, k, l, t), e^{-\frac{1}{2} \int_0^{\sigma_l} V(X_s) \, ds} \right) \\
	\leq \frac{2^{l} 3^{l+2m+\alpha} C_1 C_2^{m-2} C_5^l C_6(N)^{l-2}  C_7^{l-m}}{(|n_1 - k_1| + 1)^{1+\alpha} (|n_2 - k_2| + 1)^{1+\alpha} q(|n|-3) (C_8 q(\min(|n_1|,|n_2|)) + 1)}.
	\end{multlined}
	\end{equation*}
\end{lem}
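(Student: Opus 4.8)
The plan is to split the event $W(n,m,k,l,t)$ at the $\sigma$-jump time $\sigma_m$ by the strong Markov property: the part of the trajectory up to $\sigma_m$ will be controlled by Lemma~\ref{lemat_wart_oczek_S}, and the remaining $l-m$ $\sigma$-jumps by a crude iteration of Lemma~\ref{lemat_wart_oczek_l_1}.

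First I would observe that, since $P_{1,k}\subseteq\mathcal P$ and $K_1(p)=R_p$, the part of $W(n,m,k,l,t)$ concerning $\sigma_1,\dots,\sigma_m$ is contained in $S(k,m,t)$, while the part concerning $\sigma_{m+1},\dots,\sigma_l$ is contained in the event $G_{l-m}$ that $X$ makes at least $l-m$ further $\sigma$-jumps before time $t$, each landing in a square $R_q$ with $q\in\mathcal P$ (note $P_{2,k}\subseteq\mathcal P$). Splitting $e^{-\frac{1}{2}\int_0^{\sigma_l}V(X_s)\,ds}=e^{-\frac{1}{2}\int_0^{\sigma_m}V(X_s)\,ds}e^{-\frac{1}{2}\int_{\sigma_m}^{\sigma_l}V(X_s)\,ds}$, using $\sigma_{m+j}=\sigma_m+\sigma_j\circ\theta_{\sigma_m}$ and the fact that $\sigma_l\le t$ forces $\sigma_{l-m}\circ\theta_{\sigma_m}\le t$, the strong Markov property at $\sigma_m$ gives
\[
E^x\!\left(W(n,m,k,l,t),e^{-\frac{1}{2}\int_0^{\sigma_l}V(X_s)\,ds}\right)\le E^x\!\left(S(k,m,t),e^{-\frac{1}{2}\int_0^{\sigma_m}V(X_s)\,ds}\,g\bigl(X(\sigma_m)\bigr)\right),
\]
where $g(y)=E^y\bigl(G_{l-m},e^{-\frac{1}{2}\int_0^{\sigma_{l-m}}V(X_s)\,ds}\bigr)$; since $X(\sigma_m)\in R_k$ on $S(k,m,t)$, it suffices to bound $\sup_{y\in R_k}g(y)$.

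For the tail I would prove by induction on $j\ge 0$ that $\sup_{p\in\mathcal P}\sup_{y\in R_p}E^y\bigl(G_j,e^{-\frac{1}{2}\int_0^{\sigma_j}V(X_s)\,ds}\bigr)\le (6\,C_5C_6(N)C_7)^j$, the case $j=0$ being trivial. For the inductive step one conditions on the first $\sigma$-jump: a $\sigma$-jump out of $A_p$ is either vertical, landing in some $R_q$ with $q_1\in\{p_1-1,p_1,p_1+1\}$ and $|p_2-q_2|\ge 2$, or horizontal, with the symmetric description. In either case Lemma~\ref{lemat_wart_oczek_l_1}, together with $q(|p|-3)^{-1}\le C_6(N)$ (valid since $p\in\mathcal P$), bounds $E^y\bigl(S(q,1,t),e^{-\frac{1}{2}\int_0^{\sigma_1}V(X_s)\,ds}\bigr)$ by $C_5C_6(N)|p_2-q_2|^{-1-\alpha}$, respectively $C_5C_6(N)|p_1-q_1|^{-1-\alpha}$. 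Summing over the at most three admissible values of one coordinate, over the other coordinate (using that $\sum_{j\in\mathbb Z\setminus\{0\}}|j|^{-1-\alpha}=C_7$), and over the two jump directions shows the one-jump contribution is at most $2\cdot 3\,C_5C_6(N)C_7=6\,C_5C_6(N)C_7$; combining this with the strong Markov property and the induction hypothesis closes the induction. In particular $\sup_{y\in R_k}g(y)\le 6^{l-m}C_5^{l-m}C_6(N)^{l-m}C_7^{l-m}=2^{l-m}3^{l-m}C_5^{l-m}C_6(N)^{l-m}C_7^{l-m}$.

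Finally I would apply Lemma~\ref{lemat_wart_oczek_S} to $E^x\bigl(S(k,m,t),e^{-\frac{1}{2}\int_0^{\sigma_m}V(X_s)\,ds}\bigr)$ (legitimate because $m\ge 2$), multiply by the tail bound, and collect the constants: $2^m\cdot 2^{l-m}=2^l$, $3^{3m+\alpha}\cdot 3^{l-m}=3^{l+2m+\alpha}$, $C_5^m\cdot C_5^{l-m}=C_5^l$, $C_6(N)^{m-2}\cdot C_6(N)^{l-m}=C_6(N)^{l-2}$, with $C_1$ and $C_2^{m-2}$ coming from the head and $C_7^{l-m}$ from the tail. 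Because the tail bound is uniform in $y\in R_k$, the spatial factor is exactly the denominator in Lemma~\ref{lemat_wart_oczek_S}, which gives the asserted inequality. When $m=l$ there is no tail ($G_0$ is the sure event, the $j=0$ case above) and $W(n,l,k,l,t)\subseteq S(k,l,t)$, so the statement reduces to Lemma~\ref{lemat_wart_oczek_S}. I expect the main difficulty to be bookkeeping rather than analysis: one must check that the ``closer to $0$'' constraints in the definition of $W$ may be discarded for an upper bound, that the time constraint $\sigma_l\le t$ passes correctly to the shifted tail event, and that every square met along the way lies outside $\widetilde R_N$ so that Lemmas~\ref{lemat_wart_oczek_l_1} and~\ref{lemat_wart_oczek_S} apply; the remaining estimates are routine geometric-series bounds.
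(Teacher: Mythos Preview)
Your argument is correct and uses the same ingredients as the paper's proof: the base case $l=m$ is Lemma~\ref{lemat_wart_oczek_S}, and each of the remaining $l-m$ $\sigma$-jumps contributes a uniform factor $6\,C_5 C_6(N) C_7$ obtained from Lemma~\ref{lemat_wart_oczek_l_1} summed over the six admissible strips. The only organizational difference is that the paper runs the induction on $l$ by peeling off the \emph{last} jump at each step (applying the strong Markov property at $\sigma_l$ and the inductive hypothesis to $W(n,m,k,l,t)$), whereas you split once at $\sigma_m$ and package the tail into a separate inductive lemma; the arithmetic with the constants, and in particular the identities $3^{3m+\alpha}\cdot 3^{l-m}=3^{l+2m+\alpha}$ and $C_6(N)^{m-2}\cdot C_6(N)^{l-m}=C_6(N)^{l-2}$, are identical in both arrangements.
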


\begin{proof}
	The proof is by induction on $l$.
	
	First note that
	\begin{equation*}
	E^x \Big( W(n, l, k, l, t), e^{-\frac{1}{2} \int_0^{\sigma_l} V(X_s) \, ds} \Big) \leq E^x \Big( S(k, l, t), e^{-\frac{1}{2} \int_0^{\sigma_l} V(X_s) \, ds} \Big)
	\end{equation*}
	so from Lemma \ref{lemat_wart_oczek_S} it follows that
	\begin{equation*}
	\begin{multlined}[t]
	E^x \Big( W(n, l, k, l, t), e^{-\frac{1}{2} \int_0^{\sigma_l} V(X_s) \, ds} \Big) \\
	\leq \frac{2^{l} 3^{3l+\alpha} C_1 C_2^{l-2} C_5^l C_6(N)^{l-2}}{(|n_1 - k_1| + 1)^{1+\alpha} (|n_2 - k_2| + 1)^{1+\alpha} 
	q(|n| -3) (C_8 q(\min(|n_1|,|n_2|)) + 1)}.
	\end{multlined}
	\end{equation*}
	Therefore, the inequality holds for $l = m$.
	
	Assume that the inequality holds for some $l \ge m$. We show that it holds for $l+1$. Let $P_{2,k} = \{ p \in \mathcal{P} : f(p) \geq f(k) \}$ (as in the definition of $W$). By the strong Markov property we obtain
	\begin{equation*}
	\begin{split}
	& E^x \Big( W(n, m, k, l+1, t), e^{-\frac{1}{2} \int_0^{\sigma_{l+1}} V(X_s) \, ds} \Big) \\
	&= \begin{multlined}[t] \sum_{p \in P_{2,k}} E^x \Big( W(n, m, k, l, t), X(\sigma_l + \sigma_1 \circ \theta_{\sigma_l}) \in K_1 (p), \sigma_l + \sigma_1 \circ \theta_{\sigma_l} <t,\\
	e^{-\frac{1}{2} \int_0^{\sigma_{l}} V(X_s) \, ds} e^{-\frac{1}{2} \int_{\sigma_l}^{\sigma_{l+1}} V(X_s) \, ds} \Big) \end{multlined} \\
	&\leq \begin{multlined}[t] \sum_{p \in P_{2,k}} E^x \Big( W(n, m, k, l, t), X(\sigma_l + \sigma_1 \circ \theta_{\sigma_l}) \in K_1 (p), \sigma_l + \sigma_1 \circ \theta_{\sigma_l} <t + \sigma_l,\\
	e^{-\frac{1}{2} \int_0^{\sigma_{l}} V(X_s) \, ds} e^{-\frac{1}{2} \int_{\sigma_l}^{\sigma_{l+1}} V(X_s) \, ds} \Big) \end{multlined} \\
	&= E^x \Big( W(n, m, k, l, t), e^{-\frac{1}{2} \int_0^{\sigma_{l}} V(X_s) \, ds} \sum_{p \in P_{2,k}} E^{X(\sigma_l)} \left( S(p, 1, t), e^{-\frac{1}{2} \int_0^{\sigma_1} V(X_s) \, ds}\right) \Big).
	\end{split}
	\end{equation*}
	
We denote $X(\sigma_l) = (X(\sigma_l)^{(1)}, X(\sigma_l)^{(2)})$. For $i \in \{-1,0,1\}$ put $\widetilde{\mathcal{P}}_i = \{p = (p_1, p_2) \in P_{2,k}: p_1 = \lceil X(\sigma_l)^{(1)} \rceil - i \quad \text{and} \quad |p_2 - \lceil X(\sigma_l)^{(2)} \rceil | > 1 \}$ and  $\widetilde{\widetilde{\mathcal{P}}}_i = \{p = (p_1, p_2) \in P_{2,k}: p_2 = \lceil X(\sigma_l)^{(2)} \rceil - i \quad \text{and} \quad |p_1 - \lceil X(\sigma_l)^{(1)} \rceil| > 1 \}$. Note that $S(p, 1, t)$ is not an empty set if $p \in \widetilde{\mathcal{P}}_i$ or $p \in \widetilde{\widetilde{\mathcal{P}}}_i$ for some $i \in \{-1,0,1\}$.

Note that $(\lceil X(\sigma_l)^{(1)} \rceil, \lceil X(\sigma_l)^{(2)} \rceil) \in \mathcal{P}$. Hence, by Lemma \ref{lemat_wart_oczek_l_1} we get 
	\begin{equation*}
	\begin{split}
	&\sum_{p \in \widetilde{\mathcal{P}}_{-1}} E^{X(\sigma_l)} \Big( S(p, 1, t), e^{-\frac{1}{2} \int_0^{\sigma_1} V(X_s) \, ds}\Big) \\
	&\leq \sum_{p \in \widetilde{\mathcal{P}}_{-1}} \frac{C_5}{(\lceil X(\sigma_l)^{(2)} \rceil - p_2)^{1+\alpha} 
	q(|(\lceil X(\sigma_l)^{(1)} \rceil, \lceil X(\sigma_l)^{(2)} \rceil )| - 3)} \\
	&\leq C_5 C_6(N) \sum_{p \in \widetilde{\mathcal{P}}_{-1}} \frac{1}{(\lceil X(\sigma_l)^{(2)} \rceil - p_2)^{1+\alpha}} \\
	&\leq C_5 C_6(N) C_7,
	\end{split}
	\end{equation*}
In the last inequality we used the fact that $C_7 = \sum_{j \in \mathbb{Z} \setminus \{0\}} \frac{1}{|j|^{1+\alpha}}$.
	
Using this and analogous estimates for $\widetilde{\mathcal{P}}_{0}$, $\widetilde{\mathcal{P}}_{1}$, 
$\widetilde{\widetilde{\mathcal{P}}}_{-1}$, $\widetilde{\widetilde{\mathcal{P}}}_{0}$, $\widetilde{\widetilde{\mathcal{P}}}_{1}$ we get
	\begin{eqnarray*}
	&& E^x \Big( W(n, m, k, l, t), e^{-\frac{1}{2} \int_0^{\sigma_{l}} V(X_s) \, ds} \sum_{p \in P_{2,k}} E^{X(\sigma_l)} \left( S(p, 1, t), e^{-\frac{1}{2} \int_0^{\sigma_1} V(X_s) \, ds}\right) \Big) \\
	&\leq& 	E^x \Big( W(n, m, k, l, t), e^{-\frac{1}{2} \int_0^{\sigma_{l}} V(X_s) \, ds}   6 C_5 C_6(N) C_7 \Big).
	\end{eqnarray*}
	From the induction hypothesis this is bounded by
	\begin{equation*}
	\begin{multlined}[t]
	\frac{2^{l} 3^{l+2m+\alpha} C_1 C_2^{m-2} C_5^l C_6(N)^{l-2} C_7^{l-m}   6 C_5 C_6(N) C_7}{(|n_1 - k_1| + 1)^{1+\alpha} (|n_2 - k_2| + 1)^{1+\alpha} q(|n|-3) (C_8 q(\min(|n_1|,|n_2|)) + 1)} \\
	= \frac{2^{l+1} 3^{l+2m+\alpha+1} C_1 C_2^{m-2} C_5^{l+1} C_6(N)^{l-1} C_7^{l+1-m}}{(|n_1 - k_1| + 1)^{1+\alpha} (|n_2 - k_2| + 1)^{1+\alpha} q(|n|-3) (C_8 q(\min(|n_1|,|n_2|)) + 1)},
	\end{multlined}
	\end{equation*}
which finishes the proof. 
\end{proof}

\begin{lem}
\label{lemat_wart_oczek_W_1}
	For $n = (n_1, n_2) \in \mathbb{Z}^2$, $A_n \cap \widetilde{R}_N = \emptyset$, $k = (k_1, k_2) \in \mathcal{P}$, $x \in R_n$, $l \in \mathbb{N}$ and $l \geq 2$, $0 < t$ we have
	\begin{equation*}
	\begin{multlined}[t]
	E^x \left( W(n, 1, k, l, t), e^{-\frac{1}{2} \int_0^{\sigma_l} V(X_s)\, ds} \right) \\
	\leq \frac{2^{l} 3^{l+2+\alpha} C_5^l C_6(N)^{l-2} C_7^{l-1}}{(|n_1 - k_1| + 1)^{1+\alpha} (|n_2 - k_2| + 1)^{1+\alpha} 
	q(|n|-3) (C_8 q(\min(|n_1|,|n_2|)) + 1)}.
	\end{multlined}
	\end{equation*}
\end{lem}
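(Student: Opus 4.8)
The plan is to argue by induction on $l$, following the proof of Lemma~\ref{lemat_wart_oczek_W}: the induction step peels off the last $\sigma$ jump, and the only genuinely new work is the base case $l=2$, in which the jump into the ``closest'' square $R_k$ is made directly from $R_n$ rather than from a square of $P_{1,k}$, so that no summation over intermediate squares is needed (this is why the factors $C_1$ and $C_2^{m-2}$ disappear from the bound). We may assume $f(n)>f(k)$, since otherwise $W(n,1,k,l,t)=\emptyset$ and there is nothing to prove.

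For the base case $l=2$, applying the strong Markov property at $\sigma_1$ and enlarging the event exactly as in Lemma~\ref{lemat_wart_oczek_W}, one obtains
\[
E^x\!\left(W(n,1,k,2,t),e^{-\frac12\int_0^{\sigma_2}V(X_s)\,ds}\right)\le E^x\!\left(S(k,1,t),\,e^{-\frac12\int_0^{\sigma_1}V(X_s)\,ds}\sum_{p\in P_{2,k}}E^{X(\sigma_1)}\!\left(S(p,1,t),e^{-\frac12\int_0^{\sigma_1}V(X_s)\,ds}\right)\right).
\]
On $S(k,1,t)$ we have $X(\sigma_1)\in R_k$ with $k\in\mathcal{P}$; since a single $\sigma$ jump is vertical or horizontal, the inner sum receives contributions only from indices $p$ aligned with $k$, grouped into six families, and Lemma~\ref{lemat_wart_oczek_l_1} together with the definition of $C_7$ bounds it by $6C_5C_7/q(|k|-3)$, which is constant on $S(k,1,t)$ and may be pulled out. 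For the outer expectation, a single $\sigma$ jump out of $R_n$ forces $k$ to be aligned with $n$ (otherwise $S(k,1,t)$ is impossible and the left-hand side is $0$), so Lemma~\ref{lemat_wart_oczek_l_1} gives $E^x(S(k,1,t),\cdots)\le \frac{C_5}{|n_2-k_2|^{1+\alpha}q(|n|-3)}$, or the analogous bound with the two coordinates exchanged. Finally, since $k$ is aligned with $n$, one of $|k_1|,|k_2|$ is at least $|n_1|-1$ or $|n_2|-1$, so by \eqref{qan} and the inequality $q(|k|-3)\ge1$ (valid for $N$ large, as $k\in\mathcal{P}$) one gets $q(|k|-3)\ge c\,(C_8 q(\min(|n_1|,|n_2|))+1)$ for a suitable $c>0$. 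Combining these estimates and absorbing the harmless factors ($|n_1-k_1|+1\le3$, resp. $|n_2-k_2|+1\le3$, and a power of $3/2$) into the constant $2^2 3^{4+\alpha}$ yields the claim for $l=2$.

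For the induction step, assume the inequality for some $l\ge2$. Decomposing $W(n,1,k,l+1,t)$ according to the square $R_p$, $p\in P_{2,k}$, entered at $\sigma_{l+1}$, applying the strong Markov property at $\sigma_l$ (again enlarging the event as in Lemma~\ref{lemat_wart_oczek_W}) and noting that removing the last jump leaves precisely the event $W(n,1,k,l,t)$, one obtains
\[
E^x\!\left(W(n,1,k,l+1,t),e^{-\frac12\int_0^{\sigma_{l+1}}V(X_s)\,ds}\right)\le 6C_5C_6(N)C_7\,E^x\!\left(W(n,1,k,l,t),e^{-\frac12\int_0^{\sigma_l}V(X_s)\,ds}\right),
\]
where $6C_5C_6(N)C_7$ bounds $\sum_{p\in P_{2,k}}E^{X(\sigma_l)}(S(p,1,t),\cdots)$ by Lemma~\ref{lemat_wart_oczek_l_1}, using that $(\lceil X(\sigma_l)^{(1)}\rceil,\lceil X(\sigma_l)^{(2)}\rceil)\in P_{2,k}\subset\mathcal{P}$. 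The induction hypothesis together with $2^l\cdot6=2^{l+1}\cdot3$, $3^{l+2+\alpha}\cdot3=3^{(l+1)+2+\alpha}$, $C_6(N)^{l-2}\cdot C_6(N)=C_6(N)^{(l+1)-2}$ and $C_7^{l-1}\cdot C_7=C_7^{(l+1)-1}$ then gives the bound for $l+1$, completing the induction.

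The hard part is the base case: in contrast with the case $m\ge2$, the factor $(C_8 q(\min(|n_1|,|n_2|))+1)^{-1}$ cannot be supplied by an intermediate square and has to be extracted from $q(|k|-3)$, which relies on the alignment of $R_k$ with $R_n$ imposed by the single first jump together with a careful use of condition d) in Assumptions~(A), as in \eqref{qan}--\eqref{qpn_estimate}. Everything else is bookkeeping of the numerical constants, arranged precisely so that the powers of $2$ and $3$ in the statement come out right.
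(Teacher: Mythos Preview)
Your proposal is correct and follows essentially the same approach as the paper: induction on $l$ with the base case $l=2$ handled by the strong Markov property at $\sigma_1$, bounding the inner sum over the six families via Lemma~\ref{lemat_wart_oczek_l_1} together with the lower bound $q(|k|-3)\ge C_8 q(\min(|n_1|,|n_2|))+1$ (which the paper obtains from the alignment $k_2=n_2$ and \eqref{qpn_estimate}, so your constant $c$ can in fact be taken equal to $1$), and the induction step copied verbatim from Lemma~\ref{lemat_wart_oczek_W}. The only cosmetic difference is that the paper fixes a WLOG case $k_2=n_2$, $|k_1-n_1|\ge2$ at the outset rather than carrying both aligned cases through the estimate.
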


\begin{proof}
	Note that the left side of the inequality is not zero only if $k_1 \in \{n_1 -1, n_1, n_1 + 1\}$ and $|k_2 - n_2| \ge 2$ or if $k_2 \in \{n_2 -1, n_2, n_2 + 1\}$ and $|k_1 - n_1| \ge 2$. The calculations are similar for each case. Therefore we may assume that $k_2 = n_2$ and 
	$|k_1 - n_1| \ge 2$.
	
First we will prove the lemma for $l = 2$. Using similar arguments as in the proof of Lemma \ref{lemat_wart_oczek_W} we obtain
\begin{eqnarray}
\nonumber
&& E^x \left( W(n, 1, k, 2, t), e^{-\frac{1}{2} \int_0^{\sigma_2} V(X_s)\, ds} \right) \\
\label{Wn1k1ta}
&\leq& E^x \left( W(n, 1, k, 1, t), X(\sigma_1) \in R_k, e^{-\frac{1}{2} \int_0^{\sigma_1} V(X_s)\, ds} \right. \\
\label{Wn1k1tb}
&& \quad \quad \times \left. \sum_{p \in \mathcal{P}} E^{X(\sigma_1)} \left(S(p, 1, t), e^{-\frac{1}{2} \int_0^{\sigma_1} V(X_s)\, ds} \right) \right).
\end{eqnarray}
	
Note that the above expected values are not zero only if $p$ belongs to one of 6 sets $\mathcal{P}_1^k = \{p = (p_1, p_2) \in \mathcal{P} : p_1 = k_1 -1, |p_2 - k_2| > 1 \}$, $\mathcal{P}_2^k = \{p = (p_1, p_2) \in \mathcal{P} : p_1 = k_1, |p_2 - k_2| > 1 \}$, $\mathcal{P}_3^k = \{p = (p_1, p_2) \in \mathcal{P} : p_1 = k_1 +1, |p_2 - k_2| > 1 \}$, $\mathcal{P}_4^k = \{p = (p_1, p_2) \in \mathcal{P} : p_2 = k_2 -1, |p_1 - k_1| > 1 \}$, $\mathcal{P}_5^k = \{p = (p_1, p_2) \in \mathcal{P} : p_2 = k_2, |p_1 - k_1| > 1 \}$, $\mathcal{P}_{6}^k = \{p = (p_1, p_2) \in \mathcal{P} : p_2 = k_2 +1, |p_1 - k_1| > 1 \}$.

Hence (\ref{Wn1k1ta}-\ref{Wn1k1tb}) is equal to
\begin{eqnarray*}
&& E^x \left( W(n, 1, k, 1, t), X(\sigma_1) \in R_k, e^{-\frac{1}{2} \int_0^{\sigma_1} V(X_s)\, ds} \right. \\
&& \quad \quad \times \left. \sum_{i = 1}^{6} \sum_{p \in \mathcal{P}_i^k} E^{X(\sigma_1)} \left(S(p, 1, t), e^{-\frac{1}{2} \int_0^{\sigma_1} V(X_s)\, ds} \right) \right).
\end{eqnarray*}

For $X(\sigma_1) \in R_k$ we obtain by Lemma \ref{lemat_wart_oczek_l_1}, the assumption that $k_2 = n_2$ and arguments similar to (\ref{qpn_estimate})
	\begin{eqnarray*}
	&& \sum_{p \in \mathcal{P}_1^k} E^{X(\sigma_1)} \left( S(p, 1, t), e^{-\frac{1}{2} \int_0^{\sigma_1} V(X_s)\, ds} \right) \\
	&\leq& \sum_{p \in \mathcal{P}_1^k} \frac{C_5}{|k_2 - p_2|^{1+\alpha} q(|k| -3)} \\
	&\leq& \sum_{p \in \mathcal{P}_1^k} \frac{C_5}{|k_2 - p_2|^{1+\alpha} (C_8 q(\min(|n_1|,|n_2|)) + 1)} \\
	&\leq& \frac{C_5 C_7}{C_8 q(\min(|n_1|,|n_2|)) + 1}.
	\end{eqnarray*}
Simliar estimates hold for $\mathcal{P}_2^k$, $\mathcal{P}_3^k$, $\mathcal{P}_4^k$, $\mathcal{P}_5^k$, $\mathcal{P}_6^k$. Using this and again Lemma \ref{lemat_wart_oczek_l_1} we get
	\begin{equation*}
	\begin{split}
	E^x \Big( &W(n, 1, k, 2, t), e^{-\frac{1}{2} \int_0^{\sigma_2} V(X_s)\, ds} \Big) \\
	&\leq E^x \left( W(n, 1, k, 1, t), e^{-\frac{1}{2} \int_0^{\sigma_1} V(X_s)\, ds} \right)  \frac{6C_5 C_7}{C_8 q(\min(|n_1|,|n_2|)) + 1} \\
	&\leq E^x \left( S(k, 1, t), e^{-\frac{1}{2} \int_0^{\sigma_1} V(X_s)\, ds} \right)  \frac{6C_5 C_7}{C_8 q(\min(|n_1|,|n_2|)) + 1} \\
	&\leq \frac{C_5}{q(|n| - 3) |n_1 - k_1|^{1+\alpha}}  \frac{6C_5 C_7}{C_8 q(\min(|n_1|,|n_2|)) + 1} \\
	&\leq \frac{2^1 3^{4+\alpha} C_5^2 C_7}{(|n_1 - k_1| + 1)^{1+\alpha} (|n_2 - k_2| + 1)^{1+\alpha}
	q(|n| - 3) (C_8 q(\min(|n_1|,|n_2|)) + 1)},
	\end{split}
	\end{equation*}
	so the lemma holds for $l = 2$.
	
Now we use induction. Assume that the assertion of the lemma holds for some $l$. By the same arguments as in the proof of Lemma \ref{lemat_wart_oczek_W}, we get
	\begin{equation*}
	\begin{multlined}[t]
	E^x \left( W(n, 1, k, l + 1, t), e^{-\frac{1}{2} \int_0^{\sigma_{l+1}} V(X_s)\, ds} \right) \\
	\leq E^x \Big( W(n, 1, k, l, t), e^{-\frac{1}{2} \int_0^{\sigma_{l}} V(X_s) \, ds} \Big) 6 C_5 C_6(N) C_7.
	\end{multlined}
	\end{equation*}
	From the induction hypothesis this is bounded from above by
	\begin{equation*}
	\frac{2^{l+1} 3^{l+3+\alpha} C_5^{l+1} C_6(N)^{l-1} C_7^l}{(|n_1 - k_1| + 1)^{1+\alpha} (|n_2 - k_2| + 1)^{1+\alpha} 
	q(|n|-3) (C_8 q(\min(|n_1|,|n_2|)) + 1)}
	\end{equation*}
	so the the assertion of the lemma holds for $l+1$.
\end{proof}

By Lemmas \ref{lemat_wart_oczek_W}, \ref{lemat_wart_oczek_W_1} and the fact that $C_2 \ge 1$ we easily obtain the following conlusion.
\begin{cor}\label{wniosek_szacowanie_W}
	For $n = (n_1, n_2) \in \mathbb{Z}^2$, $A_n \cap \widetilde{R}_N = \emptyset$, $k = (k_1, k_2) \in \mathcal{P}$, $x \in R_n$, $m, l \in \mathbb{N}$ such that $1 \leq m$, $2 \leq l$, $l \geq m$ and $0< t$  we have
	\begin{equation*}
	\begin{multlined}[t]
	E^x \left( W(n, m, k, l, t), e^{-\frac{1}{2} \int_0^{\sigma_l} V(X_s) \, ds} \right) \\
	\leq \frac{2^{l} 3^{l+2m+\alpha} C_1 C_2^{m-1} C_5^l C_6(N)^{l-2} C_7^{l-m}}{(|n_1 - k_1| + 1)^{1+\alpha} (|n_2 - k_2| + 1)^{1+\alpha} 
	q(|n|-3) (C_8 q(\min(|n_1|,|n_2|)) + 1)}.
	\end{multlined}
	\end{equation*}
\end{cor}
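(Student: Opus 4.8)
The plan is to dispose of the corollary by a two-case split on $m$, in each case reading the estimate directly off the relevant lemma and using the normalizations $C_1\ge 1$, $C_2\ge 1$ to reconcile the constants.

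First, suppose $m\ge 2$. Then the hypotheses $n=(n_1,n_2)\in\mathbb{Z}^2$ with $A_n\cap\widetilde{R}_N=\emptyset$, $k\in\mathcal{P}$, $x\in R_n$, $2\le m\le l$ and $t>0$ are exactly those of Lemma \ref{lemat_wart_oczek_W}, which already provides the bound with numerator $2^{l}3^{l+2m+\alpha}C_1 C_2^{m-2}C_5^{l}C_6(N)^{l-2}C_7^{l-m}$. Since $C_2\ge 1$ we have $C_2^{m-2}\le C_2^{m-1}$, so that bound is dominated by the one claimed in the corollary, all other factors being identical.

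Next, suppose $m=1$. Then $l\ge 2$ and I invoke Lemma \ref{lemat_wart_oczek_W_1}, whose numerator is $2^{l}3^{l+2+\alpha}C_5^{l}C_6(N)^{l-2}C_7^{l-1}$. At $m=1$ one has $3^{l+2+\alpha}=3^{l+2m+\alpha}$, $C_2^{m-1}=C_2^{0}=1$, and $C_1\ge 1$, so inserting the extra factor $C_1 C_2^{m-1}=C_1$ only enlarges the right-hand side; hence it is again at most the expression in the corollary. Since every admissible triple $(m,l)$ with $1\le m$, $2\le l$, $l\ge m$ falls into exactly one of these two cases, the corollary follows.

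The only point requiring care is the purely clerical one of matching the powers of $3$, $C_2$, $C_5$, $C_6(N)$, $C_7$ appearing in the two lemmas with the single formula in the corollary; there is no fresh probabilistic input, which is precisely why the statement is an immediate consequence of Lemmas \ref{lemat_wart_oczek_W} and \ref{lemat_wart_oczek_W_1}.
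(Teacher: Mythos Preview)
Your argument is correct and coincides with the paper's own justification: the corollary is obtained by splitting into the cases $m\ge 2$ (Lemma \ref{lemat_wart_oczek_W}) and $m=1$ (Lemma \ref{lemat_wart_oczek_W_1}), and then using $C_1\ge 1$, $C_2\ge 1$ to absorb the small discrepancies in the exponents of $C_1$ and $C_2$.
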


\begin{lem}\label{lemat_wart_oczek_zero}
	For $n = (n_1, n_2) \in \mathbb{Z}^2$, $A_n \cap \widetilde{R}_N = \emptyset$, $x \in R_n$, $l \in \mathbb{N}$, $l \geq 2$ and $0 < t$ we have
	\begin{equation*}
	E^x \left( W(n, 0, n, l, t), e^{-\frac{1}{2} \int_0^{\sigma_l} V(X_s) \, ds} \right) \leq \frac{2^{l} 3^{3l+\alpha} C_1 C_2^{l-2} 
	C_5^l C_6(N)^{l-2} C_7^2}{q(|n|-3) (C_8 q(\min(|n_1|,|n_2|)) + 1)}.
	\end{equation*}
\end{lem}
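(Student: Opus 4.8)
The plan is to deduce this estimate as a direct consequence of Lemma~\ref{lemat_wart_oczek_S} by summing over the position of the process at time $\sigma_l$. First I would observe that, since $K_1(p) = R_p$ and the squares $\{R_k\}_{k \in \mathbb{Z}^2}$ form a partition of $\mathbb{R}^2$, on the event $W(n, 0, n, l, t)$ the point $X(\sigma_l)$ lies in exactly one square $R_k$, and by the very definition of $W(n, 0, n, l, t)$ that index $k$ must belong to $P_{2,n}$. This yields the disjoint decomposition $W(n, 0, n, l, t) = \bigsqcup_{k \in P_{2,n}} \bigl( W(n, 0, n, l, t) \cap \{X(\sigma_l) \in R_k\} \bigr)$, so that the left-hand side of the asserted inequality equals $\sum_{k \in P_{2,n}} E^x\bigl( W(n, 0, n, l, t) \cap \{X(\sigma_l) \in R_k\}, \, e^{-\frac12 \int_0^{\sigma_l} V(X_s)\, ds} \bigr)$.

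Next, for a fixed $k \in P_{2,n}$, unwinding the definition of $W(n, 0, n, l, t)$ with $p^{(l)} = k$ shows that $W(n, 0, n, l, t) \cap \{X(\sigma_l) \in R_k\}$ is the union over $p^{(1)}, \dots, p^{(l-1)} \in P_{2,n}$ of the events $\{X(\sigma_1) \in R_{p^{(1)}}, \dots, X(\sigma_{l-1}) \in R_{p^{(l-1)}}, X(\sigma_l) \in R_k, \sigma_l \le t\}$; since $P_{2,n} \subseteq \mathcal{P}$, this is contained in $S(k, l, t)$ (Definition~\ref{def_S}). As the Feynman--Kac weight is nonnegative, monotonicity of the expectation gives $E^x\bigl( W(n, 0, n, l, t) \cap \{X(\sigma_l) \in R_k\}, \, e^{-\frac12 \int_0^{\sigma_l} V(X_s)\, ds} \bigr) \le E^x\bigl( S(k, l, t), \, e^{-\frac12 \int_0^{\sigma_l} V(X_s)\, ds} \bigr)$, and the right-hand side is bounded by Lemma~\ref{lemat_wart_oczek_S}.

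It then remains to sum the bound of Lemma~\ref{lemat_wart_oczek_S} over $k = (k_1, k_2) \in P_{2,n}$, enlarging the index set to all of $\mathbb{Z}^2$ (which only increases the sum) and factoring out the $k$-independent quantity $2^l 3^{3l+\alpha} C_1 C_2^{l-2} C_5^l C_6(N)^{l-2}$ divided by $q(|n|-3)(C_8 q(\min(|n_1|,|n_2|))+1)$. What is left is the elementary bound $\sum_{j \in \mathbb{Z}} (|j|+1)^{-1-\alpha} = 1 + 2\sum_{m \ge 2} m^{-1-\alpha} < 2 + 2\sum_{m \ge 2} m^{-1-\alpha} = C_7$, from which $\sum_{k \in \mathbb{Z}^2} (|n_1 - k_1|+1)^{-1-\alpha} (|n_2 - k_2|+1)^{-1-\alpha} \le C_7^2$, and this gives exactly the stated bound. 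I expect no real obstacle here: the lemma is essentially a bookkeeping corollary of Lemma~\ref{lemat_wart_oczek_S}, and the only points deserving care are verifying that the decomposition by terminal square is genuinely disjoint and exhaustive for $W(n, 0, n, l, t)$ together with the membership constraint $k \in P_{2,n} \subseteq \mathcal{P}$, and tracking the two factors of $C_7$ arising from the double lattice sum so as to land precisely on $C_7^2$.
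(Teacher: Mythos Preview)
Your proposal is correct and follows essentially the same approach as the paper: decompose $W(n,0,n,l,t)$ according to the terminal square $R_k$ containing $X(\sigma_l)$ (with $k\in P_{2,n}\subseteq\mathcal{P}$), bound each piece by $E^x(S(k,l,t),\,e^{-\frac12\int_0^{\sigma_l}V(X_s)\,ds})$, apply Lemma~\ref{lemat_wart_oczek_S}, and then sum the resulting product $(|n_1-k_1|+1)^{-1-\alpha}(|n_2-k_2|+1)^{-1-\alpha}$ over $k$ to produce the factor $C_7^2$. Your justification of the disjointness of the decomposition and of the inequality $\sum_{j\in\mathbb{Z}}(|j|+1)^{-1-\alpha}\le C_7$ is more explicit than the paper's, but the argument is the same.
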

\begin{proof}
	Recall $\mathcal{P}_{2,n} = \{ p \in \mathcal{P} : f(p) \geq f(n) \}$. By Lemma \ref{lemat_wart_oczek_S} we get
	\begin{eqnarray*}
	&& E^x \Big( W(n, 0, n, l, t), e^{-\frac{1}{2} \int_0^{\sigma_l} V(X_s) \, ds} \Big)\\
	&\leq& \sum_{p \in \mathcal{P}_{2,n}} E^x \left( S(p, l, t), e^{-\frac{1}{2} \int_0^{\sigma_l} V(X_s) \, ds} \right) \\
	&\leq& \sum_{p \in \mathcal{P}_{2,n}} \frac{2^{l} 3^{3l+\alpha} C_1 C_2^{l-2} C_5^l C_6(N)^{l-2}}{(|n_1 - p_1| + 1)^{1+\alpha} (|n_2 - p_2| + 1)^{1+\alpha} q(|n|-3) (C_8 q(\min(|n_1|,|n_2|)) + 1)} \\
	&\leq& \frac{2^{l} 3^{3l+\alpha} C_1 C_2^{l-2} C_5^l C_6(N)^{l-2} C_7^2}{q(|n|-3) (C_8 q(\min(|n_1|,|n_2|)) + 1)}.
	\end{eqnarray*}
\end{proof}

\begin{lem}\label{lemat_Wn0n1t}
	For $n = (n_1, n_2) \in \mathbb{Z}^2$, $A_n \cap \widetilde{R}_N = \emptyset$, $x \in R_n$, $k = n$ and $0 < t$ we have
	\begin{equation}
	\label{Wn0n1t}
	E^x \left( W(n, 0, k, 1, t), e^{-\frac{1}{2} \int_0^{\sigma_1} V(X_s) \, ds} \right) 
	\leq \frac{c}{(|n_1 - k_1| + 1)^{1+\alpha} (|n_2 - k_2| + 1)^{1+\alpha} q(|n| - 3)}.
	\end{equation}
\end{lem}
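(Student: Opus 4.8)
The plan is to reduce \eqref{Wn0n1t} to Lemma \ref{lemat_wart_oczek_l_1}. Since here $k = n$, the factor $(|n_1 - k_1| + 1)^{1+\alpha}(|n_2 - k_2| + 1)^{1+\alpha}$ equals $1$, so it suffices to prove that the left-hand side is at most $c/q(|n| - 3)$. First I would unwind the definitions: because $x \in R_n$ we have $\lceil x_1 \rceil = n_1$ and $\lceil x_2 \rceil = n_2$, hence $K(x) = A_n$ and $\sigma_1 = \tau_n$, and therefore
\[
W(n, 0, n, 1, t) = \bigcup_{p \in P_{2,n}} \{X(\sigma_1) \in R_p,\ \sigma_1 \le t\},
\]
a union of pairwise disjoint events.

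The key geometric observation is that $X$ exits $A_n$ by a single axis-parallel jump: $J(\sigma_1) = X(\sigma_1) - X(\sigma_1^-)$ is either vertical or horizontal, and $X(\sigma_1^-) \in A_n$ almost surely. If $J(\sigma_1)$ is vertical, the first coordinate of $X(\sigma_1)$ remains in $(n_1 - 2, n_1 + 1)$ while the second coordinate leaves $(n_2 - 2, n_2 + 1)$; if in addition $X(\sigma_1) \in R_p$, this forces $p_1 \in \{n_1 - 1, n_1, n_1 + 1\}$ and $|p_2 - n_2| \ge 2$, and symmetrically for a horizontal jump. Dropping the constraint $p \in P_{2,n}$ (which only enlarges the event), I obtain
\[
W(n,0,n,1,t) \subseteq \bigg(\bigcup_{\substack{p_1 \in \{n_1-1,n_1,n_1+1\}\\ |p_2-n_2|\ge 2}} S(p,1,t)\bigg) \cup \bigg(\bigcup_{\substack{p_2 \in \{n_2-1,n_2,n_2+1\}\\ |p_1-n_1|\ge 2}} S(p,1,t)\bigg).
\]

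Then I would integrate against the weight $e^{-\frac12 \int_0^{\sigma_1} V(X_s)\, ds}$; using $\mathds{1}_{W(n,0,n,1,t)} \le \sum_p \mathds{1}_{S(p,1,t)}$ and Tonelli,
\[
E^x\bigg(W(n,0,n,1,t),\ e^{-\frac12 \int_0^{\sigma_1} V(X_s)\, ds}\bigg) \le \sum_{p} E^x\bigg(S(p,1,t),\ e^{-\frac12 \int_0^{\sigma_1} V(X_s)\, ds}\bigg),
\]
the sum ranging over the two index families above. For $p$ in the first family, Lemma \ref{lemat_wart_oczek_l_1}(i) bounds the summand by $C_5/(|n_2 - p_2|^{1+\alpha} q(|n|-3))$; for $p$ in the second, Lemma \ref{lemat_wart_oczek_l_1}(ii) gives $C_5/(|n_1 - p_1|^{1+\alpha} q(|n|-3))$. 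Summing, the three admissible values of the aligned coordinate contribute a factor $3$ and the free coordinate contributes $\sum_{j \in \mathbb{Z}\setminus\{0\}} |j|^{-1-\alpha} = C_7$, so the whole sum is at most $6 C_5 C_7/q(|n|-3)$. As $k = n$, this is precisely \eqref{Wn0n1t} with $c = 6 C_5 C_7$.

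There is no real obstacle; the only step needing care is the geometric reduction in the second paragraph — verifying that one axis-parallel jump out of $A_n$ starting from $R_n$ can reach $R_p$ only when one coordinate of $p$ is aligned with $n$ and the other is at distance at least $2$, and that the resulting one-jump events $S(p,1,t)$ are disjoint, so that the union bound is lossless.
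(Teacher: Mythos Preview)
Your proof is correct and follows essentially the same approach as the paper: bound $W(n,0,n,1,t)$ by the union $\bigcup_p S(p,1,t)$, observe that the exit from $A_n$ by a single axis-parallel jump forces $p$ into one of six families (three values of the aligned coordinate times two orientations), apply Lemma~\ref{lemat_wart_oczek_l_1} to each summand, and sum using $C_7$ to obtain $6C_5C_7/q(|n|-3)$. The only cosmetic difference is that the paper organizes the six families as the sets $\mathcal{P}_1^n,\ldots,\mathcal{P}_6^n$ rather than grouping by jump direction.
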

\begin{proof}
Observe that the left hand side of (\ref{Wn0n1t}) is bounded from above by
\begin{equation}
\label{Wn0n1ta}
\sum_{p \in \mathcal{P}} E^x \left(S(p, 1, t), e^{-\frac{1}{2} \int_0^{\sigma_{1}} V(X_s) \, ds}\right)
\end{equation}
	
Note that the above expected values are not zero only if $p$ belongs to one of the 6 sets $\mathcal{P}_1^n = \{p = (p_1, p_2) \in \mathcal{P} : p_1 = n_1 -1, |p_2 - n_2| > 1 \}$, $\mathcal{P}_2^n = \{p = (p_1, p_2) \in \mathcal{P} : p_1 = n_1, |p_2 - n_2| > 1 \}$, $\mathcal{P}_3^n = \{p = (p_1, p_2) \in \mathcal{P} : p_1 = n_1 +1, |p_2 - n_2| > 1 \}$, $\mathcal{P}_4^n = \{p = (p_1, p_2) \in \mathcal{P} : p_2 = n_2 -1, |p_1 - n_1| > 1 \}$, $\mathcal{P}_5^n = \{p = (p_1, p_2) \in \mathcal{P} : p_2 = n_2, |p_1 - n_1| > 1 \}$, $\mathcal{P}_{6}^n = \{p = (p_1, p_2) \in \mathcal{P} : n_2 = n_2 +1, |p_1 - n_1| > 1 \}$.

So (\ref{Wn0n1ta}) is equal to
$$
\sum_{i =1}^6 \sum_{p \in \mathcal{P}_i^n} E^x \left(S(p, 1, t), e^{-\frac{1}{2} \int_0^{\sigma_{1}} V(X_s) \, ds}\right).
$$

By Lemma \ref{lemat_wart_oczek_l_1} we get 
$$
 \sum_{p \in \mathcal{P}_1^n} E^x \left(S(p, 1, t), e^{-\frac{1}{2} \int_0^{\sigma_{1}} V(X_s) \, ds}\right) \le \frac{C_5 C_7}{q(|n| - 3)}.
$$
Using this, the fact that $n = k$ and similar estimates for $\mathcal{P}_2^n$, $\mathcal{P}_3^n$, $\mathcal{P}_4^n$, $\mathcal{P}_5^n$, $\mathcal{P}_6^n$ we obtain that (\ref{Wn0n1ta}) is bounded from above by
$$
\frac{6 C_5 C_7}{q(|n| - 3)} = \frac{6 C_5 C_7}{(|n_1 - k_1| + 1)^{1+\alpha} (|n_2 - k_2| + 1)^{1+\alpha} q(|n| - 3)}.
$$
\end{proof}

\begin{lem}
\label{lemat_Wn1k1t}
	For $n = (n_1, n_2) \in \mathbb{Z}^2$, $A_n \cap \widetilde{R}_N = \emptyset$, $k = (k_1, k_2) \in \mathcal{P}$, $x \in R_n$, $0 < t$ we have
	\begin{equation*}
	E^x \left( W(n, 1, k, 1, t), e^{-\frac{1}{2} \int_0^{\sigma_1} V(X_s)\, ds} \right) 
	\leq \frac{c}{(|n_1 - k_1| + 1)^{1+\alpha} (|n_2 - k_2| + 1)^{1+\alpha} q(|n| - 3)}.
	\end{equation*}
\end{lem}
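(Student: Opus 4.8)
The plan is to notice that, for $x\in R_n$, one has $\sigma_1=\tau_n$, and then to unwind the definition of $W$ at $m=l=1$: the unions over $p^{(1)},\dots,p^{(m-1)}$ and over $p^{(m+1)},\dots,p^{(l)}$ are both over empty index ranges, so $W(n,1,k,1,t)=\{X(\sigma_1)\in K_1(k),\ \sigma_1\le t\}$ when $f(n)>f(k)$, and $W(n,1,k,1,t)=\emptyset$ otherwise. Since $K_1(k)=R_k$ for $k\in\mathbb{Z}^2$, in every case $W(n,1,k,1,t)\subseteq S(k,1,t)$, hence
$$
E^x\!\left(W(n,1,k,1,t),\,e^{-\frac12\int_0^{\sigma_1}V(X_s)\,ds}\right)\le E^x\!\left(S(k,1,t),\,e^{-\frac12\int_0^{\sigma_1}V(X_s)\,ds}\right)
$$
and it suffices to bound the right-hand side.

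Next I would observe that $S(k,1,t)=\{X(\tau_n)\in R_k,\ \tau_n<t\}$ has positive probability only if $R_k$ is reachable from $R_n$ in a single $\sigma$ jump. Because $X$ is a pure-jump process whose jumps move only one coordinate and because $X(\tau_n)\notin\partial A_n$ almost surely, the exit from the $3\times3$ block $A_n$ occurs by one vertical or one horizontal jump; bookkeeping on the cell occupied by the exit point, exactly as in the proof of Lemma~\ref{lemat_Wn0n1t}, shows that the event is nonempty only in one of the two cases of Lemma~\ref{lemat_wart_oczek_l_1}, namely (i) $k_1\in\{n_1-1,n_1,n_1+1\}$ with $|n_2-k_2|\ge2$, or (ii) $k_2\in\{n_2-1,n_2,n_2+1\}$ with $|n_1-k_1|\ge2$; outside these cases the right-hand side is $0$ and the inequality is trivial. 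In case (i), Lemma~\ref{lemat_wart_oczek_l_1}(i) bounds the right-hand side by $C_5\,|n_2-k_2|^{-1-\alpha}\,q(|n|-3)^{-1}$. Since $|n_1-k_1|\le1$ gives $(|n_1-k_1|+1)^{1+\alpha}\le2^{1+\alpha}$, and since $|n_2-k_2|\ge2$ gives $|n_2-k_2|+1\le\tfrac32|n_2-k_2|$, this is at most
$$
\frac{3^{1+\alpha}C_5}{(|n_1-k_1|+1)^{1+\alpha}(|n_2-k_2|+1)^{1+\alpha}q(|n|-3)};
$$
case (ii) is symmetric via Lemma~\ref{lemat_wart_oczek_l_1}(ii). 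Taking $c=3^{1+\alpha}C_5$ completes the proof.

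There is no real obstacle here: the lemma is simply the $l=1$ endpoint that Corollary~\ref{wniosek_szacowanie_W} and Lemma~\ref{lemat_wart_oczek_W_1} leave open, and, unlike the $m=0$ case of Lemma~\ref{lemat_Wn0n1t} where one must sum $E^x(S(p,1,t),\cdot)$ over all admissible $p$, here the collapsed union pins the exit cell to $R_k$, so no summation is needed and a single application of Lemma~\ref{lemat_wart_oczek_l_1} suffices. The only point demanding a moment's care is the elementary geometric step of identifying which cells $R_k$ can be reached by one $\sigma$ jump out of $A_n$, which is routine and already appears in the preceding lemmas.
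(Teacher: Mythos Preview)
Your proposal is correct and follows essentially the same approach as the paper: both reduce the $W$-event to the $S$-event via $W(n,1,k,1,t)\subseteq S(k,1,t)$, identify the two geometric cases in which $S(k,1,t)$ can be nonempty, and apply Lemma~\ref{lemat_wart_oczek_l_1}. Your version is slightly more explicit in tracking the constant, but the argument is the same.
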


\begin{proof}
	Note that the left side of the inequality is not zero only if $k_1 \in \{n_1 -1, n_1, n_1 + 1\}$ and $|k_2 - n_2| \ge 2$ or if $k_2 \in \{n_2 -1, n_2, n_2 + 1\}$ and $|k_1 - n_1| \ge 2$. Note also that
$$
E^x \left( W(n, 1, k, 1, t), e^{-\frac{1}{2} \int_0^{\sigma_1} V(X_s)\, ds} \right) \le
E^x \left( S(k,1,t), e^{-\frac{1}{2} \int_0^{\sigma_1} V(X_s)\, ds} \right).
$$
Now, the assertion of the lemma follows from Lemma \ref{lemat_wart_oczek_l_1}.
\end{proof}

Now we prove the main result of this section.

\begin{lem}\label{lemat_z_gory}
	Let $x= (x_1, x_2) \in \mathbb{R}^2$ and $0 < t$. Then
	\begin{equation*}
	T_t \mathds{1}_{\mathbb{R}^2} (x) \leq \frac{C_9 (t, N)}{(|x_1| + 1)^{1+\alpha} (|x_2| + 1)^{1+\alpha} 
	(q(\min (|x_1|, |x_2|)) + 1) (q(|x|) + 1)}.
	\end{equation*}
\end{lem}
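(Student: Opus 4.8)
The plan is to write $T_t\mathds 1_{\R^2}(x)=E^x\big(\exp(-\int_0^tV(X_s)\,ds)\big)$ and split the path space according to how the trajectory behaves relative to the central square. First, if $x$ ranges over a fixed bounded neighbourhood of the origin then $T_t\mathds 1_{\R^2}(x)\le 1$ while the right‑hand side of the asserted inequality is bounded below by a positive constant there, so the estimate holds after enlarging $C_9(t,N)$. Hence I may assume $x\in R_n$ with $A_n\cap\widetilde R_N=\emptyset$, with $N$ chosen (and then frozen) large enough that all earlier ``for $N$ large'' requirements hold and, in addition, $C_6(N)$ is small enough for the geometric series appearing below to converge. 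I then decompose the path space into the union of the events $\{\sigma_1>t\}$, the events $\widetilde S(N+2,l,t)$ ($l\ge 1$), and, for each $L\ge 1$, the event $\{\sigma_L\le t<\sigma_{L+1}\}$ intersected with $\big(\bigcup_{k:\,f(k)<f(n)}\bigcup_{m=1}^{L}W(n,m,k,L,t)\big)\cup W(n,0,n,L,t)$. Using the observation that $X(\sigma_i)\notin\widetilde R_{N+2}$ forces $\lceil X(\sigma_i)\rceil\in\mathcal P$ (the $3\times 3$ box around such a point misses $\widetilde R_N$), one checks these events cover the path space; overlaps are harmless since we only need an upper bound. Thus $T_t\mathds 1_{\R^2}(x)\le I+II+III$, where $I,II,III$ are the three corresponding expectations of $e^{-\int_0^tV(X_s)\,ds}$.

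\textbf{The easy two terms.} On $\{\sigma_1>t\}$ the process stays in $A_n$ up to time $t$, so $\int_0^tV(X_s)\,ds\ge t\,q(f(n))\ge t\,q(|n|-3)$ and $I\le e^{-t\,q(|n|-3)}$; by \eqref{qan} and \eqref{warunek_logarytmu} (which forces $q(|n|)\ge A\ln|n|$ for arbitrary $A$ once $|n|$ is large) this is smaller than any fixed negative power of $|n|$ times any fixed power of $q(|n|)$, which bounds $I$ by the desired right‑hand side (with $n$ in place of $x$). For $II$, since $\sigma_l\le t$ on $\widetilde S(N+2,l,t)$ we have $e^{-\int_0^tV}\le e^{-\frac12\int_0^{\sigma_l}V}$, so $II\le\sum_{l\ge1}E^x\big(\widetilde S(N+2,l,t),e^{-\frac12\int_0^{\sigma_l}V}\big)$; applying Lemma~\ref{wniosek_S_z_fala} (its $l\ge2$ bound together with the two $l=1$ bounds, which between them cover all non‑trivial single‑jump geometries), summing the geometric series in $l$, and using $||n_i|-N|+1\ge c(N)(|n_i|+1)$, bounds $II$ by the right‑hand side as well.

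\textbf{The main term $III$.} Fix $L$ and $k$ with $f(k)<f(n)$. On $\{\sigma_L\le t<\sigma_{L+1}\}\cap W(n,m,k,L,t)$ every square visited up to time $t$ has $f\ge f(k)$, so $|X_s|\ge f(k)$ throughout $[0,t]$ and $\int_0^tV(X_s)\,ds\ge t\,q(f(k))$; combining this with $\int_0^tV\ge\int_0^{\sigma_L}V$ through $\max\ge$ average gives $e^{-\int_0^tV}\le e^{-\frac12\int_0^{\sigma_L}V}\,e^{-\frac t2 q(f(k))}$. Hence this piece contributes at most $e^{-\frac t2 q(f(k))}\,E^x\big(W(n,m,k,L,t),e^{-\frac12\int_0^{\sigma_L}V}\big)$, which I estimate by Corollary~\ref{wniosek_szacowanie_W} (and by Lemma~\ref{lemat_Wn1k1t} when $L=1$); the $W(n,0,n,L,t)$ pieces are treated identically using Lemma~\ref{lemat_wart_oczek_zero} (Lemma~\ref{lemat_Wn0n1t} when $L=1$) and the bound $e^{-\frac t2 q(f(n))}\le c(t)(|n_1|+1)^{-1-\alpha}(|n_2|+1)^{-1-\alpha}$, again from \eqref{warunek_logarytmu}. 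I then sum: over $m\in\{1,\dots,L\}$ (a finite geometric‑type sum), over $L\ge1$ (a geometric series, convergent because $C_6(N)$ is small), and finally over all $k\in\mathbb Z^2$. For the last sum the crucial device is $q(f(k))\ge\tfrac12\big(q(|k_1|-2)+q(|k_2|-2)\big)$, which factors $e^{-\frac t2 q(f(k))}$ and reduces matters to $\sum_{j\in\mathbb Z}e^{-\frac t4 q(|j|-2)}(|n_i-j|+1)^{-1-\alpha}\le c(t)(|n_i|+1)^{-1-\alpha}$; the latter follows by splitting the sum according as $|j-n_i|$ is at most or larger than $|n_i|/2$ and invoking \eqref{warunek_logarytmu} on the tail. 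This gives $III\le c(t,N)(|n_1|+1)^{-1-\alpha}(|n_2|+1)^{-1-\alpha}q(|n|-3)^{-1}(q(\min(|n_1|,|n_2|))+1)^{-1}$.

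\textbf{Passage to $x$.} Adding $I+II+III$ and using that $x\in R_n$ (so $|x_i-n_i|<1$ and $\big||x|-|n|\big|<\sqrt2$), I replace, with the help of \eqref{qan}, the quantities $q(|n|-3)$, $q(\min(|n_1|,|n_2|))+1$ and $|n_i|+1$ by $q(|x|)+1$, $q(\min(|x_1|,|x_2|))+1$ and $|x_i|+1$ respectively, up to multiplicative constants; absorbing everything into $C_9(t,N)$ and fixing $N$ finishes the proof. I expect the main obstacle to be the three‑fold summation in $III$, and specifically the point that the factor $e^{-cq(f(k))}$ — which remains of order one exactly when the ``closest square'' $R_k$ is near the origin, that is, precisely when the jump‑kernel factors $(|n_i-k_i|+1)^{-1-\alpha}$ are of optimal size $(|n_i|+1)^{-1-\alpha}$ — can, thanks to \eqref{warunek_logarytmu}, be summed against those kernel factors so as to reproduce exactly the claimed decay; this is where the hypothesis \eqref{warunek_logarytmu} enters, consistently with the ``only if'' direction of Theorem~\ref{main}.
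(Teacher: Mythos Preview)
Your decomposition $I+II+III$ is exactly the paper's (4.9)--(4.11), and your treatment of $I$ and $II$ coincides with the paper's. For $III$ with $L\ge 2$ you correctly invoke Corollary~\ref{wniosek_szacowanie_W} and Lemma~\ref{lemat_wart_oczek_zero}, whose bounds already contain the factor $(C_8 q(\min(|n_1|,|n_2|))+1)^{-1}$; your coordinate factorisation $q(f(k))\ge\tfrac12(q(|k_1|-2)+q(|k_2|-2))$ followed by a near/far splitting of the one--dimensional sums is a legitimate variant of the paper's route, which instead uses the direct bound $e^{-c\,q(|k|)}\le (|k_1|+1)^{-1-\alpha}(|k_2|+1)^{-1-\alpha}$ and then Lemma~\ref{auxiliary_series}.

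There is, however, a real gap at $L=1$. Lemmas~\ref{lemat_Wn0n1t} and~\ref{lemat_Wn1k1t} yield only
\[
\frac{c}{(|n_1-k_1|+1)^{1+\alpha}(|n_2-k_2|+1)^{1+\alpha}\,q(|n|-3)},
\]
\emph{without} the factor $(q(\min(|n_1|,|n_2|))+1)^{-1}$, and your summation device turns $e^{-\frac t2 q(f(k))}$ into the polynomial decay $(|n_1|+1)^{-1-\alpha}(|n_2|+1)^{-1-\alpha}$ but produces no $q$--factor. Hence your bound for the $L=1$ part of $III$ is too large by exactly $(q(\min(|n_1|,|n_2|))+1)$. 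The paper closes this (see the treatment of (4.13)) by writing $e^{-x}\le e^{-x/2}\cdot\frac{2}{x}$ to extract an extra $1/q(|k|)$ from the exponential, and then observing that a \emph{single} $\sigma$--jump forces one coordinate of $k$ to lie in $\{n_i-1,n_i,n_i+1\}$, whence $q(|k|)\ge C_8\,q(\min(|n_1|,|n_2|))+1$ by~\eqref{qpn_estimate}. Insert this step (or, equivalently, strengthen your exponential bound at $k=n$ and along the cross to $e^{-\frac t2 q(f(k))}\le c(t)\,(|k_1|+1)^{-1-\alpha}(|k_2|+1)^{-1-\alpha}\,q(f(k))^{-1}$) and the argument goes through.
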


\begin{proof}
	First assume $x= (x_1, x_2) \in \mathbb{R}^2$ such that $\max(|x_1|, |x_2|) > N+5$. We will consider small $x$ later.
	
	Let $n = (n_1, n_2) \in \mathbb{Z}^2$ such that $x \in R_n$. We have
	\begin{eqnarray}
	\nonumber
	T_t \mathds{1}_{\mathbb{R}^2} (x) &=& E^x \left( e^{-\int_0^t V(X_s) \, ds} \right) \\
	\label{Tt1}
	&\leq&  \sum_{k \in \mathcal{P}} \sum_{l = 1}^{\infty} \sum_{m=0}^l E^x\left( W(n, m, k, l, t), \sigma_{l+1} > t, e^{-\int_0^t V(X_s) \, ds} \right) \\
	\label{Tt2}
	&& + \sum_{l=1}^{\infty} E^x \left( \widetilde{S} (N+2, l, t), e^{-\int_0^t V(X_s) \, ds} \right) \\
	\label{Tt3}
	&& + E^x \left( \sigma_1 > t, e^{-\int_0^t V(X_s) \, ds} \right). 
	\end{eqnarray}
	
Note that in (\ref{Tt1}) inside the expected value the following estimates hold
\begin{eqnarray}
\nonumber
e^{-\int_0^t V(X_s) \, ds} &\leq& e^{-\frac{1}{2}\int_0^t q(|k|-3) \, ds} e^{-\frac{1}{2}\int_0^{\sigma_l} V(X_s) \, ds} \\
\nonumber
&\leq& e^{-\frac{1}{2}\int_0^t \left(\frac{1}{C_0^3} q(|k|) - \frac{1}{C_0^2} -\frac{1}{C_0} -1 \right)\, ds} e^{-\frac{1}{2}\int_0^{\sigma_l} V(X_s) \, ds}\\
\label{q3}
&=& c_0(t) e^{-\frac{t}{2C_0^3} q(|k|)} e^{-\frac{1}{2}\int_0^{\sigma_l} V(X_s) \, ds}\\
\label{q4}
&\le& c_0(t) e^{-\frac{t}{4C_0^3} q(|k|)} \frac{1}{\frac{t}{4C_0^3} q(|k|)} e^{-\frac{1}{2}\int_0^{\sigma_l} V(X_s) \, ds}.
\end{eqnarray}
Here we used (\ref{qan}).
	
Hence (\ref{Tt1}) is bounded from above by
	\begin{eqnarray}
	\label{Tt1a}
 	&&\sum_{l=2}^{\infty} c_0(t) e^{-\frac{t}{2C_0^3} q(|n|)} E^x\left( W(n, 0, n, l, t), e^{-\frac{1}{2} \int_0^{\sigma_l} V(X_s) \, ds} \right) \\
	\label{Tt1b}
	&+& \sum_{k \in \mathcal{P}} c_0(t)e^{-\frac{t}{4C_0^3} q(|k|)} \frac{1}{\frac{t}{4C_0^3} q(|k|)} \sum_{m=0}^1 E^x\left( W(n, m, k, 1, t), e^{-\frac{1}{2}\int_0^{\sigma_1} V(X_s) \, ds} \right) \quad \quad\\
	\label{Tt1c}
	&+& \sum_{k \in \mathcal{P}} \sum_{l = 2}^{\infty} \sum_{m=1}^l c_0(t)e^{-\frac{t}{2C_0^3} q(|k|)} E^x\left( W(n, m, k, l, t), e^{-\frac{1}{2} \int_0^{\sigma_l} V(X_s) \, ds} \right).
	\end{eqnarray}
	
Let us choose $N$ large enough so that 
$$
C_6(N) \leq \frac{1}{2^2 3^3 C_2 C_5 C_7}.
$$ 
Recall that $C_7 \ge 1$. By Lemma \ref{lemat_wart_oczek_zero} we obtain that (\ref{Tt1a}) is bounded from above by 
	\begin{eqnarray}
	\nonumber
	&& \sum_{l=2}^{\infty} c_0(t)e^{-\frac{t}{2C_0^3} q(|n|)} \frac{2^{l} 3^{3l+\alpha} C_1 C_2^{l-2} C_5^l C_6(N)^{l-2} C_7^2}
	{q(|n|-3) (C_8 q(\min(|n_1|,|n_2|)) + 1)} \\
	\nonumber
	&\leq& \sum_{l=2}^{\infty} c_0(t)e^{-\frac{t}{4C_0^3} q(|n|)} \frac{1}{2^l}  
	\frac{2^4 3^{\alpha+6} C_1 C_5^2 C_7^{2}}{q(|n|-3) (C_8 q(\min(|n_1|,|n_2|)) + 1)} \\
	\label{Tt1aa}
	&=& c_0(t)e^{-\frac{t}{4C_0^3} q(|n|)}   \frac{c_1}{q(|n|-3) (C_8 q(\min(|n_1|,|n_2|)) + 1)}.
	\end{eqnarray}
	
$q$ satisfies \eqref{warunek_logarytmu}, so we may choose $N$ large enough, so that for any $k \in \mathcal{P}$ we have 
$q(|k|) \ge \frac{1}{C_0} q(|k|+1) -1 \geq \frac{8(1+\alpha)C_0^3}{t} \ln (|k|+1) - 1 = \frac{4(1+\alpha)C_0^3}{t} \ln ((|k|+1)^2) - 1
\geq \frac{4(1+\alpha)C_0^3}{t} \ln ((|k_1| + 1) \, (|k_2| + 1))$. Then for any $k \in \mathcal{P}$ we have
	\begin{eqnarray}
	\nonumber
	e^{-\frac{t}{4C_0^3} q(|k|)} &\leq& e^{-\frac{t}{4C_0^3} \frac{4(1+\alpha)C_0^3}{t} \ln((|k_1| + 1) (|k_2| + 1))}\\
	\nonumber
	&=& 
	e^{-(1+\alpha) \ln((|k_1| + 1) (|k_2| + 1))}\\ 
	\label{exp_estimate}
	&=& \frac{1}{(|k_1| + 1)^{1+\alpha} (|k_2| + 1)^{1+\alpha}}.
	\end{eqnarray}
	
	Note that $n \in \mathcal{P}$, so using (\ref{exp_estimate}) we obtain that (\ref{Tt1aa}) is bounded from above by
	\begin{equation*}
\frac{c_2(t)}{(|n_1|+1)^{1+\alpha} (|n_2|+1)^{1+\alpha} q(|n| - 3) (C_8 q(\min(|n_1|,|n_2|)) + 1)}.
	\end{equation*}
	
	Now we estimate (\ref{Tt1b}).
	
	$E^x\left( W(n, m, k, 1, t), e^{-\frac{1}{2}\int_0^{\sigma_1} V(X_s) \, ds} \right)$ is not zero only if $k_1 \in \{n_1 - 1, n_1, n_1 + 1\}$ or $k_2 \in \{n_2 - 1, n_2, n_2 + 1\}$. Then $q(|k|) \geq C_8 q(\min(|n_1|,|n_2|)) + 1$, so 
\begin{equation}
\label{1V_estimate}
\frac{1}{\frac{t}{4C_0^3} q(|k|)} \leq \frac{c_3(t)}{C_8 q(\min(|n_1|,|n_2|)) + 1}.
\end{equation}
Moreover, $E^x\left( W(n, 0, k, 1, t), e^{-\frac{1}{2}\int_0^{\sigma_1} V(X_s) \, ds} \right)$ is not zero only if $k = n$. Similarly,  
$E^x\left( W(n, 1, k, 1, t), e^{-\frac{1}{2}\int_0^{\sigma_1} V(X_s) \, ds} \right)$ is not zero only if $k_1 \in \{n_1 -1, n_1, n_1 + 1\}$ and $|k_2 - n_2| \ge 2$ or if $k_2 \in \{n_2 -1, n_2, n_2 + 1\}$ and $|k_1 - n_1| \ge 2$.
	
By Lemmas \ref{lemat_Wn0n1t}, \ref{lemat_Wn1k1t} and by (\ref{exp_estimate}), (\ref{1V_estimate}) we obtain that (\ref{Tt1b}) is bounded from above by
\begin{eqnarray*}
&& \sum_{k \in \mathcal{P}} 
	 \frac{c_4(t) }{(|k_1|+1)^{1+\alpha}(|k_2|+1)^{1+\alpha}(|n_1 - k_1| + 1)^{1+\alpha} (|n_2 - k_2| + 1)^{1+\alpha}} \\
&&	\quad \quad \times \frac{1}{q(|n| - 3) (C_8 q(\min(|n_1|,|n_2|)) + 1)}.
\end{eqnarray*}
By Lemma \ref{auxiliary_series} this is smaller than
$$
\frac{c_5(t)}{(|n_1|+1)^{1+\alpha} (|n_2|+1)^{1+\alpha} q(|n| - 3) (C_8 q(\min(|n_1|,|n_2|)) + 1)},
$$
which finishes estimates of (\ref{Tt1b}).
	
By Corollary \ref{wniosek_szacowanie_W} and the fact that $C_2, C_7 \ge 1$ we obtain that (\ref{Tt1c}) is bounded from above by
	\begin{equation*}
	\begin{split}
	&\sum_{k \in \mathcal{P}} \sum_{l = 2}^{\infty} \sum_{m=1}^l  \frac{c_0(t)e^{-\frac{t}{2C_0^3} q(|k|)} 2^{l} 3^{3l+\alpha} C_1 C_2^{l-1} C_5^l C_6(N)^{l-2} C_7^l}{(|n_1 - k_1| + 1)^{1+\alpha} (|n_2 - k_2| + 1)^{1+\alpha} q(|n|-3) (C_8 q(\min(|n_1|,|n_2|)) + 1)} \\
	&= \sum_{k \in \mathcal{P}} \sum_{l = 2}^{\infty} l \, \frac{c_0(t)e^{-\frac{t}{2C_0^3} q(|k|)} 2^{l} 3^{3l+\alpha} C_1 C_2^{l-1} C_5^l C_6(N)^{l-2} C_7^l}{(|n_1 - k_1| + 1)^{1+\alpha} (|n_2 - k_2| + 1)^{1+\alpha} q(|n|-3) (C_8 q(\min(|n_1|,|n_2|)) + 1)}.
	\end{split}
	\end{equation*}
	Recall $C_6(N) \leq \frac{1}{2^2 3^3 C_2 C_5 C_7}$. Using this and (\ref{exp_estimate}) we get that the expression above is smaller or equal to
	\begin{eqnarray*}
	&&\sum_{k \in \mathcal{P}} \sum_{l = 2}^{\infty} \frac{l}{2^l} \, \frac{c_0(t)e^{-\frac{t}{2C_0^3} q(|k|)} \, 2^4 \, 3^{\alpha+6} C_1 C_2 C_5^2 C_7^2}{(|n_1 - k_1| + 1)^{1+\alpha} (|n_2 - k_2| + 1)^{1+\alpha} q(|n|-3) (C_8 q(\min(|n_1|,|n_2|)) + 1)} \\
	&&\leq \sum_{k \in \mathcal{P}} 
	\frac{c_0(t) \, c_6}{(|k_1|+1)^{1+\alpha}(|k_2|+1)^{1+\alpha}(|n_1 - k_1| + 1)^{1+\alpha} (|n_2 - k_2| + 1)^{1+\alpha}} \\
	&&	\quad \quad \times \frac{1}{q(|n| - 3) (C_8 q(\min(|n_1|,|n_2|)) + 1)}.
	\end{eqnarray*}
	By Lemma \ref{auxiliary_series} this is smaller than
	\begin{equation*}
	\frac{c_7 (t)}{(|n_1| + 1)^{1 + \alpha} (|n_2| + 1)^{1 + \alpha} q(|n|-3) (C_8 q(\min(|n_1|,|n_2|)) + 1)}.
	\end{equation*}
	
	Thus we obtain that (\ref{Tt1}) is bounded from above by
	\begin{equation}\label{nier_pomoc_2}
	\frac{c_{8}(t)}{(|n_1| + 1)^{1+\alpha} (|n_2| + 1)^{1+\alpha} q(|n|-3) (C_8 q(\min(|n_1|,|n_2|)) + 1)}.
	\end{equation}
	
	Now, let us observe that (\ref{Tt2}) is smaller or equal to
	\begin{equation*}
	\sum_{l=2}^{\infty} E^x \left( \widetilde{S} (N+2, l, t), e^{-\frac{1}{2}\int_0^{\sigma_l} V(X_s) \, ds} \right)
	+ E^x \left( \widetilde{S} (N+2, 1, t), e^{-\frac{1}{2}\int_0^{\sigma_1} V(X_s) \, ds} \right)
	\end{equation*}
	
	From Lemma \ref{wniosek_S_z_fala} and the facts that $C_6(N) \leq \frac{1}{2^2 3^3 C_2 C_5 C_7}$  and $C_7 \ge 1$ we get
	\begin{eqnarray}
	\nonumber
	&&\sum_{l=2}^{\infty} E^x \left( \widetilde{S} (N+2, l, t), e^{-\frac{1}{2}\int_0^{\sigma_l} V(X_s) \, ds} \right)\\
	\nonumber
	&\leq& \sum_{l=2}^{\infty} \frac{2^{l} 3^{3l+\alpha} C_1 C_2^{l-2} C_5^{l} C_6(N)^{l-2} \widetilde{C}_5(N)}{(| |n_1| - N| + 1)^{1+\alpha} (| |n_2| - N| + 1)^{1+\alpha} q(|n| -3) (C_8 q(\min(|n_1|, |n_2|)) + 1)} \\
	\nonumber
	&\leq& \sum_{l=2}^{\infty} \frac{1}{2^l}   \frac{2^4 \, 3^{\alpha+6} C_1 C_5^2 C_7^{-l+2} \widetilde{C}_5 (N)}{(||n_1| - N| + 1)^{1+\alpha} (| |n_2| - N| + 1)^{1+\alpha} q(|n| -3) (C_8 q(\min(|n_1|,|n_2|)) + 1)} \\
	\label{S_tilde}
	&\leq& \frac{c_{9}(N)}{(| |n_1| - N| + 1)^{1+\alpha} (| |n_2| - N| + 1)^{1+\alpha} q(|n| -3) (C_8 q(\min(|n_1|,|n_2|)) + 1)}.
	\end{eqnarray}
	Note that for $|n_1| \geq 2N$
	\begin{equation}
	\label{n1N_large}
	(| |n_1| - N| + 1)^{1+\alpha} \geq \left(\frac{1}{2} |n_1| + 1\right)^{1+\alpha} \geq \frac{1}{2^{1+\alpha}} (|n_1| + 1)^{1+\alpha}
	\end{equation}
	and for $|n_1| < 2N$
	\begin{equation}
	\label{n1N_small}
	(| |n_1| - N| + 1)^{1+\alpha} \geq 1 \geq \frac{1}{(2N + 1)^{1+\alpha}} (|n_1| + 1)^{1+\alpha}.
	\end{equation}
Similar estimates hold for $(| |n_2| - N| + 1)^{1+\alpha}$. Hence, (\ref{S_tilde}) is bounded from above by
\begin{equation*}
	\frac{c_{10}(N)}{(|n_1| + 1)^{1+\alpha} (|n_2| + 1)^{1+\alpha} q(|n|-3) (C_8 q(\min(|n_1|,|n_2|)) + 1)}.
	\end{equation*}

	Now, let us estimate $E^x \left( \widetilde{S} (N+2, 1, t), e^{-\frac{1}{2}\int_0^{\sigma_1} V(X_s) \, ds} \right)$. 
	There are two cases: $-N-2 \leq n_1 \leq N+3$ or $-N-2 \leq n_2 \leq N+3$, otherwise the set $\widetilde{S} (N+2, 1, t)$ is empty. Let us assume that $-N-2 \leq n_1 \leq N+3$ (the estimates for the case $-N-2 \leq n_2 \leq N+3$ are similar). From Lemma \ref{wniosek_S_z_fala}, we get
	\begin{eqnarray}
	\nonumber
	&&E^x \Big( \widetilde{S} (N+2, 1, t), e^{-\frac{1}{2}\int_0^{\sigma_1} V(X_s) \, ds} \Big)\\
	\nonumber
	&\leq& \frac{\tilde{C}_5(N)}{(|n_2| -N)^{1+\alpha} q(|n|-3)} \\
	\label{Tt2b}
	&\leq& \frac{c_{11}(N)}{(|n_1| + 1)^{1+\alpha} (| |n_2| -N| + 1)^{1+\alpha} q(|n|-3)},
	\end{eqnarray}
	because $-N-2 \leq n_1 \leq N+3$. 
	
	Using (\ref{n1N_large}) and (\ref{n1N_small}) (for $n_2$ instead of $n_1$) and the fact that $C_8 q(\min(|n_1|,|n_2|)) + 1 \leq c(N)$ we obtain that (\ref{Tt2b}) is smaller or equal to
	\begin{equation*}
	\frac{c_{12}(N)}{(|n_1| + 1)^{1+\alpha} (|n_2| + 1)^{1+\alpha} q(|n|-3) (C_8 q(\min(|n_1|,|n_2|)) + 1)}.
	\end{equation*}
	
	Hence (\ref{Tt2}) is bounded from above by
	\begin{equation}
	\label{nier_pomoc_3}
	\frac{c_{13}(N)}{(|n_1| + 1)^{1+\alpha} (|n_2| + 1)^{1+\alpha} q(|n|-3) (C_8 q(\min(|n_1|,|n_2|)) + 1)}.
	\end{equation}
	
	Now let us estimate (\ref{Tt3}). We have
	\begin{equation*}
	E^x \left( \sigma_1 > t, e^{-\int_0^t V(X_s) \, ds} \right)  \leq E^x \left( \sigma_1 > t, e^{-\int_0^t q(|n| - 3) \, ds} \right) 
	\le e^{-t q(|n| - 3)}.
	\end{equation*}
	Note that $n \in \mathcal{P}$. By (\ref{q3}) and (\ref{exp_estimate}) we get
	\begin{equation*}
	e^{-\frac{t}{2} q(|n| -3)} \leq \frac{c_{14}(t)}{(|n_1| + 1)^{1+\alpha} (|n_2| + 1)^{1+\alpha}}.
	\end{equation*}
	By arguments similar to (\ref{qpn_estimate}) we obtain
	\begin{equation*}
	e^{-\frac{t}{2} q(|n| - 3)} \leq \frac{2}{\frac{t^2}{2^2} (q(|n| - 3))^2} \leq 
	\frac{2}{\frac{t^2}{2^2} q(|n| - 3) (C_8 q(\min(|n_1|,|n_2|)) + 1)}.
	\end{equation*}
	Thus (\ref{Tt3}) is bounded from above by
	\begin{equation}
	\label{nier_pomoc_4}
	\frac{c_{15}(t)}{(|n_1| + 1)^{1+\alpha} (|n_2| + 1)^{1+\alpha} q(|n| - 3) (C_8 q(\min(|n_1|,|n_2|)) + 1)}.
	\end{equation}
	
	Using \eqref{nier_pomoc_2}, \eqref{nier_pomoc_3} and \eqref{nier_pomoc_4}, we get
	\begin{equation*}
	T_t \mathds{1}_{\mathbb{R}^2} (x) \leq \frac{c_{16}(t, N)}{(|n_1| + 1)^{1+\alpha} (|n_2| + 1)^{1+\alpha} 
	q(|n| - 3) (C_8 q(\min(|n_1|,|n_2|)) + 1)}.
	\end{equation*}
	
	Recall $n_1 - 1 < x_1 \le n_1$ and $n_2 - 1 < x_2 \le n_2$. Therefore
	\begin{equation*}
	\frac{1}{(|n_1| + 1)^{1+\alpha} (|n_2| + 1)^{1+\alpha}} \leq \frac{c_{17}}{(|x_1| + 1)^{1+\alpha} (|x_2| + 1)^{1+\alpha}}.
	\end{equation*}
	By (\ref{qan}) we obtain
	\begin{equation*}
	q(|n|-3) \geq q(|x| - 4) \geq c_{18} q(|x|) - c_{19},
	\end{equation*}
	where $c_{18} = C_0^{-4}$ and $c_{19} = 1 + C_0^{-1} + C_0^{-2}+ C_0^{-3}$. 
	
	Recall that we assumed $\max(|x_1|, |x_2|) > N+5$. Let us choose $N$ large enough so that for $x \in \R^2$ such that $\max(|x_1|, |x_2|) > N+5$ we have $q(|x|) - \frac{2c_{19}}{c_{18}} \geq 1$. Then we have 
	\begin{equation*}
	c_{18} q(|x|) - c_{19}
	= \frac{c_{18}}{2} \left(q(|x|) + q(|x|) - \frac{2c_{19}}{c_{18}}\right) \geq \frac{c_{18}}{2} (q(|x|) + 1).
	\end{equation*}
	 Similarly
	\begin{equation*}
	C_8 q(\min(|n_1|,|n_2|)) + 1 \geq c_{20} (q(\min(|x_1|, |x_2| )) + 1).
	\end{equation*}
	
	Therefore
	\begin{equation*}
	T_t \mathds{1}_{\mathbb{R}^2} (x) \leq \frac{c_{21}(t, N)}{(|x_1| + 1)^{1+\alpha} (|x_2| + 1)^{1+\alpha} 
	(q(|x|) + 1) (q(\min(|x_1|, |x_2| )) + 1)}.
	\end{equation*}
	
	Now let us take $x =(x_1, x_2)$, such that $\max (|x_1|, |x_2|) \leq N+5$. Then
	\begin{equation*}
	T_t \mathds{1}_{\mathbb{R}^2} (x) = E^x \left( e^{-\int_0^t V(X_s) \, ds} \right) \leq 1.
	\end{equation*}
	Note that this is smaller or equal to
	$$
	\frac{c_{22}(N)}{(|x_1| + 1)^{1+\alpha} (|x_2| + 1)^{1+\alpha} (q(|x|) + 1) (q(\min(|x_1|, |x_2|)) + 1)},
	$$
	because in this case
	\begin{eqnarray*}
	&&(|x_1| + 1)^{1+\alpha} (|x_2| + 1)^{1+\alpha} (q(|x|) + 1) (q(\min\{|x_1|, |x_2| \}) + 1) \\
	&\leq& (N + 5 + 1)^{1+\alpha} (N + 5 + 1)^{1+\alpha} (q(N+5) + 1) (q(N+5) + 1).
	\end{eqnarray*}
	This finishes the proof of Lemma \ref{lemat_z_gory}.
\end{proof}

\section{Proof of the main result}
The aim of this section is to present the proof of Theorem \ref{main} using results proven in Sections 3 and 4.

Denote $e_1 = (1,0)$.
\begin{lem}
\label{TtD}
Let $D = (-2,2) \times (-2,2)$, $t > 0$. For any $x \ge 3$ we have
$$
T_t \mathds{1}_D(xe_1) \le c t x^{-1-\alpha}.
$$
\end{lem}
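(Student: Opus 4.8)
The plan is to discard the Feynman--Kac exponential (dropping it only increases the quantity), split the transition density of $X$ into its two one--dimensional factors, bound the factor in the second coordinate by $1$, and estimate the factor in the first coordinate by the standard upper bound for the free one--dimensional $\alpha$--stable density.

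First, since $V \geq 0$, the Feynman--Kac formula \eqref{FeynmanKac} gives, for $x \geq 3$,
\begin{equation*}
T_t \mathds{1}_D(xe_1) = E^{xe_1}\left( \exp\left(-\int_0^t V(X_s)\,ds\right) \mathds{1}_D(X_t) \right) \leq P^{xe_1}(X_t \in D) = \int_D p(t, xe_1, y)\,dy.
\end{equation*}
Using $p(t,u,v) = \tilde{p}(t,u_1,v_1)\tilde{p}(t,u_2,v_2)$, the identity $xe_1 = (x,0)$, and $D = (-2,2)\times(-2,2)$, the right--hand side factorizes, and I would bound the second factor by $1$ since $\tilde{p}(t,0,\cdot)$ is a probability density on $\R$:
\begin{equation*}
\int_D p(t, xe_1, y)\,dy = \left( \int_{-2}^{2} \tilde{p}(t,x,y_1)\,dy_1 \right) \left( \int_{-2}^{2} \tilde{p}(t,0,y_2)\,dy_2 \right) \leq \int_{-2}^{2} \tilde{p}(t,x,y_1)\,dy_1.
\end{equation*}

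For the remaining integral I would invoke the well--known estimate $\tilde{p}(t,u,v) \leq c_\alpha\, t\, |u-v|^{-1-\alpha}$, valid for all $t>0$ and $u \neq v$; it follows at once from the scaling identity $\tilde{p}(t,u,v) = t^{-1/\alpha} g_1\left(t^{-1/\alpha}(u-v)\right)$, where $g_1(z) = \tilde{p}(1,0,z)$ satisfies $g_1(z) \leq c_\alpha |z|^{-1-\alpha}$ (a consequence of boundedness of $g_1$ and its $|z|^{-1-\alpha}$ decay at infinity). For $y_1 \in (-2,2)$ and $x \geq 3$ we have $|x - y_1| = x - y_1 \geq x - 2 \geq x/3$, hence $\tilde{p}(t,x,y_1) \leq c_\alpha\, 3^{1+\alpha}\, t\, x^{-1-\alpha}$, and integrating over the interval $(-2,2)$ yields
\begin{equation*}
T_t \mathds{1}_D(xe_1) \leq 4 \cdot 3^{1+\alpha} c_\alpha\, t\, x^{-1-\alpha},
\end{equation*}
which is the claimed inequality with $c = 4\cdot 3^{1+\alpha} c_\alpha$.

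The argument is essentially obstacle--free: the only external ingredient is the standard off--diagonal upper bound for the one--dimensional stable transition density. One point worth recording is that the resulting constant $c$ does not depend on $t$, so for large $t$ the bound is weaker than the trivial estimate $T_t\mathds{1}_D \leq 1$; only the small--$t$ regime is informative, but the stated inequality holds for every $t>0$.
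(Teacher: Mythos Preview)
Your proof is correct and follows essentially the same route as the paper: drop the Feynman--Kac weight, factorize the transition density, bound the $y_2$-integral by $1$ via $\int_\R \tilde p(t,0,y_2)\,dy_2=1$, and apply the standard one-dimensional estimate $\tilde p(t,u,v)\le c\,t\,|u-v|^{-1-\alpha}$ together with $|x-y_1|\ge x-2\ge x/3$ for $x\ge 3$. Even the resulting constant $4\cdot 3^{1+\alpha}c_\alpha$ matches the paper's.
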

\begin{proof}
Recall that for any $s >0$, $y = (y_1,y_2) \in \R^2$, $z = (z_1,z_2) \in \R^2$ we have $p(s,y,z) = \tilde{p}(s,y_1,z_1) \tilde{p}(s,y_2,z_2)$. It is well known (see e.g. \cite{BG1960} or \cite{BGR2010}) that for any $s > 0$, $y_1, z_1 \in \R$ we have $\tilde{p}(s,y_1,z_1) \le c_1 s |y_1 - z_1|^{-1 - \alpha}$.

For any $x \in \R$ we get
\begin{equation}
T_t \mathds{1}_D(xe_1) = E^{x e_1} \left( X_t \in D, e^{-\int_0^t V(X_s) \, ds} \right)
\label{pp}
\le \int_{-2}^2 \int_{-2}^2 \tilde{p}(t,x,y_1) \tilde{p}(t,0,y_2) \, dy_1 \, dy_2.
\end{equation}
Hence for $x \ge 3$ the right-hand side of (\ref{pp}) is bounded from above by
$$
\int_{-2}^2 \frac{c_1 t \, dy_1}{|x - y_1|^{1+\alpha}} \int_{-\infty}^{\infty} \tilde{p}(t,0,y_2) \, dy_2
\le \frac{4 c_1 t}{(x - 2)^{1+\alpha}}
\le \frac{3^{1+\alpha} 4 c_1 t}{x^{1+\alpha}}.
$$
\end{proof}

\begin{proof}[proof of Theorem \ref{main}] Put $D= (-2, 2) \times (-2, 2)$.
First, we show that if \eqref{warunek_logarytmu} is not satisfied, then the semigroup is not intrinsically ultracontractive. If \eqref{warunek_logarytmu} does not hold, then there exist a constant $M > 0$ and a sequence $(x_n)$ such that $\displaystyle \lim_{n \to \infty} x_n = \infty$ and for any $n \in \N$ we have 
\begin{equation*}
x_n \ge 3, \quad \quad \frac{q(x_n)}{\ln x_n} \le M.
\end{equation*}

By Assumptions (A)  for any $x \ge 0$ we have $q(x+1) \leq C_0 q(x) + C_0$, so for any $t > 0$ and $n \in \N$ we get
\begin{eqnarray}
\nonumber
T_t \mathds{1}_{B(x_n e_1,1)} (x_n e_1) &=& E^{x_n e_1} \left( X_t \in B(x_n e_1,1), e^{-\int_0^t V(X_s) \, ds} \right) \\
\nonumber
&\geq& E^{x_n e_1} \left( \tau_{B(x_n e_1, 1)} > t, e^{-\int_0^t q(x_n + 1) \, ds} \right)\\
\nonumber
&\geq& P^{x_n e_1} \left(\tau_{B(x_n e_1,1)} > t \right) e^{-t C_0} e^{-t C_0 q(x_n)} \\
\nonumber
&\geq& P^0 \left( \tau_{B(0,1)} >t \right) e^{-t C_0} e^{-t C_0 M \ln x_n}\\
\label{TtB}
&=& c_1(t) x_n^{-t C_0 M}.
\end{eqnarray}
Using Lemma \ref{TtD} and (\ref{TtB}) we obtain that for any $t > 0$ 
$$
\frac{T_t \mathds{1}_{B(x_n e_1,1)} (x_n e_1)}{T_t \mathds{1}_D(x_n e_1)} 
\ge \frac{c_1(t) x_n^{-t C_0 M}}{c t x_n^{-1-\alpha}}
= \frac{c_1(t)}{c t} x_n^{1 + \alpha -t C_0 M}
$$
Choose $t> 0$ small enough so that $1 + \alpha - t C_0 M > 0$. Recall that $\displaystyle \lim_{n \to \infty} x_n = \infty$. It follows that Condition \ref{warunek_IU2} is not satisfied, so the semigroup $\{T_t: \, t \ge 0 \}$ is not intrinsically ultracontractive.

Now, assume that \eqref{warunek_logarytmu} is satisfied. Fix $t > 0$. For any $x \in \R^2$ by Lemmas \ref{lemat_z_dolu} and \ref{lemat_z_gory} we obtain
\begin{eqnarray*}
T_t \mathds{1}_{\mathbb{R}^2} (x) &\leq& \frac{C_{9}(t, N)}{(|x_1| + 1)^{1+\alpha} (|x_2| + 1)^{1+\alpha} 
(q(\min (|x_1|, |x_2|)) + 1) (q(|x|) + 1)} \\
&\leq& \frac{C_{9}(t, N)}{C_3(t)} T_t \mathds{1}_D (x).
\end{eqnarray*}
Thus Condition \ref{warunek_IU} is satisfied. So, the semigroup $\{T_t: \, t \ge 0\}$ is intrinsically ultracontractive.

Now let us estimate the first eigenfunction. Recall that $t > 0$ is fixed. Since $\varphi_1$ is bounded, for any $x \in \R^2$ we get $\varphi_1 (x) = e^{\lambda_1 t} T_t \varphi_1 (x) \leq e^{\lambda_1 t} T_t (\|\varphi_1\|_{\infty} \mathds{1}_{\mathbb{R}^2}) (x)$.  By Lemma \ref{lemat_z_gory} we obtain for any $x \in \R^2$
\begin{equation*}
\varphi_1 (x) \le 
\frac{\|\varphi_1\|_{\infty} e^{\lambda_1 t} C_{9}(t,N)}{(|x_1| + 1)^{1+\alpha} (|x_2| + 1)^{1+\alpha} 
(q(\min(|x_1|, |x_2|)) + 1) (q(|x|) + 1)}.
\end{equation*}

Lower bound estimates are similar. The function $\varphi_1$ is continuous and positive on $\overline{D}$, so there exists a constant $c_2 > 0$, such that $\varphi_1 (x) > c_2$ for any $x \in \overline{D}$. Hence, for any $x \in \R^2$ we get $\varphi_1 (x) = e^{\lambda_1 t} T_t \varphi_1 (x) \geq e^{\lambda_1 t} T_t (c_2 \mathds{1}_D) (x)$. From Lemma \ref{lemat_z_dolu}, we obtain for any $x \in \R^2$
$$
\varphi_1 (x) 
\geq \frac{c_2 e^{\lambda_1 t} C_3(t)}{(|x_1| + 1)^{1+\alpha} (|x_2| + 1)^{1+\alpha} (q(\min(|x_1|, |x_2|)) + 1) (q(|x|) + 1)}.
$$
\end{proof}

\end{document}